\newcommand{\bres}{\tilde{b}^{\textit{res}}}
\newcommand{\E}{\mathbb{E}}
\renewcommand{\P}{\mathbb{P}}
\newcommand{\btilde}{\tilde{b}}
\newcommand{\Btilde}{\widetilde{B}}
\newcommand{\ii}{i=1,\dots,N}
\newcommand{\jj}{j=1,\dots,N}
\newcommand{\equaldist}{\,{\buildrel d \over =}\,}
\newcommand{\limdist}{\,{\buildrel d \over \rightarrow}\,}
\newcommand{\V}{\textrm{Var}}
\newcommand{\ul}{\underline}
\newtheorem{theorem}{Theorem}[section]
\newtheorem{property}[theorem]{Property}
\newtheorem{lemma}[theorem]{Lemma}
\newtheorem{definition}[theorem]{Definition}
\newtheorem{remark}[theorem]{Remark}
\newtheorem{corollary}[theorem]{Corollary}
\newtheorem{conjecture}[theorem]{Conjecture}
\newenvironment{proof}{\par\noindent\textbf{\upshape Proof:}}{\hfill$\square$\par}
\numberwithin{equation}{section}
\providecommand{\href}[2]{#2}
\title{Heavy traffic analysis of roving server networks}
\author{M.A.A. Boon\footnote{Eurandom and Department of Mathematics and Computer Science, Eindhoven University of Technology, P.O. Box 513, 5600MB Eindhoven, The Netherlands. Email: m.a.a.boon@tue.nl.} \and R.D. van der Mei\ \footnote{Department of Mathematics, Section Stochastics, VU University, De Boelelaan 1081a, 1081HV Amsterdam, The Netherlands and Centre for Mathematics and Computer Science (CWI), 1098 SJ Amsterdam, The Netherlands. Email: \href{mailto:mei@cwi.nl}{mei@cwi.nl}.} \and E.M.M. Winands \footnote{University of Amsterdam, Korteweg-de Vries Institute for Mathematics, Science Park 904, 1098 XH  Amsterdam, The Netherlands. Email: \href{mailto:e.m.m.winands@uva.nl}{e.m.m.winands@uva.nl}.}}
\begin{document}
\maketitle

\begin{abstract}
This paper studies the heavy-traffic (HT) behaviour of queueing networks with a single roving server. External customers arrive at the queues according to independent renewal processes and after completing service, a customer either leaves the system or is routed to another queue. This type of customer routing in queueing networks arises very naturally in many application areas (in production systems, computer- and communication networks, maintenance, etc.). In these networks, the single most important characteristic of the system performance is oftentimes the path time, i.e. the total time spent in the system by an arbitrary customer traversing a specific path. The current paper presents the first HT asymptotic for the path-time distribution in queueing networks with a roving server under general renewal arrivals. In particular, we provide a strong conjecture for the system's behaviour under HT extending the conjecture of Coffman et al. \cite{coffman95,coffman98} to the roving server setting of the current paper. By combining this result with novel light-traffic asymptotics we derive an approximation of the mean path-time for arbitrary values of the load and renewal arrivals. This approximation is not only highly accurate for a wide range of parameter settings, but is also exact in various limiting cases.

\bigskip\noindent\textbf{Keywords:} queueing network,  path times, waiting times, heavy traffic, approximation

\bigskip\noindent\textbf{Mathematics Subject Classification:}  60K25, 90B22
\end{abstract}

\section{Introduction}

This paper considers heavy-traffic (HT) limits for queueing networks with  a single roving server that visits the queues in a cyclic order according to the gated and exhaustive service. Customers from the outside arrive at the queues according to general renewal processes, and the service time and switch-over time distributions are general as well. After receiving service at queue $i$, a customer is either routed to queue $j$ with probability $p_{i,j}$, or leaves the system with probability $p_{i,0}$. This model can be seen as an extension of the classical polling model (in which customers always leave the system upon completion of their service) by customer routing.

The vast majority of the polling literature assumes that the arrival process at each queue follows a Poisson process (see, for example, \cite{boonapplications2011,Grillo1,levy1,takagi3} for overviews of polling systems and their applications). In many applications, however, the interarrival times are not exponentially distributed. Therefore, in this paper we study a network in which the arrival process at each of the queues is a general renewal process. For open networks an important characteristic of the system performance is the path time, defined as the total time spent in the system by an arbitrary customer traversing a specific path. That is, oftentimes service level agreements are made on the total time required for all the service requests of a customer in a system to be finished. Moreover, in many computer-communication and production-inventory systems the single most important performance measure is often not an aggregate measure like the mean path time, rather the probability that this path time exceeds a pre-defined threshold. In view of dimensioning such systems, the importance of the path time distribution as a performance measure of interest is evident. Due to the routing of customers, which leads to non-renewal arrival processes at the queues and to strong interdependence of the waiting times within a path time, simulation appears to be the only practical recourse at the present time. In these circumstances, one naturally resorts to asymptotic estimates. In particular, in this paper we study the path-time distribution in a queueing network with customer routing under HT conditions.

The motivation for studying the HT regime - which is also the most challenging regime from a scheduling point of view - is twofold. First, an attractive feature of HT asymptotics is that in many cases they lead to strikingly simple expressions for the performance measures of interest. This remarkable simplicity of the HT asymptotics leads to structural insights into the dependence of the performance measures on the system parameters and gives fundamental insights in the behaviour of the system in general. A second appealing feature of HT asymptotics is that they form an excellent basis for developing simple accurate approximations for the performance measures %(distributions, moments, tail probabilities)
for stable systems.

The introduced queueing network is very general, which is illustrated by the fact that many special cases have been studied in the past. Some special case configurations are standard polling systems \cite{takagi3}, tandem queues \cite{nair,taube}, multi-stage queueing models with parallel queues \cite{katayama}, feedback vacation queues \cite{boxmayechiali97, takine}, symmetric feedback polling systems \cite{takagifeedback,takine}, systems with a waiting room \cite{alineuts84,takacsfeedback77}, and many others. Due to the intrinsic complexity of the model, previous studies on the network in its full generality were restricted to queue lengths and waiting time distributions for stable systems under the assumption of Poisson arrivals (see \cite{boonvdmeiwinandsRovingQuesta,sarkar,sidi1,sidi2}).  Although the results in these papers are exact they lack an explicit analysis with simple  expressions, leading to the need of using numerical techniques to determine performance measures of interest. Moreover, these results are limited to the waiting-time distribution instead of the practically most interesting and theoretically most challenging path-time distribution.

Besides that we have a theoretical interest in the proposed queueing network, the present work is motivated by the fact that customer routing in polling systems arises naturally in a host of application areas (in production systems, computer- and communication networks, maintenance, etc.). Some examples are a manufacturing system where products undergo service in a number of stages or in the context of rework \cite{Grasman1}, a Ferry based Wireless Local Area Network (FWLAN) in which nodes can communicate with each other or with the outer world via a message ferry \cite{kavitha}, a dynamic order picking system where the order picker drops off the picked items at the depot where sorting of the items is performed \cite{gongdekoster08}, and an internal mail delivery system where a clerk continuously makes rounds within the offices to pick up, sort and deliver mail \cite{sarkar}.

Motivated by the attractiveness of HT asymptotics, several approaches have been proposed to obtain HT limits for polling systems. HT limits have been rigorously proven for systems with Poisson arrivals (cf. \cite{olsenvdmei03, RvdM_QUESTA}) and renewal arrivals (cf. \cite{coffman95,coffman98, jennings2010,vdmeiwinands08}. A central role in these papers is the Heavy-Traffic Averaging Principle (HTAP) which means that the total scaled workload may be considered as a constant during a cycle, whereas the workload of the individual queues change much faster according to deterministic trajectories, or a fluid model.  In this paper, we also follow this well established path of deriving HT limits for systems with general renewal arrivals, see also \cite{Markowitz2001,Markowitz2000,reimanwein98,reimanwein99,reimanrubiowein99}.

One of the main contributions of the current paper is that we present the first HT asymptotic for the path-time distribution in queueing networks with a roving server under general renewal arrivals. The main building blocks of this path-time distribution are inevitably the waiting-time distributions at the individual queues, for which the current paper also presents the first exact HT asymptotics\footnote{Some preliminary results restricted to mean waiting times were derived in \cite{boonvdmeiwinandsRovingPER2011}.}. In particular, we provide a strong conjecture for the systems behaviour under HT extending the polling conjecture of Coffman et al. \cite{coffman95,coffman98} to the roving server setting of the current paper. Our conjecture is validated in three ways. Firstly, as stated before we follow a well established and accepted line of thinking. Secondly, our conjecture corresponds to the aforementioned rigorously proven distributional limits in special cases of our network. Thirdly, we give some numerical examples that illustrate that the correct limiting behaviour has been derived. As an important by-product of our analytical framework, we obtain exact  asymptotics in the large switch-over time regime as well.

The second main contribution concerns the derivation of a simple approximation of the mean path-time for arbitrary values of the load and renewal arrivals by combining the HT results with newly derived light-traffic (LT) asymptotics. This approximation technique is shown to be exact in various limiting cases, and is known to be highly accurate for a wide range of parameter settings (\cite{boonapprox2011,dorsman2011}).  Moreover, it satisfies the Pseudo Conservation Law (PCL), and consequently it leads to exact closed-form results of the mean waiting time for symmetric systems with Poisson arrivals. The resulting  expressions are very insightful, simple to implement, and suitable for optimization purposes. In particular, the approximation shows explicitly how the path times depend on the system parameters such as the routing probabilities $p_{i,j}$.

Our presentation of the analysis, and the analysis itself, may be considered to be somewhat informal throughout. Providing a rigorous presentation of our results, however, would be an extremely interesting, but notoriously difficult area for further research. Furthermore, it would take us far afield from our main goal, i.e. to obtain fundamental insights into customer routing in systems. Lastly, we would like to stress that the current paper concerns a continuous-time cyclic system with gated or exhaustive service in each queue, but that all results can be extended to discrete time, to periodic polling, to batch arrivals, or to systems with different branching-type service disciplines such as globally gated service.

The structure of the present paper is as follows. In Section \ref{sect:model} we introduce the model and the required notation. In Section \ref{sect:htap} we use the HTAP to derive exact queue-length, waiting-time, and path-time asymptotics for networks with gated and/or exhaustive service. Based on these results we develop novel approximations in Section \ref{sect:approx}. In the penultimate section, we present some practical cases that illustrate the versatility of the queueing network  and the importance of the path-time distribution in practice. Finally, we present some conclusions and points for discussion in the last section.

\section{Model and notation}\label{sect:model}

In this paper we consider a queueing network consisting of $N\geq2$ infinite buffer queues $Q_1,\dots,Q_N$. External customers arrive at $Q_i$ according to a general renewal arrival process with rate $\lambda_i$, and have a generally distributed service requirement $B_i$ at $Q_i$, with mean value $b_i := \E[B_i]$, and second moment $b_i^{(2)} := \E[B_i^2]$. Throughout this paper we assume that all random variables have finite second moments. The queues are served by a single server in cyclic order. Whenever the server switches from $Q_i$ to $Q_{i+1}$, a switch-over time $R_i$ is incurred, with mean $r_i$. The cycle time $C_i$ is the time between successive moments when the server arrives at $Q_i$. The total switch-over time in a cycle is denoted by $R=\sum_{i=1}^N R_i$ %(which has the same distribution for all types),
and its first two moments are $r:=\E[R]$ and $r^{(2)}:=\E[R^2]$. Indices throughout the paper are modulo $N$, so $Q_{N+1}$ actually refers to $Q_1$.
All service times and switch-over times are mutually independent. Each queue receives exhaustive or gated service. Exhaustive service means that each queue is served until no customers are present anymore, whereas gated service means that only those customers present at the server's arrival at $Q_i$ will be served before the server switches to the next queue. This queueing network can be modelled as a \emph{polling system} with the specific feature that it allows for routing of the customers: upon completion of service at $Q_i$, a customer is either routed to $Q_j$ with probability $p_{i,j}$, or leaves the system with probability $p_{i,0}$. Note that $\sum_{j=0}^N~p_{i,j}=1$ for all $i$, and that the transition of a customer from $Q_i$ to $Q_j$ takes no time. The model under consideration has a branching structure, which is discussed in more detail by Resing \cite{resing93}. The total arrival rate at $Q_i$ is denoted by $\gamma_i$, which is the unique solution of the following set of linear equations: $\gamma_i = \lambda_i + \sum_{j=1}^N \gamma_j p_{j,i}, \ii.$
%\begin{equation*}
%\gamma_i = \lambda_i + \sum_{j=1}^N \gamma_j p_{j,i},\qquad\ii.
%\end{equation*}
The offered load to $Q_i$ is $\rho_i:=\gamma_i b_i$ and the total utilisation is $\rho:=\sum_{i=1}^N \rho_i$. We assume that the system is stable, which means that $\rho$ should be less than one (see \cite{sidi2}). The total service time of a customer is the total amount of service given during the presence
of the customer in the network, denoted by $\Btilde_i$, and its first two moments by
$\btilde_i := \E[\Btilde_i]$ and $\btilde_i^{(2)} := \E[\Btilde_i^2]$.
The first two moments are uniquely determined by the following set of linear equations: For $\ii$,
\begin{eqnarray}\label{eq02}
\btilde_i &=& b_i + \sum_{j=1}^N \btilde_j p_{i,j},\\
\btilde_i^{(2)} &=& b_i^{(2)} + 2b_i\sum_{j=1}^N \btilde_j p_{i,j}+ \sum_{j=1}^N \btilde_j^{(2)} p_{i,j}.
\end{eqnarray}
%$\btilde_i = b_i + \sum_{j=1}^N \btilde_j p_{i,j}$,
%and
%$\btilde_i^{(2)} = b_i^{(2)} + 2b_i\sum_{j=1}^N \btilde_j p_{i,j}+ \sum_{j=1}^N \btilde_j^{(2)} p_{i,j}$.

We study this model under heavy-traffic conditions, i.e., we increase the load of the system until it reaches the point of saturation, $\rho\uparrow1$. As the total load of the system increases, the visit times, cycle times, and waiting times become larger and will eventually grow to infinity. For this reason, we scale them appropriately and consider the scaled versions. We consider several variables as a function of the load $\rho$ in the system. For each variable $x$ that is a function of $\rho$, we denote its value {\it evaluated at} $\rho=1$ by $\hat{x}$. Scaling is done by varying the interarrival times of the external customers. To be precise, the limit is taken such that the external arrival rates $\lambda_1,\ldots,\lambda_N$ are increased, while keeping the service and switch-over time distributions, the routing probabilities and the {\it ratios} between these arrival rates fixed. For $\rho = 1$, the generic interarrival time of the stream in $Q_i$ is denoted by $\hat{A}_i$. Reducing the load $\rho$ is done by scaling the interarrival times, i.e., taking the random variable $A_i := \hat{A}_i/\rho$ as generic interarrival time at $Q_i$. Hence, the rate of the arrival stream at $Q_i$ satisfies $\lambda_i = 1/\E[A_i]$.
Furthermore, we define arrival rates $\hat\lambda_i = 1/\E[\hat{A}_i]$, and proportional load at $Q_i$, $\hat\rho_i = {\rho_i}/{\rho}$ (``proportional'' because $\sum_{i=1}^N \hat\rho_i = 1$).

\section{HT asymptotics}\label{sect:htap}

To obtain HT-results for the waiting-time distributions, we use HT results for polling systems without customer routing, which are obtained by \cite{coffman95,coffman98,jennings2010}. The key observation in these papers is the occurrence of a so-called Heavy Traffic Averaging Principle (HTAP). When a polling system becomes saturated, two limiting processes take place. Let $V$ denote the total workload of the system, i.e., the total service requirement of all customers present in the system including the possible residual service time of a customer being served. As the load offered to the system, $\rho$, tends to~1, the scaled total workload $(1-\rho)V$ tends to a Bessel-type diffusion. However, the work \emph{in each queue} is emptied and refilled at a faster rate than the rate at which the total workload is changing. This implies that during the course of a cycle, the total workload can be considered as constant, while the workloads of the individual queues fluctuate according to a fluid model. The HTAP relates these two limiting processes.

Although rigorous proofs have only been presented for standard polling models (see \cite{coffman95,coffman98,jennings2010}), results in \cite{Markowitz2001,Markowitz2000,reimanwein98,reimanwein99,reimanrubiowein99} support the conjecture that the HTAP holds for a much wider class of systems.  As in these papers, we make the crucial assumption that the HTAP holds without providing a rigorous proof of convergence. That is, the HTAP occurs due to a time scale decomposition that is inherent in the heavy traffic scaling.  Therefore, in HT the multi-dimensional individual workload processes move along a path in the constant workload hyperplane. For systems without routing and switch-over times, this path is described in detail by Jennings \cite{jennings2010}. Using the results from this section, this can easily be adapted to our model with customer routing. Furthermore, it is known that the scaled total workload tends to a Bessel-type limit as the system becomes saturated. More colloquially, the routing of customers impacts the individual workloads, which impels us to considerably modify and extend the HT analysis of polling models, but does not affect the time scale decomposition which directly implies that the HTAP should also hold in the current setting.

We provide justification for this conjecture in four ways. Firstly, we follow a well-founded line of thinking. Secondly, our conjecture corresponds to the rigorously proven distributional limit in the special case of a two-queue polling model and branching-type polling models with Poisson arrivals. Thirdly, in the next section we present numerical examples supporting the conjecture that the correct limiting behaviour has been derived. Finally, in the appendix, we provide a theorem and proof for the limiting queue-length distributions under the assumption of Poisson arrivals. In the next subsection we start by discussing the fluid model and subsequently discuss the limiting distribution of the scaled total workload. We use these results to obtain the HT limit of the scaled waiting time and path-time distributions.

\subsection{Fluid model: Gated service}

\paragraph{Workload.}

In this section we consider the fluid version of the queueing network with a single shared server, where the work travels as fluid from one station to another. This fluid model is carefully constructed in such a way that its behaviour corresponds to the multi-dimensional individual workload processes, moving along a path in the constant workload hyperplane as discussed in the introduction of this section. An important aspect of this ``corresponding fluid model'' is the absence of switch-over times, as they become negligible under HT conditions. We start by studying the fluid limit of the per-queue workload, which is obtained by multiplying by $(1-\rho)$ and letting $\rho \uparrow 1$. For our model, the fluid limit of the workload at $Q_i$ is a piecewise linear function. During the visit time of $Q_k$, denoted by $V_k$, $k=1,\dots,N$, \emph{external} fluid particles, corresponding to the customers in the original model, flow into $Q_i$ at rate $\hat\lambda_i$. Each of these fluid particles brings along $\btilde_i$ units of work into the system. Simultaneously, work is being processed in $Q_k$ at rate one. Since $\sum_{i=1}^N\hat\lambda_i\btilde_i=1$, the \emph{total} workload remains constant throughout the course of a cycle. In steady state, the length of one cycle is fixed and denoted by $c$. In this paragraph we will show that there is a simple linear relation between $c$ and the total workload in the system, denoted by $v$. For this reason, we may regard $c$ as a system parameter in the fluid model, instead of $v$.

\begin{figure}[ht]
\begin{center}
\includegraphics[width=0.7\linewidth]{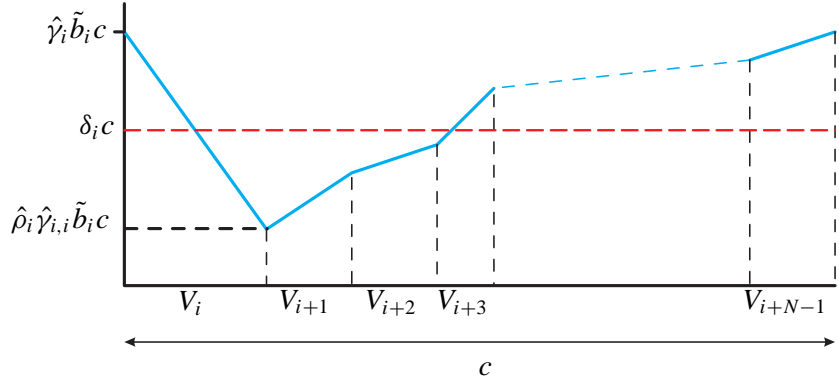}
\end{center}
\caption{Mean amount of work in $Q_i$ in the fluid limit that arises when the system is in heavy traffic. The length of one cycle is $c$.}
\label{fluidHT}
\end{figure}

Although work is processed at rate one, due to the internal routing work is flowing out of $Q_k$ at rate
\[
1+\frac{1}{b_k}\sum_{i=1}^N p_{k,i}\btilde_i = \frac{\btilde_k}{b_k},
\]
which is greater than (or equal to) one. The reason for this anomaly is that work decreases in $Q_k$ either because of the service of fluid particles (customers) in this queue, or because work is shifted due to internal routing of fluid. Work \emph{including} rerouted fluid particles is flowing into $Q_i$, during $V_k$, at rate $\hat\gamma_{i,k}\btilde_i$, where
%\[
%\hat\gamma_{i,k}:=\hat\lambda_i+p_{k,i}/{b_k}, \qquad i,k=1,\dots,N.
%\]
$\hat\gamma_{i,k}:=\hat\lambda_i+p_{k,i}/{b_k}$, for $i,k=1,\dots,N$.
It is straightforward to verify that ${\btilde_k}/{b_k} = \sum_{i=1}^N \hat\gamma_{i,k}\btilde_i$. Figure \ref{fluidHT} depicts a graphical representation of the mean amount of work in $Q_i$ in the fluid limit throughout the course of a cycle, the length of which is a constant, denoted by $c$.
Using the fact that fluid particles (external and internal) flow into $Q_i$ during $V_k$ at rate $\hat\gamma_{i,k}$ and the fact that the length of $V_k$ in the fluid model is equal to $\hat\rho_kc$, one can show that the fluid limit of the mean amount of work in $Q_i$ at the beginning of  a visit to $Q_j$ is
\begin{equation}
\sum_{k=i}^{j-1} \hat\rho_k \hat\gamma_{i,k}\btilde_i c \qquad\text{for }j=i+1,\dots,i+N.
\label{fluidworkatvisitbeginnings}
 \end{equation}
This reduces to $\hat{\gamma}_i \btilde_i c$ for $j=i+N$.
We have used that in the fluid limit the fraction of time that the server is visiting $Q_j$ is $\hat{\rho}_j~(\jj)$.
Combining these observations, one can obtain the following expression for $\delta_i$, defined as the ratio of the fluid limit of the average amount of work at $Q_i$ and the length of a cycle (see Figure \ref{fluidHT}).
\begin{definition}
For $\ii$,
\begin{equation}
\delta_i
=
\frac{1}{2} \hat{\rho}_i \btilde_i ( \hat{\gamma}_i  + \hat{\rho}_i \hat{\gamma}_{i,i})
+
\sum_{j=i+1}^{i+N-1} \hat{\rho}_j
\left(
\frac{1}{2} \hat{\rho}_j \btilde_i\hat\gamma_{i,j}
+
\sum_{k=i}^{j-1} \hat{\rho}_k \btilde_i \hat\gamma_{i,k}
\right).\label{deltai}
\end{equation}\label{deltalemma}
\end{definition}
We introduce the notation $v_i$ for the average amount of work in $Q_i$,
\[
v_i=\delta_i c.
\]
As the \emph{total} inflow in all queues is equal to the total outflow per time unit, the total amount of work during a cycle remains constant at level
\[
v=\sum_{i=1}^N v_i=\sum_{i=1}^N\delta_i c=\delta c,
\]
%where $\delta$ is defined as
%\begin{equation}
%\delta = \sum_{i=1}^N\delta_i.\label{delta}
%\end{equation}
where $\delta = \sum_{i=1}^N\delta_i$.

\paragraph{Amount of fluid.} We first study the number of fluid particles in each queue at various epochs during  the cycle. We denote by $X_{i,k}^{\textit{fluid}}$ the number of fluid particles in $Q_i$ at the \emph{beginning} of a visit to $Q_k$. Using \eqref{fluidworkatvisitbeginnings} and the fact that the amount of fluid in $Q_i$ is equal to the amount of work in $Q_i$ divided by $\btilde_i$, we obtain
\[
X_{i,k}^\textit{fluid}=\sum_{j=i}^{k-1} \hat\rho_j \hat\gamma_{i,j} c \qquad\text{for }k=i+1,\dots,i+N.
\]
Again, note that $X_{i,i+N}^\textit{fluid}$ can also be written as
\[
X_{i,i}^\textit{fluid}=\hat\gamma_i c.
\]
As a consequence, the amount of fluid at an arbitrary moment during $V_k$, denoted by $L^\textit{fluid}_{i,k}$, is uniformly distributed on the interval
\begin{equation}
\begin{array}{ll}
\left[
\sum_{j=i}^{k-1} \hat\rho_j \hat\gamma_{i,j} c,
\sum_{j=i}^{k} \hat\rho_j \hat\gamma_{i,j} c
\right]&\qquad \text{for }k=i+1,\dots,i+N-1,\\[2ex]
\left[
\hat\rho_i \hat\gamma_{i,i} c,
\hat\gamma_{i} c
\right]&\qquad  \text{for }k=i.
\end{array}
\label{LikDist}
\end{equation}
We can now obtain an expression for the distribution of the amount of fluid in $Q_i$ at an arbitrary epoch, by conditioning on the visit period:
\begin{align}
L^\textit{fluid}_{i} &\equaldist
L^\textit{fluid}_{i,k}
\qquad \textrm{ w.p. }\hat\rho_{k}, \label{LiFluid}
\end{align}
where $L^\textit{fluid}_{i,k}$ is uniformly distributed as in \eqref{LikDist}.
For notational reasons, we introduce the notation $\mathcal{L}^\textit{fluid}_{i}:=L^\textit{fluid}_{i}/c$ and $\mathcal{L}^\textit{fluid}_{i,k}:=L^\textit{fluid}_{i,k}/c$. One can consider $\mathcal{L}^\textit{fluid}_{i}$ as a standardised version of $L^\textit{fluid}_{i}$, not depending on the cycle length $c$.

\paragraph{Waiting times.}

For the fluid model under consideration we are interested in the waiting time distribution of an arbitrary fluid particle, internal or external. We define the waiting time as the time between the arrival in a queue, and the moment of departure from this queue (even if the particle is routed to another, or even the same queue). If we condition on the event that an arbitrary fluid particle arrives in $Q_i$ during $V_k$, its conditional waiting time consists of the residual part of $V_k$, the visit periods $V_{k+1}, \dots, V_{i-1}$, and the processing of the amount of fluid that has arrived in $Q_i$ during the elapsed part of the cycle, i.e., $V_i,\dots,V_{k-1}$ plus the elapsed part of $V_k$. Let $u_k$ be the fraction of $V_k$ that has elapsed at the arrival epoch of a fluid particle in $Q_i$. We denote by $W_{i,k}^{\textit{fluid}}(u_k)$, for $0\leq u_k \leq 1$, the conditional waiting time of an arbitrary fluid particle arriving in $Q_i$ during $V_k$, at the moment that a fraction $u_k$ of this visit period has elapsed. We have
\begin{equation}
W_{i,k}^{\textit{fluid}}(u_k) = \underbrace{\sum_{j=i-N}^{k-1}\hat\rho_jc\hat\gamma_{i,j}b_i+u_k\hat\rho_kc\hat\gamma_{i,k}b_i}_{\mathcal{P}_{i,k}(u_k)} + \underbrace{(1-u_k)\hat\rho_k c+\sum_{j=k+1}^{i-1}\hat\rho_j c}_{\mathcal{R}_{i,k}(u_k)},
\label{wikfluid}
\end{equation}
for $i=1,\dots,N$, $k=i-N,\dots,i-1$, and $0\leq u_k \leq 1$. As \eqref{wikfluid} indicates, we split $W_{i,k}^{\textit{fluid}}(u_k)$ into two parts, namely $\mathcal{P}_{i,k}(u_k)$ representing the waiting time due to the customers that arrived during the elapsed (Past) part of the cycle $c$, and $\mathcal{R}_{i,k}(u_k)$, which is the time until the next visit of the server to $Q_i$ (Residual cycle). This division into two parts turns out to be useful in the next paragraph, for computing the path-time distributions.

The waiting-time distribution follows after unconditioning. During $V_k$ fluid flows into $Q_i$ at rate $\hat\gamma_{i,k}$. Hence, the probability that an arbitrary fluid particle arrives during $V_k$, given that it arrives in $Q_i$, is $\pi_{i,k}:=\hat\gamma_{i,k}\hat\rho_k/\hat\gamma_i$. The elapsed fraction of the visit period $V_k$ is uniformly distributed on $[0,1]$. These two results yield the following expression for the waiting-time distribution of an arbitrary fluid particle in $Q_i$:
\begin{align}
W^\textit{fluid}_{i} &\equaldist
\mathcal{P}_{i,k}(U_k) + \mathcal{R}_{i,k}(U_k)  \qquad\textrm{ w.p. }\pi_{i,k}\nonumber\\
&= c \Big(1+\sum_{j=i-N}^{k-1}\hat\rho_j(\hat\gamma_{i,j}b_i-1)+U_k\hat\rho_k(\hat\gamma_{i,k}b_i-1)\Big)
\qquad \textrm{ w.p. }\pi_{i,k}, \label{WiFluid}
\end{align}
for $i=1,\dots,N$ and $k=i-N,\dots,i-1$. The random variables $U_1, \dots, U_N$ are independent and Uniform$[0,1]$ distributed. As before, we introduce the notation $\mathcal{W}^\textit{fluid}_{i}:=W^\textit{fluid}_{i}/c$ to represent a standardised version of $W^\textit{fluid}_{i}$, not depending on the cycle length $c$.

\paragraph{Path times.}

In this paragraph we derive the path-time (the total time spent in the system) distribution of customers - or, in this case, fluid particles - traversing a specific path through the network.  We denote the time spent in the system by a fluid particle traversing the path $Q_{i_1}, Q_{i_2}, \dots, Q_{i_M}$ by $W^\textit{fluid}_{i_1,i_2,\dots,i_M}$, where $i_k\in\{1,2,\dots,N\}$ for $k=1,2,\dots,M$ and $M \geq 1$. When considering path times, each fluid particle enters the system as an \emph{external} fluid particle in $Q_{i_1}$. Just like in the previous paragraph, we condition on the visit period and the length of the elapsed part of this visit period at its arrival epoch. Assume that a tagged fluid particle arrives in  $Q_{i_1}$ at the moment that the server is visiting $Q_k$ and a fraction $u_k$ of $V_k$ has elapsed $(0\leq u_k \leq 1)$. We denote its conditional path time as $W^\textit{fluid}_{i_1,i_2,\dots,i_M; k}(u_k)$. Its time spent in $Q_{i_1}$ is exactly $W_{{i_1},k}^{\textit{fluid}}(u_k)$. At the moment that the tagged fluid particle leaves $Q_{i_1}$ and is routed to $Q_{i_2}$, the elapsed time of $V_{i_1}$ is $\mathcal{P}_{i_1,k}(u_k)$. As a consequence, the elapsed \emph{fraction} of $V_{i_1}$ is $\mathcal{P}_{i_1,k}(u_k)/(\hat\rho_{i_1}c)$. This result leads to the following recursive equation for the conditional path time,
\[
W^\textit{fluid}_{i_1,i_2,\dots,i_M; k}(u_k) = W_{{i_1},k}^{\textit{fluid}}(u_k) + W^\textit{fluid}_{i_2,\dots,i_M; i_1}\left(\frac{\mathcal{P}_{i_1,k}(u_k)}{\hat\rho_{i_1}c}\right).
\]
We define $W^\textit{fluid}_{i_M; i_{M-1}}\left(\cdot\right) := W^\textit{fluid}_{i_M, i_{M-1}}\left(\cdot\right)$ to ensure that the recursion always ends.

The path-time distribution follows after unconditioning, noting that the probability that an \emph{external} fluid particle enters the system during $V_k$ is $\hat\rho_k$:
\begin{align}
W^\textit{fluid}_{i_1,i_2,\dots,i_M} &\equaldist
W^\textit{fluid}_{i_1,i_2,\dots,i_M; k}(U_k) \qquad\textrm{ w.p. }\hat\rho_{k},
\label{pathtimefluid}
\end{align}
for $i_1,i_2,\dots,i_M \in\{1,2,\dots,N\}$, and $k=1,2,\dots,N$. The random variables $U_1, \dots, U_N$ are independent and Uniform$[0,1]$ distributed.
Note that $W^\textit{fluid}_{i_1,i_2,\dots,i_M}/c$ does not depend on the cycle time $c$ anymore. Similar to the standardised waiting time, we now introduce the notation $\mathcal{W}^\textit{fluid}_{i_1,i_2,\dots,i_M} := W^\textit{fluid}_{i_1,i_2,\dots,i_M}/c$, which turns out to be useful later.

\subsection{Fluid model: Exhaustive service}

In this subsection we briefly discuss the fluid model when some of the queues are served exhaustively. Rather than repeating all of the analysis of the previous subsection, we mainly focus on the differences compared to a model with gated service. Note that changing the service discipline of a particular queue has no effects on any of the other queues.

\paragraph{Workload.}

Assume that the service discipline at a certain queue, say $Q_e$ with $e=1,\dots,N$, is exhaustive service. When comparing the fluid trajectory of the workload of $Q_e$ during the course of a cycle to the case with gated service (as depicted in Figure \ref{fluidHT}), the only change is that the trajectory needs to be moved downwards such that the queue is empty at the end of the visit period. More precisely, if $Q_e$ receives exhaustive service, then we define
\begin{equation}
\delta_e=\delta_e^\textit{gated}-\hat\rho_e\hat\gamma_{e,e}\btilde_e,\label{deltaiexhaustive}
\end{equation}
where $\delta_e^\textit{gated}$ is the value of $\delta_e$ given by \eqref{deltai} for the case that $Q_e$ would have received gated service.

\paragraph{Amount of fluid.} Since the amount of fluid in $Q_i$ is equal to the amount of work in $Q_i$ divided by $\btilde_i$, and since we have just established that the amount of work for an exhaustively served queue $Q_e$ is equal to the amount of work this queue would have under gated service, minus $\hat\rho_e\hat\gamma_{e,e}\btilde_e$, we directly obtain that the amount of fluid at an arbitrary moment during $V_k$, denoted by $L^\textit{fluid}_{e,k}$, is uniformly distributed on the interval
\begin{equation}
\begin{array}{ll}
\left[
\sum_{j=e+1}^{k-1} \hat\rho_j \hat\gamma_{i,j} c,
\sum_{j=e+1}^{k} \hat\rho_j \hat\gamma_{i,j} c
\right]&\qquad \text{for }k=e+1,\dots,e+N-1,\\[2ex]
\left[
0,
(\hat\gamma_{e} - \hat\rho_e\hat\gamma_{e,e})c
\right]&\qquad  \text{for }k=e.
\end{array}
\label{LikDistExh}
\end{equation}

\paragraph{Waiting times.}

Determining the waiting-time distribution of an arbitrary fluid particle also involves the same steps as in the gated case, except for fluid particles arriving in $Q_e$ during $V_e$, obviously.

\begin{equation*}
W_{e,k}^\textit{fluid}(u_k) =
\begin{cases}
\underbrace{(1-u_k)\hat\rho_k c+\sum_{j=k+1}^{e-1}\hat\rho_j c}_{\mathcal{R}_{e,k}(u_k)} + \underbrace{\sum_{j=e-N+1}^{k-1}\hat\rho_j c\hat\gamma_{e,j}b_e+u_k\hat\rho_k c\hat\gamma_{e,k}b_e}_{\mathcal{P}_{e,k}(u_k)}  &\quad \quad (k\neq e),\\
\underbrace{(1-u_e)(\hat\gamma_e c-\hat\gamma_{e,e}\hat\rho_e c)b_e}_{\mathcal{P}_{e,e}(u_e)},& \quad\quad(k=e).
\end{cases}
\end{equation*}
Note that $\mathcal{R}_{e,e}(u_e) = 0$ when the service in $Q_e$ is exhaustive.

\paragraph{Path times.}

We consider the conditional path time $W^\textit{fluid}_{e,i_2,\dots,i_M; k}(u_k)$, where $Q_{e}$ receives exhaustive service. As in the previous paragraph, we need to treat the case $e=k$ separately. A fluid particle arriving in $Q_e$ during $V_e$, given that a fraction $u_e$ of this visit period has elapsed, will wait for $W_{{e},e}^{\textit{fluid}}(u_e)$ before leaving this queue. This waiting time, which can also be denoted as $\mathcal{P}_{e,e}(u_e)$, is a fraction $\frac{\mathcal{P}_{e,e}(u_e)}{\hat\rho_{e}c}$ of the visit period $V_e$. This gives the following expression for the conditional path time:
\[
W^\textit{fluid}_{e,i_2,\dots,i_M; k}(u_k) =
\begin{cases}
W_{{e},k}^{\textit{fluid}}(u_k) + W^\textit{fluid}_{i_2,\dots,i_M; e}\left(\frac{\mathcal{P}_{e,k}(u_k)}{\hat\rho_{e}c}\right),
&\qquad (e \neq k),\\
W_{{e},e}^{\textit{fluid}}(u_e) + W^\textit{fluid}_{i_2,\dots,i_M; e}\left(u_e+\frac{\mathcal{P}_{e,e}(u_e)}{\hat\rho_{e}c}\right),
&\qquad (e = k).
\end{cases}
\]

\subsection{Original model: Gated service}\label{sect:originalmodelgated}

In this section we expand upon the HTAP for polling models by relating the limit processes of the total workload process and the workload of the individual queues. The main difference with polling models is that in the roving server network, (the direction of) the shifting of individual workloads is not only determined by which queue is being served but also by the internal routing of the customers. To this end, we return to the original model under HT conditions.
\paragraph{Workload.}
We denote by $V$ the total amount of work in the system at the start of a cycle starting, without loss of generality, at the beginning of visit period $V_1$. As far as the total amount of work is concerned, the system behaves like a polling system in heavy traffic with external customers bringing in an amount of work $\Btilde_i$ in $Q_i$, but with work shifting from one queue to another upon the service completion of a customer. For polling systems with general renewal arrivals the HT limit of the scaled total amount of work at the beginning of a cycle is conjectured by Olsen and Van der Mei \cite{olsenvdmei05}. %Although this conjecture is widely accepted to be true, it has only been proven for systems consisting of two queues (cf. \cite{coffman95,coffman98}), systems with Poisson arrivals (cf. \cite{olsenvdmei03}), or for the \emph{means} rather than the complete distributions (cf. \cite{vdmeiwinands08}).
An adaptation of the conjecture in \cite{olsenvdmei05}, in accordance with the proof for the case of Poisson arrivals in \cite{olsenvdmei03}, to our model leads to the following result.
\begin{conjecture}\label{conjecturescaledworkHT}
Define
%\begin{align*}
%\sigma^2&=\sum_{i=1}^N \hat\lambda_{i}\left(\V[\Btilde_i]+(\hat\lambda_i \btilde_i)^2\V[\hat A_{i}]\right),\\
%\alpha &= 2r\delta/\sigma^2+1,\\
%\mu &= 2/\sigma^2,
%\end{align*}
\[
\sigma^2=\sum_{i=1}^N \hat\lambda_{i}\left(\V[\Btilde_i]+(\hat\lambda_i \btilde_i)^2\V[\hat A_{i}]\right),\qquad
\alpha = 2r\delta/\sigma^2,\qquad
\mu = 2/\sigma^2,
\]
with $\delta$ as defined in Definition \ref{deltalemma}. Then, for $\rho \uparrow 1$, $(1-\rho)V$ has a Gamma distribution with shape parameter $\alpha$ and rate parameter $\mu$.
\end{conjecture}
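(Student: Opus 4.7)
The plan is to reduce the total-workload dynamics to those of a standard cyclic polling model and then invoke the Olsen--Van der Mei conjecture \cite{olsenvdmei05} (rigorously proven for Poisson arrivals in \cite{olsenvdmei03}), taking care to identify the parameters correctly for the routing setting. The guiding observation is that, as far as the total workload $V$ is concerned, every external arrival at $Q_i$ commits exactly $\Btilde_i$ units of eventual work to the system, and this total is discharged at unit rate whenever the server is busy anywhere in the network; internal routing merely redistributes this work between queues without changing its total. Thus, at the level of total workload, the routing network and a standard cyclic polling model with i.i.d.\ effective service times $\Btilde_i$ at $Q_i$, interarrivals $\hat A_i/\rho$, and switch-over times $R_1,\dots,R_N$ differ only in the timing of the microscopic work increments, and on the $1/(1-\rho)$ time scale of the HT limit these differences wash out.

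With this in mind, I would first appeal to the HTAP in the routing setting exactly as outlined at the start of Section~\ref{sect:htap}: per-queue workloads average out along the fluid trajectories while the total workload evolves slowly along the constant-workload hyperplane. The formula for $\sigma^2$ then drops out of a renewal-reward computation for the variance of cumulative work input per unit time under renewal arrivals with effective service $\Btilde_i$, yielding exactly $\sum_i\hat\lambda_i\bigl(\V[\Btilde_i]+(\hat\lambda_i\btilde_i)^2\V[\hat A_i]\bigr)$. For $\delta$, the role played in the standard polling case by the fluid time-average of the workload per unit cycle length is taken over by Definition~\ref{deltalemma}; the identity $\btilde_k/b_k=\sum_i\hat\gamma_{i,k}\btilde_i$ ensures that the inflow--outflow balance underpinning the Bessel diffusion argument carries over unchanged despite the routing.

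Once the HTAP and the parameter identifications are in place, the Bessel-diffusion argument of \cite{coffman95,coffman98,olsenvdmei03} produces a Gamma distribution with shape $2r\delta/\sigma^2$ and rate $2/\sigma^2$ for $(1-\rho)V$, which is the asserted statement; note that the resulting mean $r\delta$ is consistent with the fluid value $v=\delta c$ multiplied by $(1-\rho)$ since $c\sim r/(1-\rho)$. The main obstacle, and the reason the result is stated as a conjecture rather than a theorem, is the rigorous justification of the HTAP time-scale decomposition in the routing setting: the arrival streams at the individual queues are no longer renewal, because a single external customer generates correlated future arrivals at several queues, placing the analysis outside the framework of \cite{jennings2010}. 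Extending the functional diffusion-limit machinery of that paper to cope with routing-induced non-renewal arrivals is the technically hard step, which the paper explicitly leaves open and supports instead via the Poisson-arrival theorem proved in the appendix together with numerical validation.
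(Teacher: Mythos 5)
Your proposal takes essentially the same route as the paper: the paper likewise observes that at the level of total workload the network behaves as a polling system with effective service requirements $\Btilde_i$ brought in by external arrivals, and then adapts the Olsen--Van der Mei conjecture \cite{olsenvdmei05} (with the Poisson-arrival proof of \cite{olsenvdmei03} and the Bessel limit of \cite{coffman98} as backing), offering no rigorous proof of the HTAP in the routing setting -- exactly the gap you identify. Your consistency check that the mean $\alpha/\mu = r\delta$ matches the fluid value is a small but correct addition beyond what the paper states.
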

%Note that, under HT conditions, the distribution of the scaled total amount of work does not depend on the starting point of the cycle. The same holds for the HT limit of the scaled cycle time distribution.
%The Gamma distribution in Conjecture \ref{conjecturescaledworkHT} appears because it is the limiting distribution of the Bessel diffusion.
For more details we refer to \cite{olsenvdmei05} (who, in turn, refer to a result from \cite{coffman98}).

\paragraph{Cycle time.}
Subsequently, the diffusion limit of the \emph{total} workload process and the workload in the individual queues can be related using the HTAP. To this end, we start with the cycle-time distribution under HT scalings, which follows from Conjecture \ref{conjecturescaledworkHT} and the fluid analysis carried out in the first part of this section. The length of a cycle depends on the amount of work at the beginning of that cycle (which may be any arbitrarily chosen moment). Denote by $C(x)$ the length of a cycle, given that a total amount of $x$ work is present at its beginning. In steady state, we have the following relation
\begin{equation}
\delta C(x) = x ,\label{relationCandV}
\end{equation}
where $\delta$ is defined as in Definition \ref{deltalemma}. Hence, given an amount of work $x$, the cycle time is $C(x)=x/\delta$.
In the fluid model, all cycles have the same length. However, in the stochastic model, the cycle lengths are random. Note that, although the cycle-time distribution depends on the service disciplines and the chosen starting point of the cycle, the \emph{mean} cycle time is always $\E[C_i]=\E[C]=r/(1-\rho)$ (cf. \cite{sidi2}).
%the cycle during which an arbitrary customer arrives, is a so-called \emph{length-biased} cycle. If a random variable $X$ has probability density function $f_X(x)$, then we define the length-biased random variable $\bm{X}$ as a random variable with probability density function
%$
%f_{\bm{X}}(x)=x f_X(x)/\E[X].
%$
%From renewal theory, we know that the length-biased cycle length accounts for the fact that an arbitrary customer arrives with a higher probability during a long cycle, than during a short one. Hence, when relating the waiting times to the cycle times, one should consider the length-biased cycle time. %If a random variable has a Gamma distribution with parameters $a$ and $b$, its length-biased equivalent has a Gamma distribution with parameters $a+1$ and $b$.
We are now ready to formulate the second conjecture, concerning the limiting distribution of the scaled length-biased cycle time.

\begin{conjecture}\label{conjecturescaledcycleHT}
For $\rho \uparrow 1$, we find that $(1-\rho){C_i}$ converges in distribution to a random variable having a Gamma distribution with shape parameter $\alpha$ and rate parameter $\delta\mu$.
\end{conjecture}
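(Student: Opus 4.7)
The plan is to derive Conjecture \ref{conjecturescaledcycleHT} as an immediate consequence of Conjecture \ref{conjecturescaledworkHT} via the deterministic identity $\delta C(x) = x$ established in \eqref{relationCandV}. The HTAP asserts that, in the HT limit, the total workload is essentially constant over the time-scale of a single cycle, so the length of a cycle becomes a deterministic function of the total work present at its start. This is precisely the content of \eqref{relationCandV}: conditional on $V=x$, we have $C(x)=x/\delta$. Accordingly, under the HT scaling, $(1-\rho)C_i$ and $(1-\rho)V/\delta$ should have the same limiting distribution.

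First I would invoke Conjecture \ref{conjecturescaledworkHT} to obtain that $(1-\rho)V$ converges in distribution to a $\Gamma(\alpha,\mu)$ random variable as $\rho \uparrow 1$. By the continuous mapping theorem applied to the linear map $x \mapsto x/\delta$, it then follows that $(1-\rho)C_i$ converges in distribution to $V^\star/\delta$, where $V^\star \sim \Gamma(\alpha,\mu)$. Applying the elementary scaling property of the Gamma distribution, namely that if $X \sim \Gamma(\alpha,\mu)$ then $X/\delta \sim \Gamma(\alpha,\delta\mu)$, yields the stated conclusion. As a sanity check, the mean of the claimed limiting distribution equals $\alpha/(\delta\mu) = r$, which is consistent with the exact identity $(1-\rho)\E[C_i]=r$ noted just before the conjecture.

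The main obstacle is justifying the deterministic reduction $C_i \approx V/\delta$ for the stochastic cycle in the HT limit, particularly when $C_i$ is observed at the server's arrival at an arbitrary queue $Q_i$ rather than at the reference queue $Q_1$ used to define $V$ in Conjecture \ref{conjecturescaledworkHT}. By the HTAP, the workloads at all visit-beginning epochs within the same cycle differ only by $O(1)$ amounts, while the HT-scaled quantities are of order $1/(1-\rho)$; consequently every choice of reference queue produces the same $\Gamma(\alpha,\mu)$ limit for the scaled total workload, and division by the queue-independent constant $\delta$ produces the same $\Gamma(\alpha,\delta\mu)$ limit for every $C_i$. A fully rigorous argument would require making the HTAP precise, which is the overarching open difficulty acknowledged throughout the paper; modulo that, the step from Conjecture \ref{conjecturescaledworkHT} to Conjecture \ref{conjecturescaledcycleHT} is a one-line consequence of \eqref{relationCandV}.
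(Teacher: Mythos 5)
Your proposal matches the paper's own reasoning: the paper derives Conjecture \ref{conjecturescaledcycleHT} precisely by combining Conjecture \ref{conjecturescaledworkHT} with the fluid relation $\delta C(x)=x$ of \eqref{relationCandV} and the Gamma scaling property, exactly as you do (and your mean check $\alpha/(\delta\mu)=r$ is consistent with the paper's remark that $\E[C_i]=r/(1-\rho)$). The residual issue you flag — that the reduction $C_i\approx V/\delta$ rests on the unproven HTAP — is the same caveat the paper itself accepts, which is why the statement is presented as a conjecture rather than a theorem.
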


\paragraph{Queue lengths.} Given the cycle-time distribution, we can finally find the scaled queue-length distributions under HT conditions.
We use the fluid analysis, in combination with the conjectured cycle time distribution, to find the limiting distribution of the scaled queue lengths. In the fluid analysis the cycle time had a fixed length $c$. Due to the HTAP we can replace the constant cycle time from the fluid analysis by the random variable $\bm{C_i}$, the scaled \emph{length-biased} cycle time. If a random variable $X$ has a Gamma distribution with parameters $a$ and $b$, its length-biased equivalent $\bm{X}$ has a Gamma distribution with parameters $a+1$ and $b$. Obviously, the replacement of $c$ by $\bm{C_i}$ can only be carried out because of the independence between the length of the cycle time and the uniformly distributed random variables appearing in \eqref{LiFluid}. The following conjecture summarises this result. A theorem and proof for the scaled queue length distribution under the assumption of \emph{Poisson arrivals} can be found in the Appendix.

\begin{conjecture}\label{conjecturescaledqueuelengthHT}
As $\rho \uparrow 1$, the scaled queue length $(1-\rho)L_{i}$ converges in distribution to the product of two independent random variables. The first has the same distribution as $\mathcal{L}^\textit{fluid}_{i}$, and the second random variable $\Gamma$ has the same distribution as the limiting distribution of the scaled length-biased cycle time, $(1-\rho)\bm{C_i}$. For $i=1,\dots,N; k=i,\dots,i+N-1$, and $\rho\uparrow 1$,
\begin{equation}
(1-\rho)L_{i} \limdist \Gamma \times \mathcal{L}^\textit{fluid}_{i,k}\qquad \textrm{ w.p. }\hat\rho_{k},
%\Big(
%1+\sum_{j=i-N}^{k-1}\hat\rho_j(\hat\gamma_{i,j}b_i-1)+U_k\hat\rho_k(\hat\gamma_{i,k}b_i-1)
%\Big)
%\textrm{ w.p. }\pi_{i,k},
\label{LiHT}\\
\end{equation}
where $\Gamma$ is a random variable having a Gamma distribution with parameters $\alpha+1$ and $\delta\mu$, and $\mathcal{L}^\textit{fluid}_{i,k}$ is the ``standardised'' number of a type-$i$ particles in the fluid model during $V_k$, introduced below \eqref{LiFluid}. The random variables $\Gamma$ and $\mathcal{L}^\textit{fluid}_{i,k}$ are independent.
\end{conjecture}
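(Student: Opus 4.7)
The plan is to derive the conjecture by combining the HTAP with Conjecture \ref{conjecturescaledcycleHT} and the fluid analysis of Section~3.1. The key idea is the familiar heavy-traffic time-scale separation: on the slow scale the total scaled workload behaves like a Bessel-type diffusion, which determines the (length-biased) cycle length governing the macroscopic time horizon; on the fast scale, within a single cycle of known length $c$, the individual queue lengths trace out the deterministic fluid trajectory of Section~3.1, so their distribution at an arbitrary observation epoch is the fluid distribution \eqref{LiFluid} evaluated with that $c$.

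First I would fix an arbitrary observation epoch in steady state and condition on the cycle $C_i$ containing it. By the standard inspection-paradox argument, the length of this cycle is the \emph{length-biased} version $\bm{C_i}$ of the generic cycle time, and Conjecture \ref{conjecturescaledcycleHT} together with the Gamma-to-length-biased-Gamma shift (shape $\alpha\mapsto\alpha+1$) gives $(1-\rho)\bm{C_i}\limdist\Gamma$, where $\Gamma$ is Gamma$(\alpha+1,\delta\mu)$. Next, conditional on $\bm{C_i}=c$, invoke the HTAP: the per-queue workloads (and hence queue lengths, via division by $\btilde_i$) stay $O((1-\rho)^{-1})$ close to the fluid trajectory over the whole cycle. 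Thus, conditionally on falling inside a visit period $V_k$, the queue length $L_i$ at our observation epoch is approximately $L_{i,k}^{\textit{fluid}}$ with cycle length $c$, which by \eqref{LikDist} is uniform on an interval of length proportional to $c$. Since $k$ is selected with probability $\hat\rho_k$ (the fraction of time the server spends at $Q_k$), and the position within $V_k$ is independent Uniform$[0,1]$, this reproduces the mixture in \eqref{LiFluid}.

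Finally, normalising by $c$ yields $\mathcal{L}_{i,k}^{\textit{fluid}}$, a random variable depending only on the fixed system parameters and the independent uniform variates \emph{within} the cycle, hence independent of the slow-scale variable $\bm{C_i}$ itself. Multiplying $L_i=c\cdot\mathcal{L}_{i,k}^{\textit{fluid}}$ by $(1-\rho)$ and letting $\rho\uparrow 1$, the factor $(1-\rho)c$ converges in distribution to $\Gamma$ while $\mathcal{L}_{i,k}^{\textit{fluid}}$ is unaffected by the scaling, producing the claimed product representation $\Gamma\times\mathcal{L}_{i,k}^{\textit{fluid}}$ with independent factors.

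The main obstacle is the rigorous justification of the HTAP in the present routing setting. With customer routing the increments of the individual workloads are no longer driven by a single external renewal process per queue but also by the internal departures induced by routing, so one must control a multidimensional workload process whose projection onto the constant-workload hyperplane must collapse to the fluid model built in Section~3.1. In the Poisson case the Appendix carries this out via the branching structure of Resing \cite{resing93}; for general renewal arrivals a full proof would require extending the functional limit theorems of Olsen--Van der Mei \cite{olsenvdmei05} and Jennings \cite{jennings2010} to accommodate the cross-queue dependencies introduced by the routing matrix $\{p_{i,j}\}$, which as noted in the paper is beyond our current scope. The plan is therefore to argue at the level of HTAP (as elsewhere in this section), thereby reducing the conjecture to the two ingredients already accepted: the Gamma limit of Conjecture \ref{conjecturescaledworkHT} and the fluid-model representation \eqref{LiFluid}.
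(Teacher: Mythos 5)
Your argument is essentially the paper's own justification of this conjecture: replace the constant cycle length $c$ in the fluid representation \eqref{LiFluid} by the scaled length-biased cycle time from Conjecture \ref{conjecturescaledcycleHT} (hence the shape shift $\alpha\mapsto\alpha+1$), with independence of the two factors following from the HTAP time-scale separation. You also correctly identify that the only rigorously provable case at present is Poisson arrivals (handled in the Appendix via the multi-type branching-process route), so no gap relative to the paper's level of argument.
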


\paragraph{Waiting times.} The distributions of the scaled waiting times are obtained in a similar manner, exploiting the HTAP to combine the results from the fluid analysis and the scaled, length-biased cycle-time distribution, which leads to the following conjecture.

\begin{conjecture}\label{conjecturescaleddelayHT}
As $\rho \uparrow 1$, the scaled waiting time $(1-\rho)W_{i}$ converges in distribution to the product of two independent random variables. The first has the same distribution as $\mathcal{W}^\textit{fluid}_{i}$, and the second random variable $\Gamma$ has the same distribution as the limiting distribution of the scaled length-biased cycle time, $(1-\rho)\bm{C_i}$. For $i=1,\dots,N; k=i-N,\dots,i-1$, and $\rho\uparrow 1$,
\begin{equation}
(1-\rho)W_{i} \limdist \Gamma \times \mathcal{W}^\textit{fluid}_{i},
%\Big(
%1+\sum_{j=i-N}^{k-1}\hat\rho_j(\hat\gamma_{i,j}b_i-1)+U_k\hat\rho_k(\hat\gamma_{i,k}b_i-1)
%\Big)
%\textrm{ w.p. }\pi_{i,k},
\label{WiHT}\\
\end{equation}
where $\Gamma$ is a random variable having a Gamma distribution with parameters $\alpha+1$ and $\delta\mu$, and $\mathcal{W}^\textit{fluid}_{i}$ is the ``standardised'' waiting time of a type-$i$ particle in the fluid model, introduced below \eqref{WiFluid}. The two random variables $\Gamma$ and $\mathcal{W}^\textit{fluid}_{i}$ are independent.
\end{conjecture}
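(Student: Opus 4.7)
The plan is to imitate the argument sketched for Conjecture~\ref{conjecturescaledqueuelengthHT}, applied now to the waiting-time random variable. The starting point is the fluid-model identity following \eqref{WiFluid}: in the fluid model with deterministic cycle length $c$, the waiting time in $Q_i$ satisfies $W^\textit{fluid}_i = c\,\mathcal{W}^\textit{fluid}_i$, where $\mathcal{W}^\textit{fluid}_i$ is a bounded, cycle-length-free mixture of affine functions of independent Uniform$[0,1]$ variables. Thus, once the fluid-model waiting time is known, all randomness coming from the cycle length and from the position within the cycle are decoupled by construction.

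The next step is to transport this decomposition to the stochastic model via the HTAP. Under the heavy-traffic time-scale separation, during any given cycle the individual workloads evolve (after scaling by $(1-\rho)$) along exactly the fluid trajectories described in Section~4.1, while the total workload is essentially constant on the cycle's time scale and drifts only over many cycles according to the Bessel-type diffusion of Conjecture~\ref{conjecturescaledworkHT}. A tagged customer arriving at $Q_i$ in HT therefore sees a ``frozen'' cycle whose shape is that of the fluid model, but whose scaled length is random. Because a customer is more likely to arrive in a long cycle than in a short one, the relevant cycle length is the \emph{length-biased} cycle time, for which Conjecture~\ref{conjecturescaledcycleHT} together with the standard size-biasing relation for Gamma distributions (shape $\alpha \mapsto \alpha+1$) gives the limit $\Gamma \sim \text{Gamma}(\alpha+1,\delta\mu)$. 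Independence of $\Gamma$ and $\mathcal{W}^\textit{fluid}_i$ follows from the HTAP: the uniform variables $U_1,\dots,U_N$ describe the position of the arrival within the frozen cycle and the random choice of visit period, both independent of the total workload level that determines the cycle's scaled duration.

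Combining these ingredients, I would condition on the visit period $V_k$ during which the tagged customer arrives and on the elapsed fraction $U_k$ of that visit period. Plugging $c \leftarrow (1-\rho)^{-1}(1-\rho)\bm{C_i}$ into \eqref{wikfluid} and \eqref{WiFluid} and multiplying by $(1-\rho)$ yields
\[
(1-\rho)W_i \;=\; (1-\rho)\bm{C_i} \cdot \mathcal{W}^\textit{fluid}_i,
\]
with the two factors asymptotically independent. Passing to the limit $\rho \uparrow 1$, the first factor converges to $\Gamma$ by Conjecture~\ref{conjecturescaledcycleHT}, while the second factor is $\rho$-free, and the continuous mapping theorem delivers \eqref{WiHT}.

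The main obstacle, just as for Conjecture~\ref{conjecturescaledqueuelengthHT}, is that each of the ingredients above is itself conjectural under general renewal arrivals: the HTAP, the Bessel/Gamma total-workload limit, and the length-biased cycle-time limit have only been proven rigorously in special cases (Poisson arrivals, or polling without routing). Granting these, the remaining nontrivial point is the length-biasing step, i.e.\ justifying that a renewal arrival with rate $\hat\lambda_i/(1-\rho)\to\infty$ effectively samples cycles in proportion to their length and independently of the intra-cycle uniforms; this is precisely where Poisson-like behaviour of the superposed, speeded-up arrival process is invoked and where a rigorous argument would require most care. Given Conjectures~\ref{conjecturescaledworkHT}--\ref{conjecturescaledqueuelengthHT}, however, Conjecture~\ref{conjecturescaleddelayHT} reduces to the bookkeeping above combined with \eqref{WiFluid}.
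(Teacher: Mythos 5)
Your proposal matches the paper's own justification of this conjecture: the paper likewise obtains \eqref{WiHT} by invoking the HTAP to replace the deterministic cycle length $c$ in the fluid waiting time $W^\textit{fluid}_i = c\,\mathcal{W}^\textit{fluid}_i$ of \eqref{WiFluid} by the scaled \emph{length-biased} cycle time from Conjecture \ref{conjecturescaledcycleHT} (Gamma with shape $\alpha+1$), independent of the intra-cycle uniforms. Your additional remarks on which ingredients remain conjectural (the HTAP, the Gamma workload limit, and the length-biasing of the cycle seen by an arrival) are consistent with the paper's own caveats, so this is essentially the same argument.
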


The (HT limit of the) \emph{mean} waiting time of an arbitrary customer in $Q_i$ obviously follows from \eqref{WiHT}, but an easier way to find it, is by application of Little's Law to the mean queue length at $Q_i$, which is simply the mean amount of work in $Q_i$ divided by the mean total service time.
\noindent\begin{corollary}
For $\ii$,
\begin{equation}
(1-\rho)\E[W_i] \rightarrow \left(r+\frac{\sigma^2}{2\delta}\right)\frac{\delta_i}{\hat\gamma_i\btilde_i}, \qquad (\rho\uparrow1).
\label{EWht}
\end{equation}
\end{corollary}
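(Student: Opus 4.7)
The plan is to apply Little's Law at $Q_i$ and invoke Conjecture~\ref{conjecturescaledqueuelengthHT} for the mean queue length. Under the chosen scaling, $\lambda_i = \rho\hat\lambda_i$, and solving the traffic equations $\gamma_i = \lambda_i + \sum_j \gamma_j p_{j,i}$ gives $\gamma_i = \rho\hat\gamma_i$. Little's Law $\E[L_i] = \gamma_i\E[W_i]$ then yields
\[
(1-\rho)\E[W_i] = \frac{(1-\rho)\E[L_i]}{\rho\hat\gamma_i},
\]
so the problem reduces to extracting the HT limit of $(1-\rho)\E[L_i]$.

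For that limit I would take expectations in Conjecture~\ref{conjecturescaledqueuelengthHT}. Using independence of $\Gamma$ and $\mathcal{L}_i^{\textit{fluid}}$, and assuming the uniform integrability needed to pass from distributional convergence to convergence of means,
\[
(1-\rho)\E[L_i] \longrightarrow \E[\Gamma]\,\E[\mathcal{L}_i^{\textit{fluid}}].
\]
The first factor is the mean of a Gamma distribution with shape $\alpha+1$ and rate $\delta\mu$; substituting $\alpha = 2r\delta/\sigma^2$ and $\mu = 2/\sigma^2$ gives
\[
\E[\Gamma] = \frac{\alpha+1}{\delta\mu} = r + \frac{\sigma^2}{2\delta}.
\]
For the second factor I would exploit the short-cut noted in the statement: since each fluid particle at $Q_i$ carries $\btilde_i$ units of remaining total work, the mean number of fluid particles at $Q_i$ over a cycle equals the mean workload $v_i = \delta_i c$ divided by $\btilde_i$, so $\E[\mathcal{L}_i^{\textit{fluid}}] = \delta_i/\btilde_i$. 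Assembling the three pieces and using $\rho \to 1$ in the denominator $\rho\hat\gamma_i$ yields precisely \eqref{EWht}.

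The main obstacle is the transition from distributional to expectational convergence, which strictly requires uniform integrability of the family $\{(1-\rho)L_i\}$ as $\rho \uparrow 1$. Since Conjecture~\ref{conjecturescaledqueuelengthHT} itself is not established rigorously, this step can be handled in the same informal spirit adopted throughout the paper: the Gamma tails of $\Gamma$ together with the boundedness of $\mathcal{L}_i^{\textit{fluid}}$ on a deterministic compact interval (by \eqref{LikDist}) make the required integrability very plausible, so no new conceptual ingredient is needed beyond the conjectures already in play.
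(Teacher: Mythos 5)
Your proposal is correct and follows essentially the same route as the paper, which obtains the corollary by applying Little's Law to the mean queue length at $Q_i$, identified as the mean amount of work in $Q_i$ (namely $\delta_i$ times the mean length-biased cycle time $r+\sigma^2/(2\delta)$) divided by the mean total service time $\btilde_i$. Your explicit computation of $\E[\Gamma]$, the identity $\E[\mathcal{L}_i^{\textit{fluid}}]=\delta_i/\btilde_i$, and the remark on uniform integrability merely spell out details the paper leaves implicit.
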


\paragraph{Path times.}

The derivation of the limiting distribution of the scaled path times proceeds along the exact same lines, starting with the fluid model. Due to the HTAP, we may again replace the constant cycle time $c$ from the fluid analysis by the random variable $\bm{C_i}$ to obtain the limiting distribution of the scaled path times. The conjecture below summarises this result, which is consistent with the heavy-traffic snapshot principle \cite{whittsnapshot}.

\begin{conjecture}\label{conjecturescaledpathtimesHT}
As $\rho \uparrow 1$, the scaled path time $(1-\rho)W_{i_1,i_2,\dots,i_M}$ converges in distribution to the product of a random variable having the same distribution as $\mathcal{W}^\textit{fluid}_{i_1,i_2,\dots,i_M}$ and a random variable $\Gamma$ having the same distribution as the limiting distribution of the scaled length-biased cycle time, $(1-\rho)\bm{C_i}$. For $i,k=1,\dots,N$, and $\rho\uparrow 1$,

\begin{equation}
(1-\rho)W_{i_1,i_2,\dots,i_M} \limdist \Gamma \times \mathcal{W}^\textit{fluid}_{i_1,i_2,\dots,i_M},\\
\end{equation}
where $\Gamma$ is a random variable having a Gamma distribution with parameters $\alpha+1$ and $\delta\mu$, and the distribution of $\mathcal{W}^\textit{fluid}_{i_1,i_2,\dots,i_M}$ is given by \eqref{pathtimefluid}.
\end{conjecture}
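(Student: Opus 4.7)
The plan is to mirror the HTAP-based arguments already used for Conjectures \ref{conjecturescaledqueuelengthHT} and \ref{conjecturescaleddelayHT}, combining the fluid path-time recursion in \eqref{pathtimefluid} with the length-biased cycle-time limit from Conjecture \ref{conjecturescaledcycleHT}. Two structural features make the result essentially immediate once those are in hand: (i) the fluid path time $W^{\textit{fluid}}_{i_1,\dots,i_M}$ is homogeneous of degree one in the cycle length $c$, because each building block appearing in the recursion, including \eqref{wikfluid}, is linear in $c$ while the nested arguments $\mathcal{P}_{i_j,k}(u_k)/(\hat\rho_{i_j}c)$ are scale-invariant, so that $\mathcal{W}^{\textit{fluid}}_{i_1,\dots,i_M}=W^{\textit{fluid}}_{i_1,\dots,i_M}/c$ is free of any $c$-dependence; (ii) the HTAP-induced time-scale separation forces the cycle-length factor and the within-cycle factor to become asymptotically independent.

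Concretely I would proceed in four steps. First, invoke the HTAP in the same informal sense as in the remainder of Section \ref{sect:htap}: the total workload diffuses on the slow time scale of order $1/(1-\rho)^2$ while the per-queue workloads and visit periods traverse the deterministic fluid trajectory of Section 3.1 on the fast time scale $1/(1-\rho)$. Second, sample the cycle in which the tagged external customer arrives; because long cycles contain proportionally more arrivals this cycle is length-biased, and by Conjecture \ref{conjecturescaledcycleHT} combined with the standard Gamma length-biasing identity its rescaled length converges in distribution to $\Gamma \sim \textrm{Gamma}(\alpha+1,\delta\mu)$. Third, conditionally on this cycle, the visit period $V_k$ containing the arrival is selected with probability $\hat\rho_k$ and the elapsed fraction $U_k$ is uniform on $[0,1]$; these are precisely the random ingredients entering $\mathcal{W}^{\textit{fluid}}_{i_1,\dots,i_M}$ via \eqref{pathtimefluid}, and they are asymptotically independent of the cycle length.

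Fourth, invoke Whitt's snapshot principle \cite{whittsnapshot} to argue that the successive cycles encountered during the $M$ stages of the path all have, to leading order, the same rescaled length, namely the length-biased cycle in which the customer arrived. This is justifiable because the path time is of order $1/(1-\rho)$, so only $O(1)$ cycles are traversed, while the Bessel-type total-workload process fluctuates on the slower scale $1/(1-\rho)^2$ and is therefore asymptotically frozen over the sojourn. Inserting this common cycle length into the fluid recursion then yields
\[
(1-\rho)W_{i_1,i_2,\dots,i_M} = \bigl((1-\rho)\bm{C_i}\bigr)\cdot \mathcal{W}^{\textit{fluid}}_{i_1,i_2,\dots,i_M} + o_P(1),
\]
and passing to the limit produces the stated product representation, with independence of the two factors inherited from the independence of $\Gamma$ and of the uniforms $U_1,\dots,U_N$.

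The main obstacle is step four: rigorously justifying the snapshot principle in the networked setting. One must control the accumulated drift of the total workload over a path time that spans a random $O(1)$ number of cycles and that, through customer routing, collects contributions from several queues whose internal arrival streams are not themselves renewal. In the standard polling setting this has been carried through by Coffman et al.\ \cite{coffman95,coffman98} and by Olsen and Van der Mei \cite{olsenvdmei03}, but the combination of general renewal external arrivals with customer routing makes a fully rigorous argument genuinely open, which is why the statement is retained at the level of a conjecture.
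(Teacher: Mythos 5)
Your proposal follows essentially the same route as the paper: the paper justifies the conjecture by invoking the HTAP to replace the constant cycle length $c$ in the fluid path-time recursion \eqref{pathtimefluid} by the length-biased cycle time $\bm{C_i}$ with limiting Gamma$(\alpha+1,\delta\mu)$ distribution, and appeals to the heavy-traffic snapshot principle \cite{whittsnapshot} to keep this cycle length frozen over the $O(1)$ cycles spanned by the path. Your write-up merely makes explicit what the paper leaves implicit (the degree-one homogeneity of $W^{\textit{fluid}}_{i_1,\dots,i_M}$ in $c$ and the asymptotic independence of the two factors), and, like the paper, correctly leaves the result at the level of a conjecture since the snapshot step is not rigorously established in this routed, renewal-arrival setting.
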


\subsection{Original model: Exhaustive service}

In Subsection \ref{sect:originalmodelgated} we have used the HTAP to easily derive the limiting distributions of the scaled workload at cycle beginnings, the cycle times, the waiting times, and path times. This principle can be used in the exact same manner when some of the queues receive exhaustive service. Therefore, we will not repeat all these steps to summarise the results for systems with exhaustive service. Basically, one only has to take the expressions from the fluid model, and replace the constant cycle time $c$ by a random variable with the same limiting distribution as $(1-\rho)\bm{C_i}$, i.e., a Gamma distribution with parameters $\alpha+1$ and $\delta\mu$ (as defined earlier in this section, taking $\delta:=\delta_e$ as defined in \eqref{deltaiexhaustive} for exhaustive service).

\subsection{Poisson arrivals}

In the current paper we have derived the system behaviour under heavy traffic for systems with general renewal arrival processes based on the partially conjectured HTAP. Van der Mei \cite{RvdM_QUESTA} has developed a unifying framework to derive rigorous proofs of the heavy-traffic behaviour of branching-type polling models with (compound) \textit{Poisson} arrivals. By applying this stepwise approach in conjunction with the results of the previous section to the model under consideration, one can rigorously prove the HT asymptotics in queueing networks served by a single shared server under the assumption of Poisson arrivals. In the Appendix we provide such a proof for the queue-length distributions.

\subsection{Increasing setup times}
In HT the system reaches saturation due to an increase in the total utilisation $\rho$. However, the system might also get saturated due to an increase of the total switch-over time $r$. These two asymptotic regimes show, however, significantly different behaviour. In \cite{winandslargesetups07,winandslargesetupsbranching09} it was shown for polling systems that the scaled cycle and intervisit times converge in probability to deterministic quantities in the case that the (deterministic) switch-over times tend to infinity. One has to compare this with the Gamma distribution which is prevalent in the scaled cycle time in the diffusion limit of the present section. The results for polling systems with increasing switch-over times of \cite{winandslargesetups07,winandslargesetupsbranching09} can be extended to the setting of the current paper. That is, as a consequence of the scaled cycle time converging to a constant, a fluid limit is obtained implying that the scaled delay converges in distribution to a mixture of uniform distributions (cf. Formula \eqref{WiFluid}).

\section{Approximations for general traffic conditions}\label{sect:approx}

The HT distributions derived in the preceding section may be used directly as an approximation for the waiting-time and path-time distributions in non-heavy-traffic systems. However, they tend to perform poorly under low or moderate traffic. Therefore, in this section we combine these HT asymptotics with newly developed LT results leading to highly accurate approximations for the mean performance measures for the whole range of load values.   We will assess the accuracy of the resulting approximations in Section \ref{sect:numericalresults}.

\subsection{Waiting-time approximations}

In order to derive an approximation for the mean waiting time, we study the LT limit of $W_i$ which can be found by conditioning on the customer type (external or internally routed).
\noindent\begin{theorem}
For $\ii$ ,
\begin{equation}
W_i \limdist \begin{cases}
R^{\textit{res}} & \qquad\textrm{w.p. }{\lambda_i}/{\gamma_i}, \\
R_{j,i} &\qquad\textrm{w.p. }{\gamma_jp_{j,i}}/{\gamma_i},
\end{cases}
\qquad(\rho\downarrow0),
\label{Wlt}
\end{equation}
where $R^{\textit{res}}$ is a residual total switch-over time, with probability density function
\[
f_{R^\textit{res}}(t) = \frac{1-\P(R\leq t)}{\E[R]},
\]
and $R_{j,i} = R_j + R_{j+1}+\dots+R_{i-1}$ is the sum of the switch-over times between $Q_j$ and $Q_i$, which is a cyclic sum if $i\leq j$. Only if $i=j$ \emph{and} service in $Q_i$ is exhaustive, we have $R_{j,i} = 0$.
\\

\begin{proof}
In LT we ignore all $O(\rho)$ terms, which implies that we can consider a customer as being alone in the system. Equation \eqref{Wlt} can be interpreted as follows. An arbitrary customer in $Q_i$ has arrived from outside the network with probability ${\lambda_i}/{\gamma_i}$. In this case he has to wait for a residual total switch-over time $R^\textit{res}$. If a customer in $Q_i$ arrives after being served in another queue, say $Q_j$ (with probability ${\gamma_jp_{j,i}}/{\gamma_i}$), he has to wait for the mean switch-over times $R_j,\dots,R_{i-1}$.
\end{proof}
\end{theorem}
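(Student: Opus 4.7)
The strategy is to pass to the limit $\rho\downarrow 0$ by exploiting the fact that, as the system becomes empty, an arriving customer sees no other customers to compete with (up to terms of order $O(\rho)$), so that the waiting time is entirely determined by the position of the server within the cycle of switch-overs. First I would decompose an arriving customer at $Q_i$ according to its provenance using the flow-balance identity $\gamma_i = \lambda_i + \sum_{j=1}^N \gamma_j p_{j,i}$. This immediately justifies the announced mixing weights $\lambda_i/\gamma_i$ (external arrival) and $\gamma_j p_{j,i}/\gamma_i$ (arrival by internal routing from $Q_j$).

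Next I would handle the two cases separately. For an internally routed customer coming from $Q_j$, the key observation is that the routing transition is instantaneous, so at the moment the tagged customer enters $Q_i$ the server has just completed a service at $Q_j$ and is therefore physically located at $Q_j$. In the light-traffic limit the server finds no further work at $Q_j$ (with probability $1-O(\rho)$), hence it departs immediately and incurs exactly the deterministic sequence of switch-overs $R_j, R_{j+1},\dots,R_{i-1}$ before returning to $Q_i$, which is the sum $R_{j,i}$. The special case $i=j$ with exhaustive service is precisely the situation where the server does not leave $Q_i$ before serving the tagged customer, so $R_{j,i}=0$; under any other configuration the server completes the full cyclic tour.

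For an external arrival I would argue as follows. Since $\sum_i \rho_i = \rho$, the fraction of time the server spends on visit periods tends to zero as $\rho\downarrow 0$, and the fraction of time spent on switch-overs tends to one. Consequently the limiting probability that a Poisson-like arrival (here a renewal arrival, but the key point is that external interarrival times are $O(1/\rho)$) catches the server in the middle of a switch-over sequence equals one. Conditioning on the server being in the switch-over phase and applying the renewal-theoretic length-biased sampling / inspection paradox to the aggregate switch-over duration $R$ yields that the residual time until the server next reaches $Q_i$ is distributed as $R^{\textit{res}}$, with density $(1-\mathbb{P}(R\le t))/\mathbb{E}[R]$. During this residual period no service interruptions occur (again modulo $O(\rho)$ terms), so this residual time is the entire waiting time.

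The main obstacle, as so often in light-traffic arguments, is controlling the $O(\rho)$ correction carefully enough to justify the convergence in distribution rather than merely of the means. Concretely, one must verify (i) that the probability the server is caught inside a visit period, or inside a switch-over different from the full cyclic sequence, tends to zero, and (ii) that the conditional distribution of the elapsed switch-over time given the server is switching converges to the stationary age distribution of a renewal process with interarrival distribution $R$. Both steps are standard once one notes that visits shrink as $O(\rho)$ while switch-over times remain of order one, but making the convergence of the joint law of (customer-type, residual time) rigorous is the technical crux.
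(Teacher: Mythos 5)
Your proposal is correct and follows essentially the same route as the paper's own (much terser) argument: decompose by customer origin via $\gamma_i = \lambda_i + \sum_j \gamma_j p_{j,i}$, note that in light traffic the tagged customer is effectively alone so the waiting time reduces to the server's switch-over schedule, giving the residual total switch-over time $R^{\textit{res}}$ for external arrivals and the deterministic cyclic sum $R_{j,i}$ for internally routed ones. The additional care you devote to the $O(\rho)$ error terms and the stationary-age argument goes beyond what the paper records, but does not change the approach.
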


The LT limit of the \emph{mean} waiting time directly follows from \eqref{Wlt}:
\begin{equation}
\E[W_i] \rightarrow \frac{\lambda_i}{\gamma_i}\frac{r^{(2)}}{2r} + \sum_{j=i^*}^{i-1}\frac{\gamma_jp_{j,i}}{\gamma_i}\sum_{k=j}^{i-1}r_k, \qquad (\rho\downarrow0),
\label{EWlt}
\end{equation}
where $i^* = i-N$ if $Q_i$ has gated service, and $i^* = i-N+1$ if $Q_i$ has exhaustive service.
Subsequently, we construct an interpolation between the LT and HT limits that can be used as an approximation for the mean waiting time for arbitrary $\rho$. For $\ii$,
\begin{equation}
\E[W_i^{\textit{approx}}]=\frac{w_i^{\textit{LT}} + (w_i^{\textit{HT}}-w_i^{\textit{LT}})\rho}{1-\rho},\label{EWapprox}
\end{equation}
where $w_i^{\textit{LT}}$ and $w_i^{\textit{HT}}$ are the LT and HT limits of the mean waiting time respectively, as given in \eqref{EWlt} and \eqref{EWht}. Because of the way $\E[W_i^\textit{approx}]$ is constructed, it has the nice properties that it is exact as $\rho\downarrow0$ and $\rho\uparrow1$. Furthermore, if we have Poisson arrivals, it satisfies a so-called pseudo-conservation law for the mean waiting times, which is derived in \cite{sidi2}. This implies that the $\E[W_i^\textit{approx}]$ yields exact results for symmetric (and, hence, single-queue) systems. Finally, it can be shown that this approximation is exact in the limiting case of deterministic set up times that tend to infinity (see \cite{winandslargesetups07,winandslargesetupsbranching09}).

The astute reader has already noticed that the LT result (\ref{EWlt}) is  a first-order Taylor expansion of the mean waiting time at $\rho=0$, which can be naturally extended with the $m^{th}$ derivatives of the mean waiting time with respect to $\rho$ at $\rho=0$. Together with the HT limit one has $m+1$ pieces of information, which can be used to construct an $(m+1)^{th}$ degree polynomial interpolation (cf. \cite{boonapprox2011}). Therefore, it is not inconceivable that the approximation can be refined further, but since the primary goal of this paper has been the derivation of the path-time time distributions under HT conditions such refinements are beyond the scope of the paper. Moreover, the  presented first-order polynomial interpolation is already quite accurate as can be seen in the numerical evaluation.

\subsection{Path-time approximations}

The principle used to determine waiting-time approximations, can be applied to path times in the exact same manner. For reasons of compactness, we will not present the details in this paper. Almost all of the required ingredients to develop a path-time approximation have been discussed. The only missing piece of information is the LT limit of the mean path-time. However, it can easily be found given the LT limit of the mean waiting time \eqref{EWlt}:
\begin{equation}
\E[W_{i_1,i_2,\dots,i_M}] \rightarrow \frac{r^{(2)}}{2r} + b_{i_1} + \sum_{j=2}^M \left(r_{i_{j-1},i_j} + b_{i_j}\right),\qquad (\rho\downarrow0),
\label{EPathlt}
\end{equation}
where $r_{j,i} = \E[R_{j,i}]$, with $R_{j,i}$ as defined in \eqref{Wlt}.

In the current section we have focussed on an interpolation of the \textit{mean} waiting time and path times for the ease of presentation. It goes without saying that following the same recipe one could derive an identical interpolation approximation for higher moments as well and, subsequently, fit a phase-type distribution for the complete distribution. Alternatively, one could follow the idea of \cite{dorsman2011}, in which a redined HT distribution is derived as an approximation of the complete distribution for arbitrary loads. However, due to the strong correlation between the waiting times of a customer within a specific path, we have observed in numerical tests that the accuracy of this approximation technique is not as high as for standard polling models. We would like to end with noting that for the mean path times these correlations obviously do no play a role, whence the developed approximation for the mean figures gives excellent results as illustrated in the next section.

\section{Numerical evaluation}\label{sect:numericalresults}

In the current section we present four practical cases from completely different application areas, indicating the versatility of the studied roving server network and showing the practical usage of the developed  asymptotics and approximations. Moreover, these cases clearly illsutrate the fact that the path time is the most important performance measures in most practical applications. Special cases of the network  are, for example, standard polling systems \cite{takagi3}, tandem queues \cite{nair,taube}, multi-stage queueing models with parallel queues \cite{katayama}, feedback vacation queues \cite{boxmayechiali97, takine}, symmetric feedback polling systems \cite{takagifeedback,takine}, systems with a waiting room \cite{alineuts84,takacsfeedback77}. We would like to remind the reader that the only practical alternative to our expressions for a complete performance analysis of the studied network is simulation.

\subsection{Example 1: A production system with rework}

The first application is a multiproduct system with random yields introduced by \cite{Grasman1}, which is a stylised practical case of a producer of plastic bumpers for automobiles. Every item is produced in a production queue and is defect with probability $p$; a defect item has to be reproduced in the same production queue. However, defect items are first routed to a temporary storage queue, which is served immediately after the current queue, before they are routed back to the original queue. The combination of exhaustively served production and storage queues implies that newly arriving items are served during the current cycle, whereas defect items will be reproduced in the next cycle. There are no switch-over time and service time required for these storage queues, implying that the storage queues do not contribute to the utilisation of the system.

We consider a system consisting of five production queues with relative loads $(0.1, 0.2, 0.2, 0.2, 0.3)$, Poisson arrivals, exponentially distributed service times with mean 1, and switch-over times with constant value $5$. A typical feature of these type of production systems is that the switch-over times are relatively large compared to the processing times, in contrast to the next numerical example that we discuss. We have additional five storage queues to which the defect items are routed. Each item is defect with probability $0.25$. We run simulations for various values of $\rho$, and we compare the results with the limiting HT distributions. For each value of $\rho$ we run 100 simulations, each of length $10^8$. These settings yield extremely accurate estimates for the probability distribution functions.

\paragraph{Convergence to the HT limit.} In this example, mostly due to the relatively large switch-over times, the (scaled) path-time distributions converge relatively fast to their HT limits. This is illustrated in Table \ref{convergencetoHT} and in Figure \ref{convergencetoHTfigs}. Table \ref{convergencetoHT} depicts
the means, standard deviations and tail probabilities of the (scaled) path-time distributions for nine paths. Only production queues are included in the path names. For example, path $2\rightarrow2\rightarrow2$ actually includes two visits to the storage queue of production queue 2. The tail probabilities are given for path lengths of 20, 50, 80 if the corresponding path consists of respectively 1, 2, and 3 visits to a queue. The results for $\rho<1$ are obtained using simulation, and the results for $\rho=1$ are obtained using the theory presented in Section \ref{sect:htap}. In Table \ref{convergencetoHT} and Figure \ref{convergencetoHTfigs}, it can be seen that already for $\rho=0.8$ the scaled path-time distributions are close to the limiting distributions.

\begin{table}[h]
\[
\begin{array}{|c|ccc|ccc|ccc|}
\hline
\multicolumn{10}{|c|}{\cellcolor[gray]{0.3}\color{white}\textrm{Scaled path times: Means}}\\
\hline
\rho & 1 & 1\rightarrow1 & 1\rightarrow1\rightarrow1 & 2 & 2\rightarrow2 & 2\rightarrow2\rightarrow2 & 5 & 5\rightarrow5 & 5\rightarrow5\rightarrow5\\
\hline
0.1 & 13.40 & 39.29 & 65.27 & 13.32 & 39.08 & 65.07 & 13.21 & 38.88 & 64.76 \\
0.3 &  13.22 & 38.89 & 64.84 & 12.92 & 38.26 & 64.14 & 12.64 & 37.62 & 63.37 \\
0.5 &  13.00 & 38.58 & 64.47 & 12.53 & 37.50 & 63.22 & 12.03 & 36.41 & 62.04 \\
0.7 & 12.79 & 38.30 & 64.19 & 12.10 & 36.77 & 62.51 & 11.40 & 35.24 & 60.82 \\
0.8 &  12.67 & 38.18 & 64.10 & 11.88 & 36.42 & 62.18 & 11.08 & 34.65 & 60.23 \\
0.9 &  12.55 & 38.08 & 64.03 & 11.65 & 36.07 & 61.86 & 10.74 & 34.06 & 59.67 \\
0.95 &  12.48 & 38.03 & 64.02 & 11.53 & 35.90 & 61.73 & 10.57 & 33.76 & 59.40\\
\hline
1.00 & 12.42 & 38.00 & 64.53 & 11.41 & 35.74 & 61.95 & 10.40 & 33.47 & 59.37 \\
\hline
\multicolumn{10}{c}{\ }\\
\hline
\multicolumn{10}{|c|}{\cellcolor[gray]{0.3}\color{white}\textrm{Scaled path times: Standard deviations}}\\
\hline
\rho & 1 & 1\rightarrow1 & 1\rightarrow1\rightarrow1 & 2 & 2\rightarrow2 & 2\rightarrow2\rightarrow2 & 5 & 5\rightarrow5 & 5\rightarrow5\rightarrow5\\
\hline
0.1 &   7.39 & 7.86 & 8.40 & 7.33 & 7.83 & 8.34 & 7.25 & 7.78 & 8.31 \\
0.3 &   7.59 & 8.91 & 10.29 & 7.39 & 8.76 & 10.16 & 7.19 & 8.62 & 10.05 \\
0.5 &   7.77 & 9.94 & 12.24 & 7.44 & 9.70 & 12.02 & 7.09 & 9.45 & 11.80 \\
0.7 &    7.93 & 11.02 & 14.22 & 7.46 & 10.66 & 13.98 & 6.98 & 10.29 & 13.71 \\
0.8 &    7.99 & 11.57 & 15.27 & 7.46 & 11.15 & 15.01 & 6.92 & 10.71 & 14.70 \\
0.9 &    8.05 & 12.12 & 16.33 & 7.45 & 11.63 & 16.06 & 6.85 & 11.14 & 15.72 \\
0.95 &   8.07 & 12.41 & 16.89 & 7.45 & 11.89 & 16.61 & 6.82 & 11.37 & 16.27 \\
\hline
1.00 & 8.09 & 12.69 & 18.70 & 7.44 & 12.13 & 18.07 & 6.78 & 11.58 & 17.45 \\
\hline
\multicolumn{10}{c}{\ }\\
\hline
\multicolumn{10}{|c|}{\cellcolor[gray]{0.3}\color{white}\textrm{Scaled path times: Tail probabilities}}\\
\hline
\rho & 1 & 1\rightarrow1 & 1\rightarrow1\rightarrow1 & 2 & 2\rightarrow2 & 2\rightarrow2\rightarrow2 & 5 & 5\rightarrow5 & 5\rightarrow5\rightarrow5\\
\hline
\rho     &  20 & 50 & 80 &  20 & 50 & 80&  20 & 50 & 80\\
\hline
%0.7 & 0.194 & 0.147 & 0.143 & 0.159 & 0.116 & 0.117 & 0.123 & 0.088 & 0.094 \\
0.8 &   0.191 & 0.154 & 0.159 & 0.152 & 0.120 & 0.130 & 0.113 & 0.089 & 0.103 \\
0.9 &   0.187 & 0.161 & 0.175 & 0.144 & 0.123 & 0.142 & 0.103 & 0.090 & 0.113 \\
0.95 &  0.186 & 0.165 & 0.184 & 0.141 & 0.125 & 0.149 & 0.098 & 0.090 & 0.118 \\
\hline
1.00 &   0.184 & 0.168 & 0.192 & 0.138 & 0.126 & 0.156 & 0.093 & 0.091 & 0.123 \\
\hline
\end{array}
\]
\caption{Numerical results for Example 1. %For nine paths, the means, standard deviations and tail probabilities of the (scaled) path-time distributions are given. Only production queues are included in the path names. For example, path $2\rightarrow2\rightarrow2$ actually includes two visits to the storage queue of production queue 2. The tail probabilities are given for path lengths of 20, 50, 80 if the corresponding path consists of respectively 1, 2, and 3 visits to a queue. The results for $\rho<1$ are obtained using simulation, and the results for $\rho=1$ are obtained using the theory presented in Section \ref{sect:htap}.
}
\label{convergencetoHT}
\end{table}

\begin{figure}[h]
\begin{center}
\parbox{0.7\textwidth}{\centering
\includegraphics[width=\linewidth]{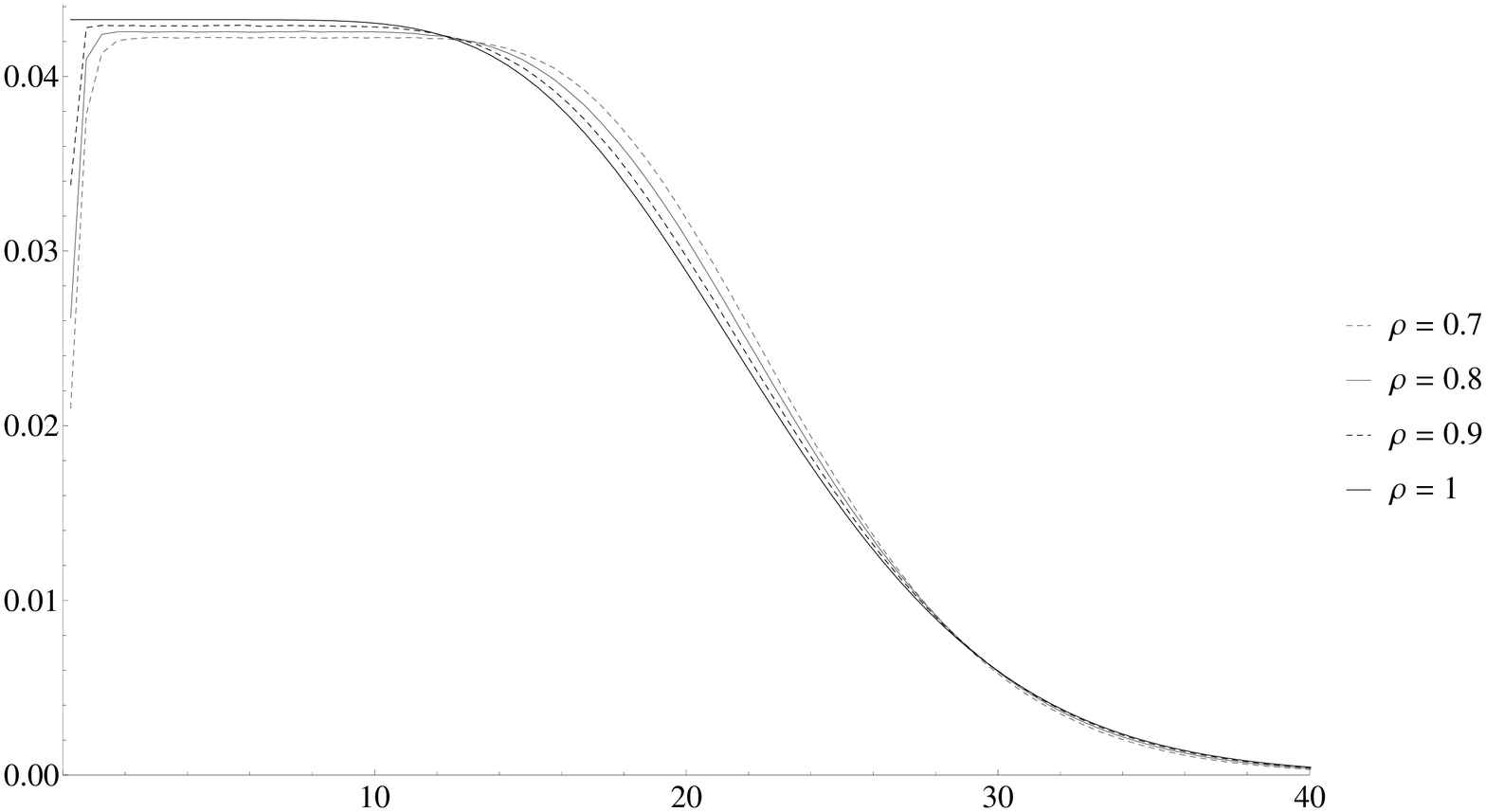}\\
$1$
}\\
\parbox{0.7\textwidth}{\centering
\includegraphics[width=\linewidth]{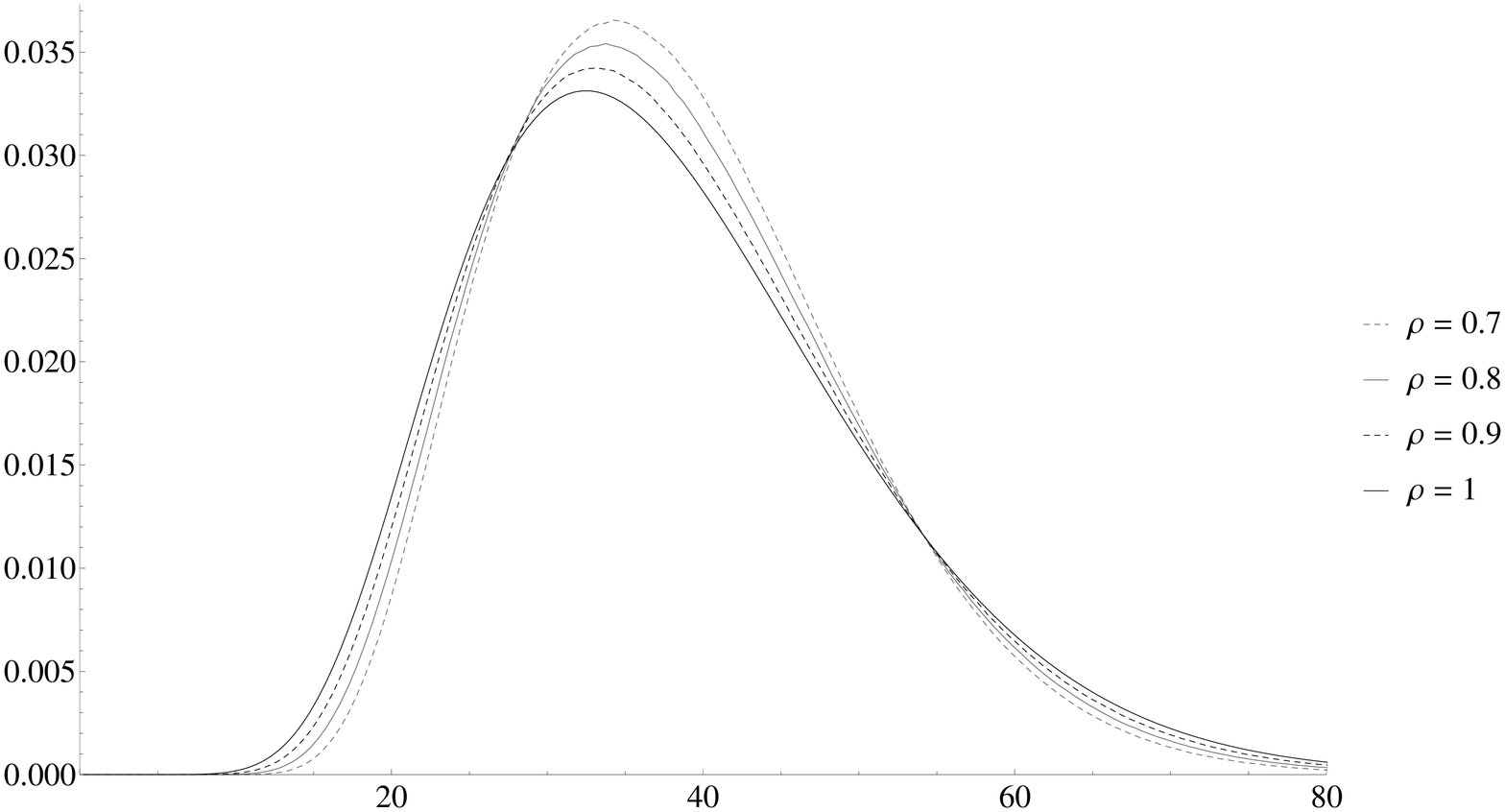}\\
$1\rightarrow1$
}\\
\parbox{0.7\textwidth}{\centering
\includegraphics[width=\linewidth]{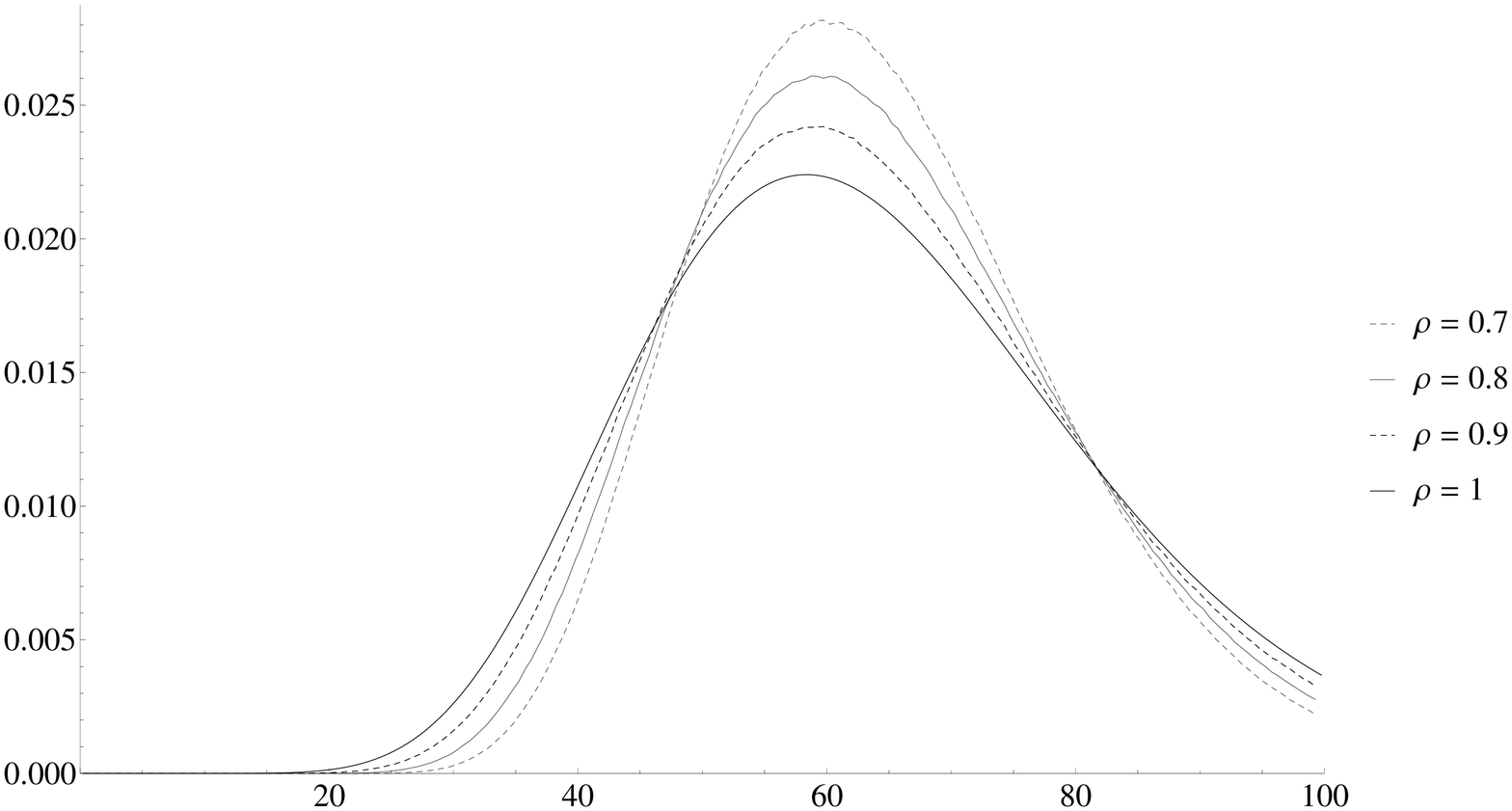}\\
$1\rightarrow1\rightarrow1$
}
\end{center}
\caption{The probability densities for three selected paths of the first numerical example. The results for $\rho<1$ are obtained using simulation. The results for $\rho=1$ are obtained using the analytical results from Section \ref{sect:htap}.}
\label{convergencetoHTfigs}
\end{figure}

\paragraph{Accuracy of the approximation.} Table \ref{accuracyApprox} shows the approximated means  using the  path-time approximation discussed in Section \ref{sect:approx}. For nine paths, the means  of the (scaled) path-time approximations are given. Only production queues are included in the path names. For example, path $2\rightarrow2\rightarrow2$ actually includes two visits to the storage queue of production queue 2. Note that we have chosen to display the values for the \emph{scaled} path times $(1-\rho)W_{i_1,i_2,\dots,i_M}$, because now they can be compared directly to the simulated values in Table \ref{convergencetoHT}. The results show that the accuracy is very high for all values of the load and for all paths.

\begin{table}[h]
\begin{center}
\[
\begin{array}{|c|ccc|ccc|ccc|}
\hline
\multicolumn{10}{|c|}{\cellcolor[gray]{0.3}\color{white}\textrm{Approximated scaled path times: Means}}\\
\hline
\rho & 1 & 1\rightarrow1 & 1\rightarrow1\rightarrow1 & 2 & 2\rightarrow2 & 2\rightarrow2\rightarrow2 & 5 & 5\rightarrow5 & 5\rightarrow5\rightarrow5\\
\hline
0.1 &  13.39 & 39.35 & 65.40 & 13.29 & 39.12 & 65.14 & 13.19 & 38.90 & 64.89 \\
0.3 &  13.17 & 39.05 & 65.21 & 12.87 & 38.37 & 64.43 & 12.57 & 37.69 & 63.66 \\
0.5 &  12.96 & 38.75 & 65.01 & 12.45 & 37.62 & 63.72 & 11.95 & 36.49 & 62.43 \\
0.7 &  12.74 & 38.45 & 64.82 & 12.04 & 36.86 & 63.01 & 11.33 & 35.28 & 61.21 \\
0.8 &  12.63 & 38.30 & 64.72 & 11.83 & 36.49 & 62.66 & 11.02 & 34.68 & 60.60 \\
0.9 &  12.52 & 38.15 & 64.63 & 11.62 & 36.11 & 62.30 & 10.71 & 34.07 & 59.98 \\
0.95 & 12.47 & 38.08 & 64.58 & 11.51 & 35.92 & 62.13 & 10.56 & 33.77 & 59.68 \\
\hline
1.00 & 12.42 & 38.00 & 64.53 & 11.41 & 35.74 & 61.95 & 10.40 & 33.47 & 59.37 \\
\hline

\end{array}
\]
\end{center}
\caption{Accuracy of the approximation in Example 1.}
\label{accuracyApprox}
\end{table}

\subsection{Example 2: Tandem queues with parallel queues in the first stage.}

\begin{figure}[ht]
\begin{center}
\includegraphics[width=0.5\linewidth]{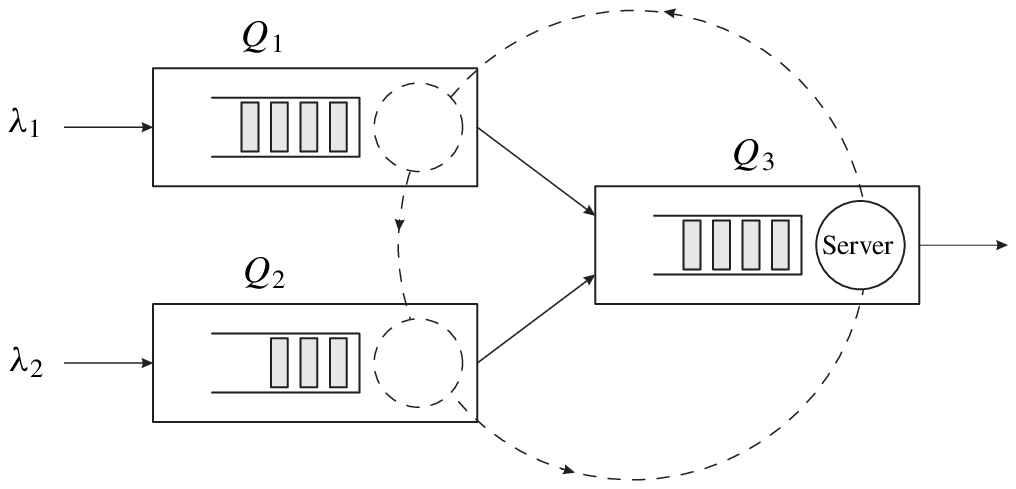}
\end{center}
\caption{Tandem queues with parallel queues in the first stage, as discussed in Example 2.}
\label{figureExample1}
\end{figure}

Secondly, we use an example that was introduced by Katayama \cite{katayama}, who studies call processing in packet switching systems composed of multi-processors. In this application, each processor has two kinds of buffers in parallel for receiving data packets from switching links and from lines and/or subscribers. During the first stage, called input processing, the processor polls both buffers for data packets and moves these packets to a queue in the second stage. During this second stage, called packet processing, the jobs are actually executed. This system can be modelled as a network consisting of three queues. Customers arrive at $Q_1$ and $Q_2$, and are routed to $Q_3$ after being served (see Figure \ref{figureExample1}). This tandem queueing model with parallel queues in the first stage is a special case of a roving server network. We simply put $p_{1,3}=p_{2,3}=p_{3,0}=1$ and all other $p_{i,j}$ are zero. We use the same values as in \cite{katayama}: Poisson arrivals with $\lambda_1=\lambda_2/10$, service times are deterministic with $b_1=b_2=1$, and $b_3=5$. The server serves the queues exhaustively, in cyclic order: 1, 2, 3, 1, \dots. The only difference with the model discussed in \cite{katayama} is that we introduce (deterministic) switch-over times $r_2=r_3=2$. We assume that no time is required to switch between the two queues in the first stage, so $r_1=0$.

\paragraph{Convergence to the HT limit.} For the two possible paths, the means, standard deviations and tail probabilities of the (scaled) path-time distributions are given in Table \ref{convergencetoHT2} and in Figure \ref{convergencetoHTfigs2}.
The tail probabilities in Table \ref{convergencetoHT2} are given for path lengths of 20. The results for $\rho<1$ are obtained using simulation, and the results for $\rho=1$ are obtained using the theory presented in Section \ref{sect:htap}.
It can be clearly observed that for loads larger than $0.8$ the scaled path-time distributions are almost identical to the limiting distributions.

\begin{table}[h]
\[
\begin{array}{|c|cc|}
\hline
\multicolumn{3}{|c|}{\cellcolor[gray]{0.3}\color{white}\textrm{Scaled path times: Means}}\\
\hline
\rho & 1\rightarrow3 & 2\rightarrow3\\
\hline
0.1 &  9.58 & 9.70 \\
0.3 &  8.69 & 9.12 \\
0.5 &  7.74 & 8.56 \\
0.7 &  6.75 & 8.01 \\
0.8 &  6.24 & 7.73 \\
0.9 &  5.72 & 7.47 \\
0.95 & 5.46 & 7.34 \\
\hline
1.00 & 5.20 & 7.20 \\
\hline
\multicolumn{3}{c}{\ }\\
\hline
\multicolumn{3}{|c|}{\cellcolor[gray]{0.3}\color{white}\textrm{Scaled path times: Standard deviations}}\\
\hline
\rho & 1\rightarrow3 & 2\rightarrow3\\
\hline
0.1 & 1.91 & 2.02  \\
0.3 &  2.75 & 3.04 \\
0.5 &  3.30 & 3.75 \\
0.7 &  3.74 & 4.30 \\
0.8 &  3.94 & 4.54 \\
0.9 &  4.13 & 4.76 \\
0.95 & 4.23 & 4.88 \\
\hline
1.00 & 4.32 & 4.97 \\
\hline
\multicolumn{3}{c}{\ }\\
\hline
\multicolumn{3}{|c|}{\cellcolor[gray]{0.3}\color{white}\textrm{Scaled path times: Tail probabilities}}\\
\hline
\rho & 1\rightarrow3 & 2\rightarrow3\\
\hline
0.1 & 0.002 & 0.002  \\
0.3 &  0.006 & 0.008 \\
0.5 &  0.008 & 0.013 \\
0.7 &  0.010 & 0.018 \\
0.8 &  0.010 & 0.020 \\
0.9 &  0.011 & 0.022 \\
0.95 & 0.011 & 0.023 \\
\hline
1.00 & 0.011 & 0.023\\
\hline
\end{array}
\]
\caption{Numerical results for Example 2.}
\label{convergencetoHT2}
\end{table}

\begin{figure}[h]
\begin{center}
\parbox{0.7\textwidth}{\centering
\includegraphics[width=\linewidth]{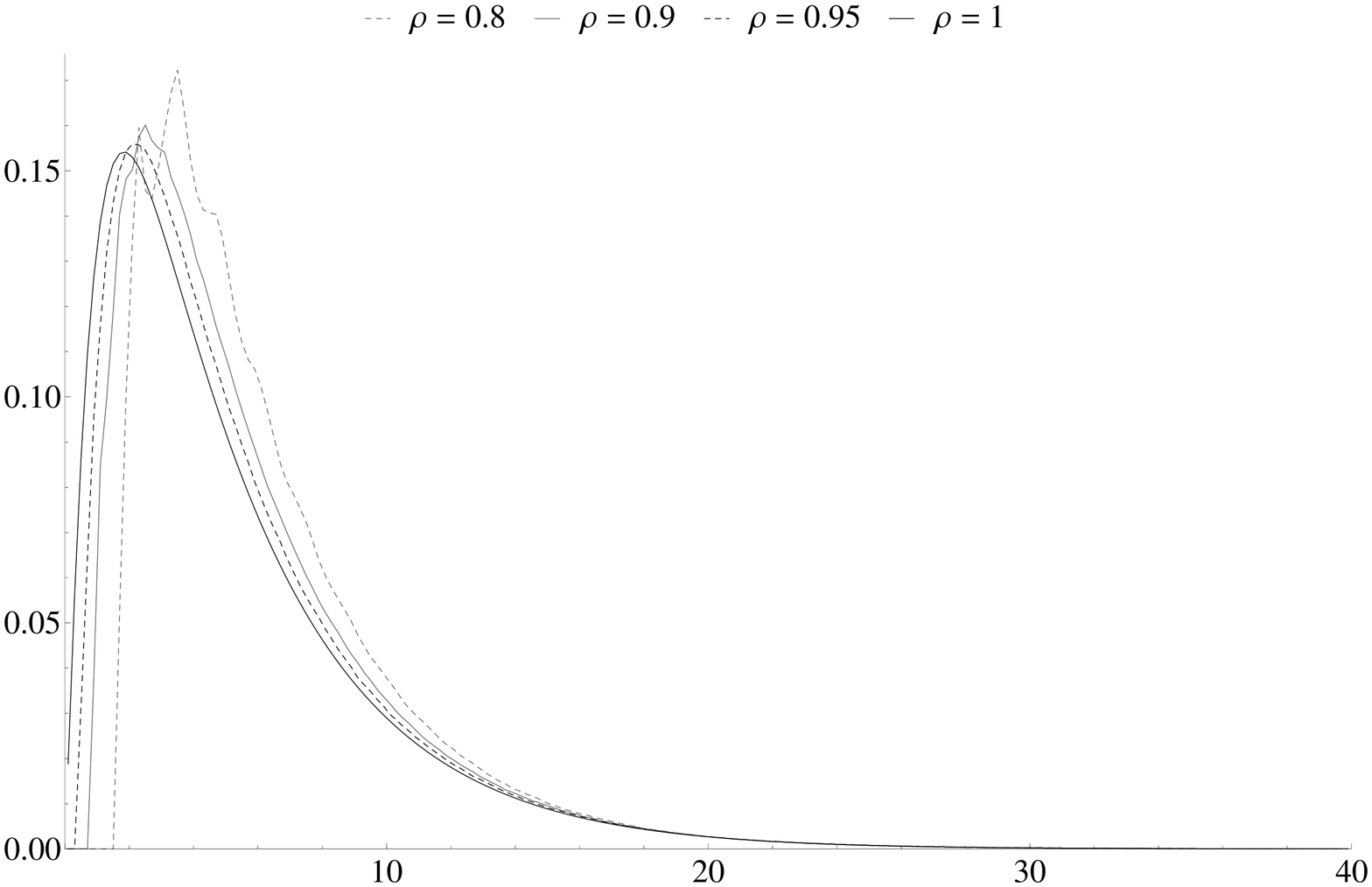}\\
$1\rightarrow3$
}\\
\parbox{0.7\textwidth}{\centering
\includegraphics[width=\linewidth]{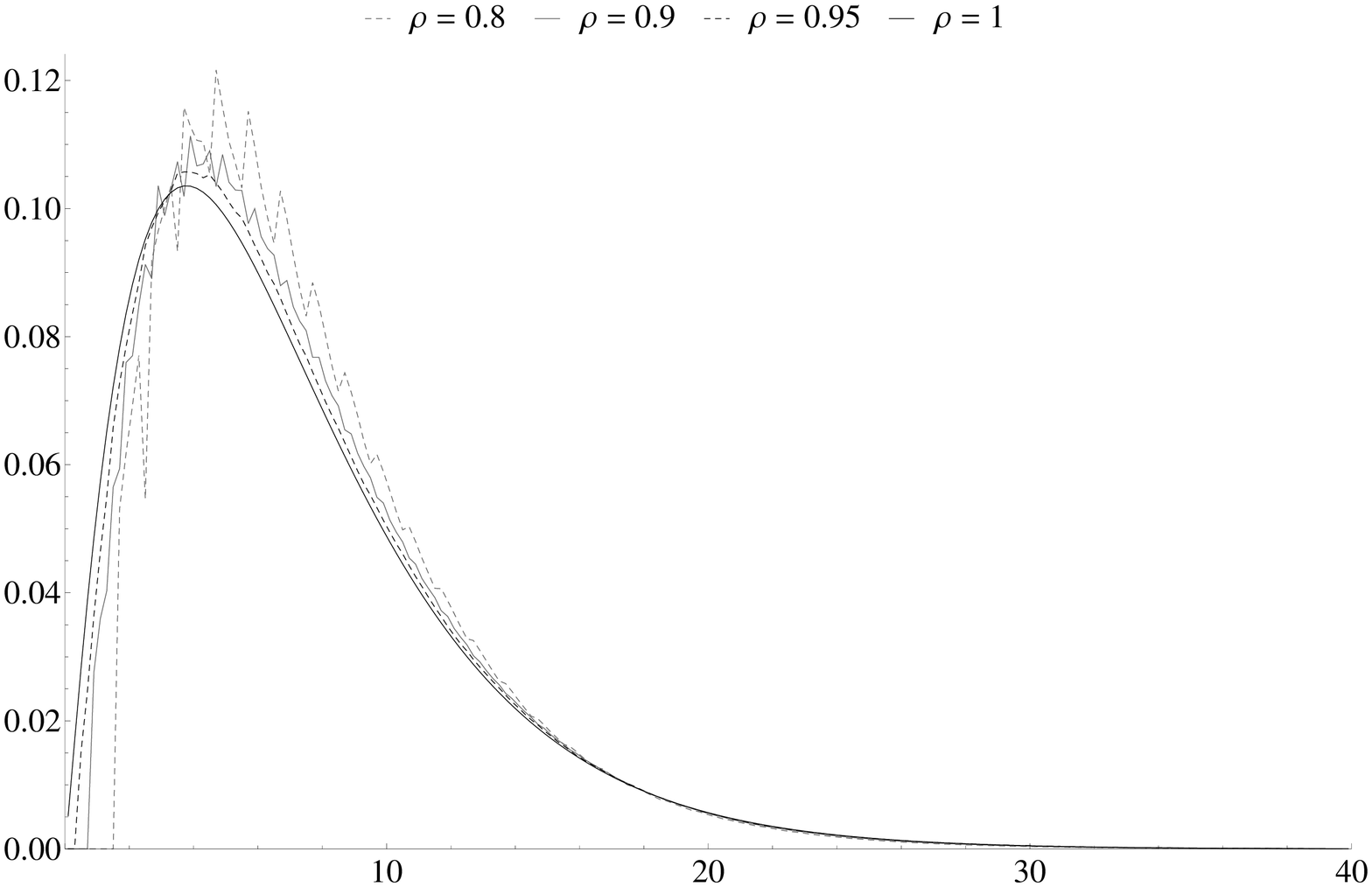}\\
$2\rightarrow3$
}
\end{center}
\caption{The probability densities for the two paths of the second numerical example. The results for $\rho<1$ are obtained using simulation. The results for $\rho=1$ are obtained using the analytical results from Section \ref{sect:htap}.}
\label{convergencetoHTfigs2}
\end{figure}

\paragraph{Accuracy of the approximation.} Table \ref{accuracyApprox2} shows the approximated means  using the  path-time approximation discussed in Section \ref{sect:approx} for both paths. Again the quality of the approximation is excellent for all possible loads and all paths: The relative approximation error is less than $2\%$ for all values of $\rho$.

We have also tested the accuracy of the approximation for different interarrival-time distributions, with squared coefficient of variation (SCV) equal to respectively $\frac12$ and 2. In the first case we have fitted a  mixed Erlang distribution, and in the second case a hyperexponential distribution. In these cases, the approximation still gives good results but slightly less accurate than for the case with Poisson arrivals: when the SCV equals 2, the relative approximation error is less than $4\%$ for all values of $\rho$. When the SCV is equal to $\frac12$, the relative error is less than $9\%$. As expected, the relative difference between the simulated and approximated values is biggest when $\rho$ lies between 0.3 and 0.7.

\begin{table}[h]
\begin{center}
\[
\begin{array}{|c|cc|}
\hline
\multicolumn{3}{|c|}{\cellcolor[gray]{0.3}\color{white}\textrm{Approximated mean scaled path times}}\\
\hline
\rho & 1\rightarrow3 & 2\rightarrow3\\
\hline
0.1 &   9.52 & 9.72 \\
0.3 &   8.56 & 9.16 \\
0.5 &   7.60 & 8.60 \\
0.7 &   6.64 & 8.04 \\
0.8 &   6.16 & 7.76 \\
0.9 &   5.68 & 7.48 \\
0.95 &  5.44 & 7.34 \\
\hline
1.00 & 5.20 & 7.20 \\
\hline
\end{array}
\]
\end{center}
\caption{Approximation results for Example 2. For the two paths, the means  of the (scaled) path-time approximations are given. }
\label{accuracyApprox2}
\end{table}

\subsection{Example 3: A file-server application}

This application is taken from Sidi et al. \cite{sidi2}, who consider a token ring network with one file-server and $K$ workstations, that transmit file requests to the file-server, which
in turn replies to the different stations by sending back the files they requested. The
performance measure of interest is the response time of a file request, which
is the time from the request generation by the workstation (being queued at that
time at the output queue of the station, awaiting its turn to be transmitted) until the
file arrives back at the station. We can model this system as a roving server network with $K+1$ queues. External customers arrive at $Q_1, \dots, Q_K$ and are routed to $Q_{K+1}$ after their service completion. Once served at $Q_{K+1}$ the customer leaves the system. We are interested in the path time $W_{i,K+1}$ for $i=1,\dots,K$. In our numerical example we take $N=K+1=11$, identical arrival rates at the $K$ workstations and no external arrivals at the file server, i.e. $\lambda_{K+1}=0$. Since arrival processes in this application area tend to exhibit high variation, we assume that the interarrival times follow a hyperexponential distribution with balanced means (see, e.g., \cite{tijms94}) and squared coefficient of variation equal to 4. The routing probabilities are all 0 except $p_{i,K+1}=1$ for $i=1,\dots,K$. The service times $B_1, \dots, B_K$ are all deterministic with value $0.1$, and the service times  at $Q_{K+1}$ are exponentially distributed with mean 1. The switch-over times in this kind of system are typically very small compared to the service times, so we take exponentially distributed switch-over times with mean $0.01$. The service discipline is gated at all queues.

We have chosen to highlight only a few typical results for this example, in Table \ref{resultsExample2} and Figure \ref{resultsExample2figs}, where
the means and standard deviations of the scaled path times are shown. The values for $\rho=0,9, 0.95, 0.98$ have been obtained by simulation, whereas the values for $\rho=1$ are obtained using the analytical results from Section \ref{sect:htap}.
The most typical feature is that  the densities of the waiting times and path times exhibit oscillating behaviour for smaller values of $\rho$ (see Figure \ref{resultsExample2figs}). Most interestingly, this is not caused by simulation inaccuracy, but by the fact that the path times densities are (almost completely) concentrated on a discrete set of values for smaller loads due to the combination of small switch-over times, \emph{deterministic} service times and \emph{deterministic} routing. This effect disappears when the loads increase, but as a consequence the convergence to the limiting HT distribution is not as fast as in the previous example. Nevertheless, despite the anomalous behaviour of the system's performance for small loads, the derived asymptotic turns out to be exact again.

\begin{table}[t]
\[
\begin{array}{|c|cccccccccc|}
\hline
\multicolumn{11}{|c|}{\cellcolor[gray]{0.3}\color{white}\textrm{Scaled path times: Means}}\\
\hline
\rho & 1\rightarrow 11 & 2\rightarrow11 & 3\rightarrow11 & 4\rightarrow11 & 5\rightarrow11 & 6\rightarrow11 & 7\rightarrow11 & 8\rightarrow11 & 9\rightarrow11 & 10\rightarrow11\\
\hline
0.9  &  1.25 & 1.38 & 1.51 & 1.64 & 1.77 & 1.89 & 2.02 & 2.14 & 2.26 & 2.37 \\
0.95 &   1.39 & 1.55 & 1.71 & 1.86 & 2.02 & 2.17 & 2.31 & 2.46 & 2.60 & 2.73 \\
0.98 &   1.47 & 1.65 & 1.82 & 2.00 & 2.17 & 2.33 & 2.50 & 2.66 & 2.81 & 2.96 \\
\hline
1.00    & 1.53 & 1.73 & 1.92 & 2.12 & 2.32 & 2.51 & 2.71 & 2.91 & 3.10 & 3.30 \\
\hline
\multicolumn{11}{c}{\ }\\
\hline
\multicolumn{11}{|c|}{\cellcolor[gray]{0.3}\color{white}\textrm{Scaled path times: Standard deviations}}\\
\hline
\rho & 1\rightarrow 11 & 2\rightarrow11 & 3\rightarrow11 & 4\rightarrow11 & 5\rightarrow11 & 6\rightarrow11 & 7\rightarrow11 & 8\rightarrow11 & 9\rightarrow11 & 10\rightarrow11\\
\hline
0.9  &  1.44 & 1.57 & 1.70 & 1.83 & 1.95 & 2.06 & 2.17 & 2.28 & 2.38 & 2.47 \\
0.95 &   1.58 & 1.72 & 1.85 & 1.98 & 2.11 & 2.23 & 2.35 & 2.45 & 2.56 & 2.65 \\
0.98 &   1.64 & 1.78 & 1.92 & 2.05 & 2.18 & 2.30 & 2.42 & 2.53 & 2.63 & 2.72 \\
\hline
1.00    &  1.74 & 1.90 & 2.08 & 2.25 & 2.43 & 2.61 & 2.79 & 2.97 & 3.16 & 3.34 \\
\hline
\end{array}
\]
\caption{Numerical results for Example 3. }
\label{resultsExample2}
\end{table}

\begin{figure}[h]
\begin{center}
\includegraphics[width=0.5\textwidth]{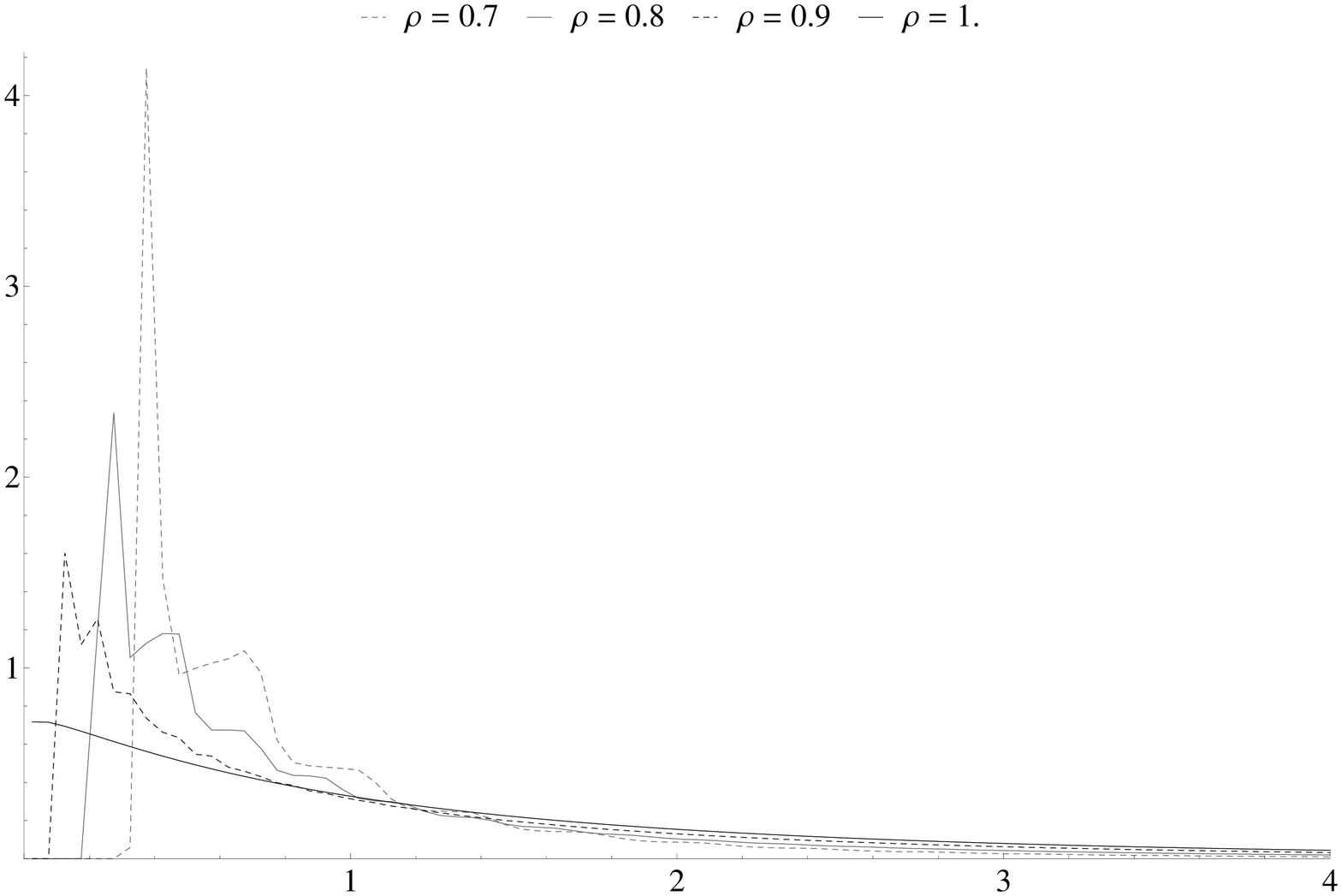}\\
$1\rightarrow 11$\\

\bigskip
\includegraphics[width=0.5\textwidth]{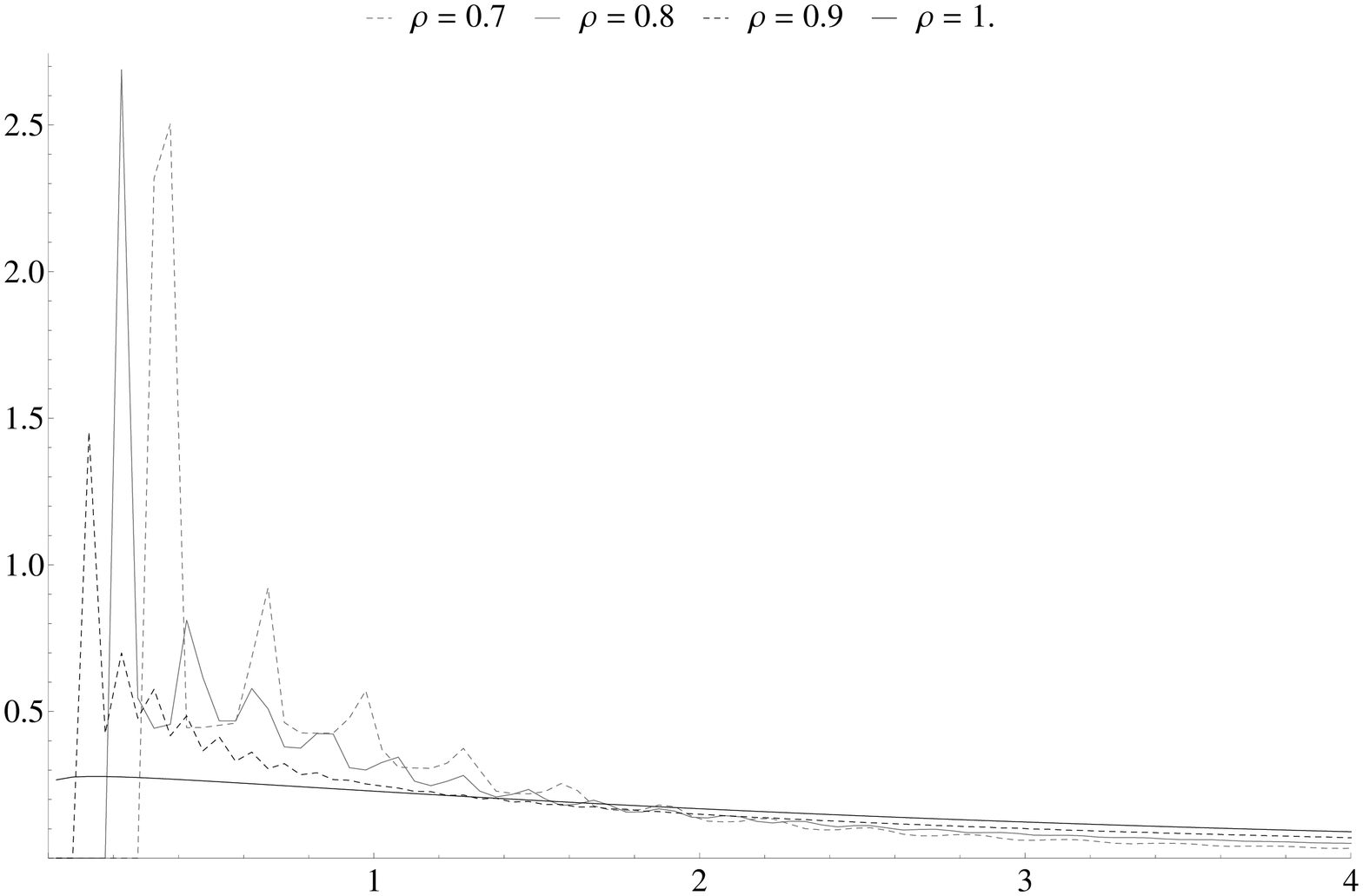}\\
 $10\rightarrow 11$\\
\end{center}
\caption{Numerical results for Example 3. The two figures show the densities of the scaled path-time distributions of the paths $1\rightarrow 11$ and $10\rightarrow 11$.}
\label{resultsExample2figs}
\end{figure}

\subsection{Example 4: A production system with a joint packaging queue and random yield}

In the fourth example we look at a production system with two types of products and a joint packaging queue and random yield. A set-up which is typically observed in many practical applications. Product type $A$ requires three operations, at queue $1$, queue $3$ and queue $5$, before joining the packaging queue $7$, whereas the other product type $B$ visits queue $2$, queue $4$ and queue $6$ before it is sent to the joint packaging queue.  Every production step fails with probability $p=0.01$, after which the product is sent to the beginning of the production process, i.e. queue $1$ for product $A$ and queue $2$ for product $B$. Packaging is, however, always successful. We assume that all queues are served exhaustively.

We study a system with external arrivals at queue $1$ and queue $2$ with intensity $0.12$ and $0.04$, respectively.  Furthermore we assume Erlang distributed arrivals with SCV $0.25$, Erlang distributed service times with means  1 (for the regular queues) and 2 (for the packaging queue) and SCV $0.5$, and switch-over times with constant value $5$.

For two possible paths, %the means, standard deviations and tail probabilities of the (scaled) path-time distributions are given in Table \ref{convergencetoHT4} and
the simulated scaled path-time distributions are depicted in Figure \ref{resultsExample4figs}. The analytically computed limiting distribution at $\rho=1$ is included as well. As in Example 1, due to the relatively large switch-over times we see that for loads larger than $0.8$ the scaled path-time distributions almost coincide with the limiting distributions.

Lastly, we show an illustration how the performance of the system depends on the rework probability $p$. Therefore, we vary $p$ while keeping all the other parameters fixed. Note that increasing $p$ will also increase the total workload of the system. When considering the load of the system as a function of $p$, the stability condition can be rewritten to $p < 0.1558$. Figure \ref{example4fig2} shows the mean waiting time at queue $1$ as function of this probability; this figure clearly indicates that the control of the rework probability is crucial for obtaining satisfactory system performance. Moreover, it shows that the approximation for the mean waiting time in $Q_1$ is extremely accurate. This is as expected, since $p=0$ corresponds to a system with total load $\rho=0.8$, and we have concluded before that the approximation is very accurate for $\rho>0.8$. We emphasise that the approximations given in this paper are closed-form expressions, once all parameter values have been substituted. For example, the approximation for $\E[W_1]$ simplifies to \eqref{ewapproxp}. As a consequence, the approximation, is very suitable for optimisation purposes.

\begin{align}
\E[W_1^\textit{approx}]&=\left({1088 p^6-6374 p^5+15814 p^4-21412 p^3+16492 p^2-6406 p+670}\right)^{-1}\nonumber\\
&\times\left(192 p^{11}-2168 p^{10}+16940 p^9-99647 p^8+378503 p^7-880840 p^6+1234922 p^5\right.\nonumber\\
&\quad\left.-952526 p^4+202040 p^3+289849
   p^2-244631 p+52916\right).
   \label{ewapproxp}
\end{align}

\begin{figure}[h]
\begin{center}
\includegraphics[width=0.5\textwidth]{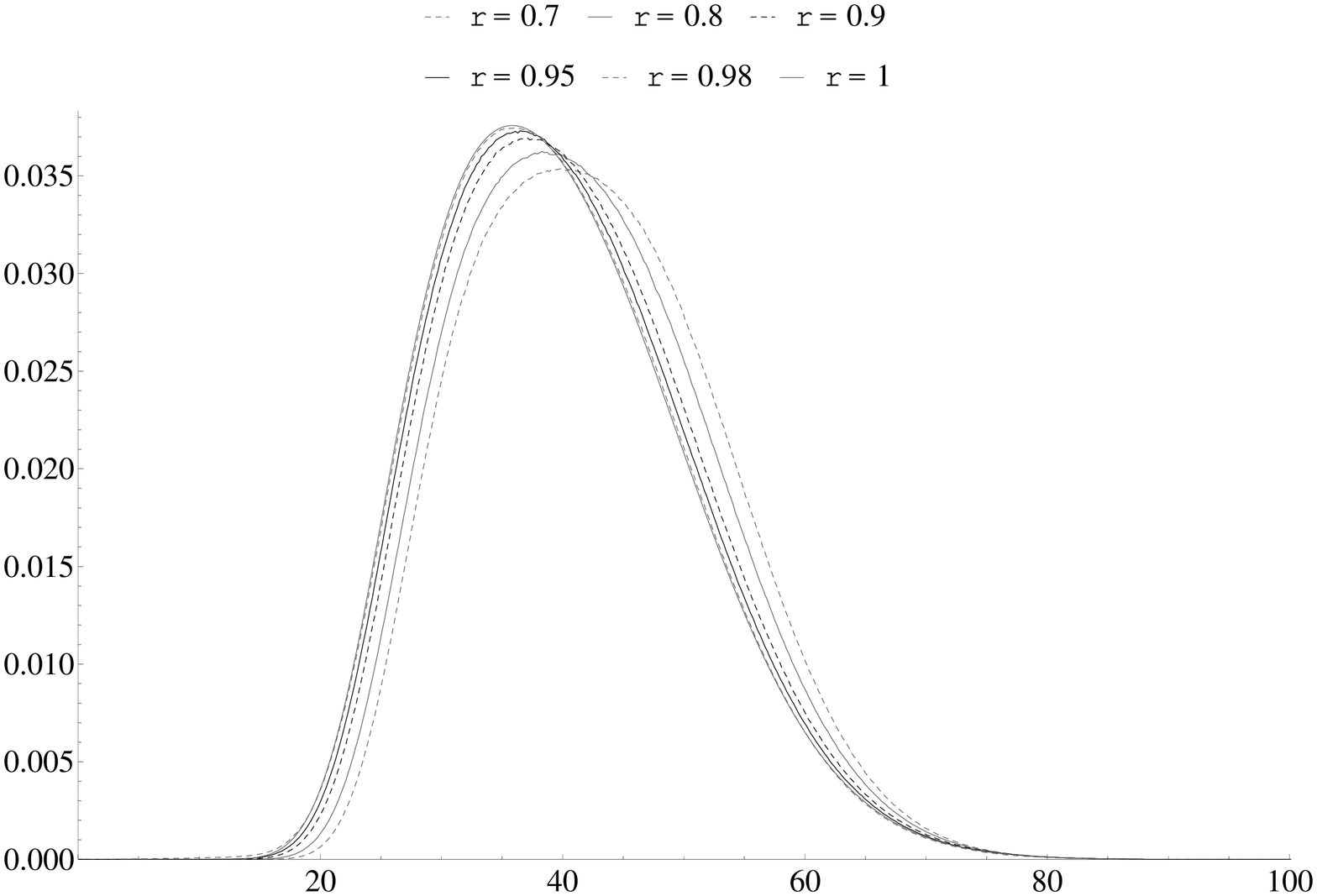}\\
$1\rightarrow 3\rightarrow 5\rightarrow 7$\\

\bigskip
\includegraphics[width=0.5\textwidth]{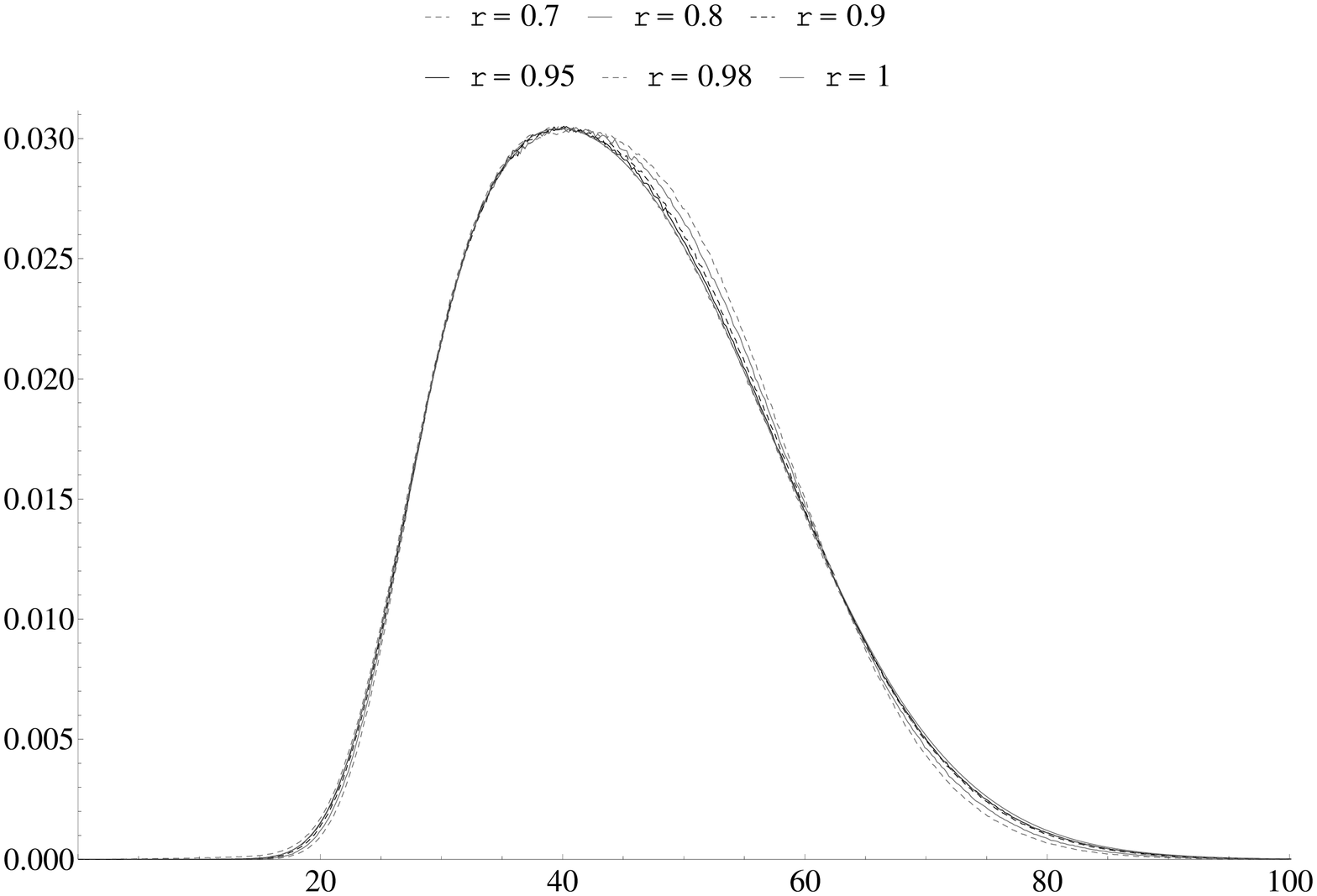}\\
 $2\rightarrow 4\rightarrow 6\rightarrow 7$\\
\end{center}
\caption{Numerical results for Example 4. The two figures show the densities of the scaled path-time distributions of the paths $1\rightarrow 3\rightarrow 5\rightarrow 7$ and  $2\rightarrow 4\rightarrow 6\rightarrow 7$.}
\label{resultsExample4figs}
\end{figure}

\begin{figure}[h]
\begin{center}
\includegraphics[width=0.5\textwidth]{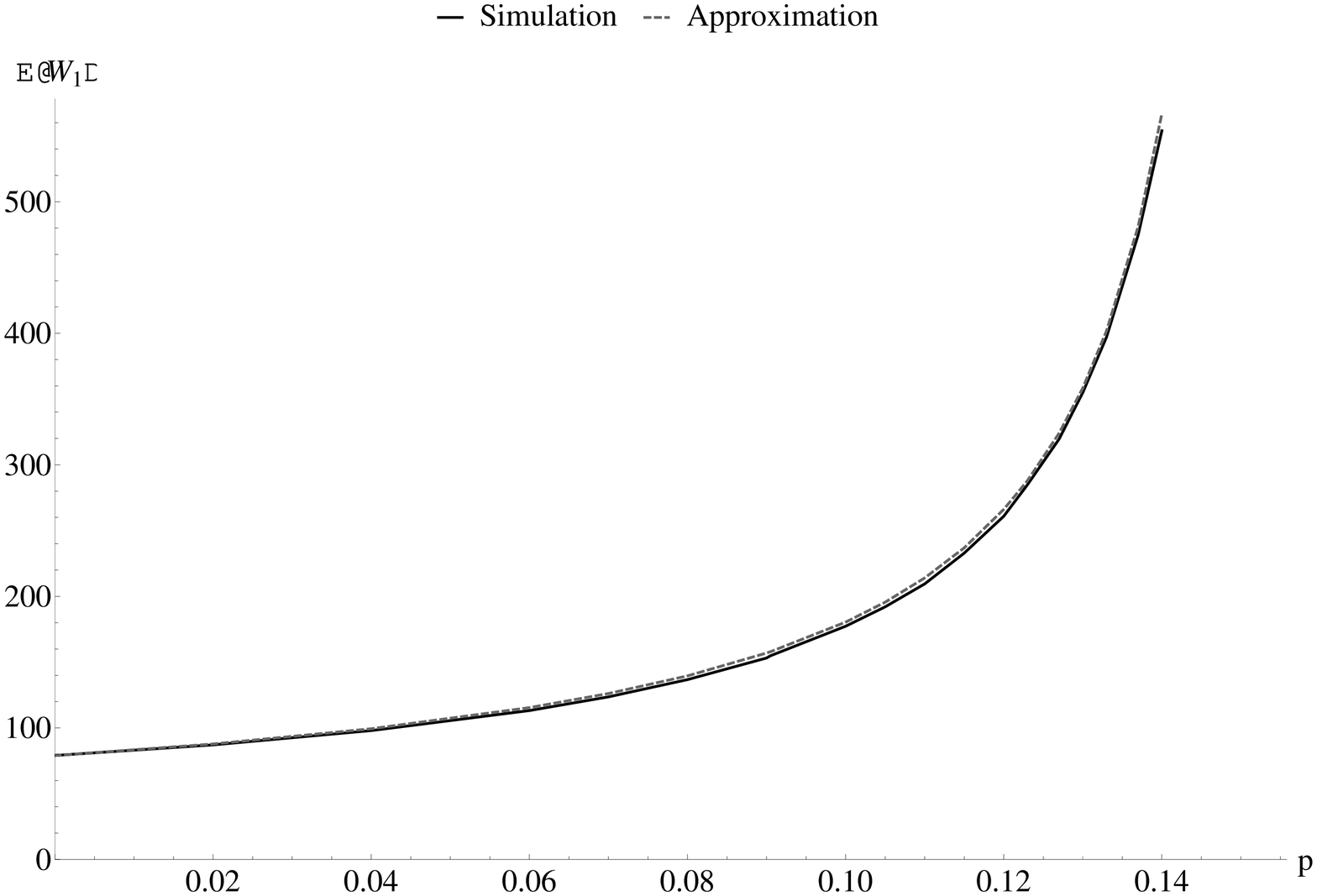}
\end{center}
\caption{The mean waiting time in the first queue as a function of the rework probability $p$, for the model in Numerical Example 4.}
\label{example4fig2}
\end{figure}

We can therefore conclude that, although the four analyzed cases come from completely different fields of application, have dissimilar parameter settings and differ significantly in system's behaviour, the derived HT asymptotics are shown to accurately describe the performance in all of these cases. Moreover, in all of these models the key performance metric is obviously the total time spent in the system by an
arbitrary customer, i.e., the path time. We hope that this observation illustrates the general nature of our framework, which extends and unifies the HT analysis of many queueing models.

\section{Conclusions and suggestions for further research}\label{sect:conclusions}

In the current paper, we have analyzed a queueing network with customer routing, where a single shared server serves the queues in a cyclic order. We have not only studied the waiting time of a customer at a certain queue, but also the path time (the total time spent in the system by an arbitrary customer traversing a specific path). The main complicating factor in this analysis is the routing of customers, which leads to non-renewal arrival processes at the queues and to strong interdependence of the waiting times at the queues. These factors prohibit an exact explicit analysis with closed-form expressions and, therefore, it is natural to resort to asymptotic estimates. That is, we have obtained easily computable expressions for both the waiting-time and path-time distribution in HT. Combining these HT asymptotics with newly developed LT limits leads to highly accurate approximations for the mean performance measures for the whole range of load values. The strength of this refined heavy-traffic approximation lies in its simplicity, which opens up interesting possibilities for optimization of the system performance with respect to the routes of the customers.

\section*{Acknowledgements}

The authors are grateful to the anonymous referees, who have provided valuable comments resulting in improved readability of the current manuscript.

%\small
%\singlespacing
\bibliographystyle{abbrvnat}
%\bibliography{roving}

\appendix
\section*{Appendix}
\section{Proof for Poisson arrivals}
\newcommand{\beq}{\begin{equation}}
\newcommand{\eeq}{\end{equation}}

In this appendix we present a proof for Conjecture \ref{conjecturescaledqueuelengthHT} for the special case of Poisson arrivals, reformulated in the theorem below. We conclude the appendix with a short discussion.

\begin{theorem}\label{theoremscaledqueuelengthHTpoisson}
Assume that external customers arrive at $Q_i$ according to a Poisson process with rate $\lambda_i$. As $\rho \uparrow 1$, the scaled queue length $(1-\rho)L_{i}$ converges in distribution to the product of two independent random variables. The first has the same distribution as $\mathcal{L}^\textit{fluid}_{i}$, and the second random variable $\Gamma$ has the same distribution as the limiting distribution of the scaled length-biased cycle time, $(1-\rho)\bm{C_i}$. For $i=1,\dots,N; k=i,\dots,i+N-1$, and $\rho\uparrow 1$,
\begin{equation}
(1-\rho)L_{i} \limdist \Gamma \times \mathcal{L}^\textit{fluid}_{i,k}\qquad \textrm{ w.p. }\hat\rho_{k},
%\Big(
%1+\sum_{j=i-N}^{k-1}\hat\rho_j(\hat\gamma_{i,j}b_i-1)+U_k\hat\rho_k(\hat\gamma_{i,k}b_i-1)
%\Big)
%\textrm{ w.p. }\pi_{i,k},
\label{LiHT}\\
\end{equation}
where $\Gamma$ is a random variable having a Gamma distribution with parameters $\alpha+1$ and $\delta\mu$,
and $\mathcal{L}^\textit{fluid}_{i,k}$ is the ``standardised'' number of a type-$i$ particles in the fluid model during $V_k$, introduced below \eqref{LiFluid}, independent of $\Gamma$.
The parameters of the Gamma distribution, in the case of Poisson arrivals, are defined as
\[
\alpha = r\delta/\bres, \mu=\bres,
\]
with $\delta$ as defined in Definition \ref{deltalemma}.
\end{theorem}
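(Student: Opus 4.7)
The plan is to adapt van der Mei's unifying framework \cite{RvdM_QUESTA} for branching-type polling models with Poisson arrivals to the present roving-server setting. The first ingredient is Resing's \cite{resing93} observation, noted already in Section \ref{sect:model}, that under Poisson arrivals the queue-length vector $\bm{L}^{(1)}=(L_1^{(1)},\dots,L_N^{(1)})$ at successive polling instants of $Q_1$ forms a multitype branching process with immigration. With routing, the offspring distribution of a type-$i$ particle is enriched: in addition to the Poisson offspring generated during its service, it yields one type-$j$ child with probability $p_{i,j}$. I would write down the joint probability generating function $F_1(\bm{z})$ of $\bm{L}^{(1)}$ via the standard recursion $F_1(\bm{z})=\sigma(\bm{z})F_1(h(\bm{z}))$, where $\sigma$ is the immigration transform corresponding to the total switch-over time and $h=h_N\circ\cdots\circ h_1$ is the composition of the per-queue branching operators (each $h_i$ encoding gated or exhaustive service plus routing).

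Second, I would apply the HT scaling to this functional equation. Substituting $\bm{z}=\exp(-s(1-\rho)\bm{\omega})$ and Taylor-expanding around $\bm{z}=\bm{1}$ to second order, the operator $h$ reduces to leading order to the mean matrix $M$ of the branching process, whose Perron-Frobenius eigenvector determines the direction along which $(1-\rho)\bm{L}^{(1)}$ collapses as $\rho\uparrow 1$. The second-order terms produce a Riccati-type equation of the same form as in \cite{coffman98,olsenvdmei03,RvdM_QUESTA}, whose solution yields that $(1-\rho)\bm{L}^{(1)}$ converges in distribution to a deterministic vector (the visit-beginning marginals of the fluid model of Subsection 3.1) multiplied by a single Gamma random variable with shape $\alpha=r\delta/\bres$ and rate $\delta\mu$, with $\mu=\bres$ as in the Poisson case. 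Using the one-to-one correspondence between visit beginnings of $Q_1$ and of any other $Q_j$ provided by the fluid trajectory in Figure \ref{fluidHT}, this extends to the joint distributions at all polling instants.

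Third, I would pass from polling-instant distributions to an arbitrary time epoch by invoking the HTAP time-scale separation established earlier in Section \ref{sect:htap}. Conditional on the system being in visit period $V_k$ (which occurs with time-average probability $\hat\rho_k$, since the fraction of time in $V_k$ tends to $\hat\rho_k$), and on the elapsed fraction of $V_k$ (which is Uniform$[0,1]$ and asymptotically independent of the cycle length by the HTAP), the number of type-$i$ customers in the fluid model is distributed as $\mathcal{L}^{\textit{fluid}}_{i,k} c$, cf.~\eqref{LikDist}. PASTA identifies the distribution seen at a Poisson arrival with the time-average distribution, and the relevant cycle length seen from an arbitrary epoch is the length-biased cycle time $\bm{C_i}$, whose scaled version converges to a Gamma random variable with shape $\alpha+1$ and rate $\delta\mu$ (Conjecture \ref{conjecturescaledcycleHT} in this Poisson case being provable from the workload diffusion). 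Combining the product-form joint limit from step two with this length-biased cycle-time limit, and exploiting the independence of $\Gamma$ and $\mathcal{L}^{\textit{fluid}}_{i,k}$ that is a direct consequence of the HTAP, yields the claimed factorisation $(1-\rho)L_i\limdist \Gamma\times\mathcal{L}^{\textit{fluid}}_{i,k}$ with probability $\hat\rho_k$.

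The main obstacle is step two. Two technical points need care: (i) the Perron-Frobenius analysis of the mean matrix $M$, which now incorporates the routing matrix $(p_{i,j})$, must yield a left eigenvector whose entries are proportional to $\hat\gamma_i\btilde_i$ so that the collapse direction coincides with the visit-beginning marginals read off from \eqref{fluidworkatvisitbeginnings}; this amounts to checking that the set of linear equations \eqref{eq02} for $\btilde_i$ and the arrival-rate equations defining $\hat\gamma_i$ together diagonalise $M$ along the expected direction. (ii) One needs uniform control on the second-order Taylor remainder as $\rho\uparrow 1$, which is delicate because the branching operator is iterated $N$ times per cycle so that errors compound; here the finite-second-moment assumption on service and interarrival times, together with the standard contraction argument in \cite{RvdM_QUESTA}, should close the argument. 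The extension from visit-beginning to arbitrary epochs (step three) is then essentially bookkeeping, since the fluid calculations of Subsection 3.1 have already done the heavy lifting.
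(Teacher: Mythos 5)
Your first two steps are essentially the paper's route: the appendix also characterises the embedded process at polling instants of $Q_1$ as an MTBP with immigration, with exactly the routing-enriched offspring function you describe (Theorem \ref{thm:offspringimmigration}), and then extracts the Gamma limit along the Perron--Frobenius direction. The one difference is that the paper does not re-derive a Riccati-type equation from the scaled functional equation; it instead invokes Quine's limit theorem for critical MTBPs with immigration (Property \ref{htbranchingprop}) and reduces the work to computing the mean matrix, its left and right eigenvectors at $\rho=1$, the mean immigration vector, the scaling constant $A$, and the derivative $\xi'(1)=1/\delta$ (Steps 2--7). Your route would also work but is heavier. A small slip in your point (i): the collapse direction for the queue-length vector at polling instants of $Q_1$ is not proportional to $\hat\gamma_i\btilde_i$ but to $u_i=\hat\lambda_i\sum_{j\geq i}\hat\rho_j+\sum_{j\geq i}\hat\gamma_j p_{j,i}$ (Lemma \ref{eigenvectorslemma}), i.e.\ the fluid marginal at the beginning of a visit to $Q_1$ from \eqref{fluidworkatvisitbeginnings}; $\hat\gamma_i\btilde_i$ is the workload at the start of $Q_i$'s \emph{own} visit, which is a different polling instant.

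The genuine gap is in your third step. You pass from polling-instant distributions to the arbitrary-epoch distribution by invoking the HTAP time-scale separation (plus PASTA and length-biasing). But the HTAP is precisely the conjectural ingredient that the appendix proof is designed to avoid: if you assume it, you have not proved the theorem, you have restated Conjecture \ref{conjecturescaledqueuelengthHT}. The paper instead uses the \emph{exact} steady-state generating-function relations of Sidi, Levy and Fuhrmann (Equations \eqref{Li}--\eqref{Yik}, valid for every $\rho<1$ under Poisson arrivals), which express $\E[z^{L_i}]$ in terms of the PGFs at visit beginnings and completions; substituting $z=e^{-\omega(1-\rho)}$, Taylor-expanding near $\rho=1$, and matching the resulting LSTs against the Gamma-times-uniform identity \eqref{gammauniform} yields the product form rigorously (Steps 8--9). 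This is not ``bookkeeping'': it is the step that replaces the averaging-principle heuristic by an honest limit of exact formulas, and it is where the uniform mixing weights $\hat\rho_k$ and the length-biased shape parameter $\alpha+1$ actually emerge from the algebra rather than being imposed. To close your argument you would need to supply this exact intermediate relation (or an equivalent supplementary-variable computation) in place of the appeal to the HTAP. PASTA, incidentally, is not needed: the statement concerns the time-stationary queue length, not the queue length seen by arrivals.
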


The proof of Theorem \ref{theoremscaledqueuelengthHTpoisson} is structured in the form of 9 small steps. Steps 1 -- 7 involve finding the limiting scaled joint queue-length distribution at visit beginnings and completions. We rely heavily on existing limiting results for polling systems and branching processes, exploiting the similarities between these models and roving server networks, closely following the framework developed in \cite{RvdM_QUESTA}. The remaining steps follow the approach in \cite{boonvdmeiwinandsRovingQuesta} and \cite{sidi2}, expressing queue lengths at arbitrary moments in terms of the queue-length distributions at visit beginnings, in heavy traffic. We start by giving some preliminary results, required for the remainder of the proof.

\paragraph{Preliminaries.}

As shown in \cite{boonvdmeiwinandsRovingQuesta,sidi2} the joint queue-length process at visit beginnings constitutes an $N$-dimensional multi-type branching process (MTBP) with immigration. As preliminaries, we give a brief overview of existing limiting results on critical MTBPs, required for the remainder of this proof. We start by introducing some additional notation.
An $N$-dimensional vector ${\ul v}$ has components $(v_1,\ldots,v_N)$, and we denote $\vert {\ul v} \vert:= \sum_{i=1}^N v_i$. Finally, $I_E$ is the indicator function on the event $E$.

Let ${\bm Z}=\{{\ul Z}_n, n=0,1,\ldots\}$ be an $N$-dimensional multi-type branching process, where ${\ul Z}_n=(Z_n^{(1)},\ldots,Z_n^{(N)})$ is an
$N$-dimensional vector denoting the state of the process in the $n$-th generation. %, and where $Z_n^{(i)}$ is the number of type-$i$ particles in the $n$-th generation, for $i=1,\ldots,M$, $n=0,1,\ldots$.
The MTBP is defined by its offspring generating functions and its immigration functions. The one-step offspring generating function is denoted by
$f({\ul z})=(f^{(1)}({\ul z}),\ldots,f^{(N)}({\ul z}))$, with ${\ul z}=(z_1,\ldots,z_N)$, and %where for $\vert z_k \vert \leq 1~(k=1,\ldots,N)$,
\beq
\label{eq03}
f^{(i)}({\ul z})=\sum_{j_1,\ldots,j_N \geq 0} p^{(i)}(j_1,\ldots,j_N) z_1^{j_1}\cdots z_N^{j_N},
\eeq
where $p^{(i)}(j_1,\ldots,j_N)$ is the probability that a type-$i$ particle produces $j_k$ particles of
type $k~(i, k=1,\ldots,N)$.
The immigration function is denoted as follows: %For $\vert z_k \vert \leq 1~(k=1,\ldots,N)$,
\beq
\label{eq04}
g({\ul z})=\sum_{j_1,\ldots,j_N \geq 0} q(j_1,\ldots,j_N) z_1^{j_1} \cdots z_N^{j_N},
\eeq
where $q(j_1,\ldots,j_N)$ is the probability that a group of immigrant consists of $j_k$ particles of type $k~(i, k=1,\ldots,N)$.
Denote the {\it mean immigration vector} by
\beq
\label{eq05}
{\ul g}:=(g_1,\ldots,g_N), ~
{\rm where~}g_i:={\partial g({\ul z}) \over \partial z_i} \vert_{{\ul z}={\ul 1}}~~(i=1,\ldots,N),
\eeq
and the {\it mean offspring matrix}, or simply {\it mean matrix},  by
\beq
\label{eq06}
{\bm{M}}=\left( m_{i,j} \right),~~~{\rm with}~m_{i,j}:={ \partial f^{(i)}({\ul z}) \over \partial z_j}
\vert_{{\ul z}={\ul 1}}~~(i,j=1,\ldots,N).
\eeq
Thus, for a given type-$i$ particle, $m_{i,j}$ is the mean number of type-$j$ ``children''
it has in the next generation. Similarly, for a type-$i$ particle, the second-order derivatives are denoted
by the matrix
\beq
\label{eq07}
{\bm K}^{(i)} = \left( k_{j,k}^{(i)} \right),~~{\rm with~}
k_{j,k}^{(i)} := { \partial^2 f^{(i)}({\ul z}) \over \partial z_j \partial z_k }
\vert_{{\ul z}={\ul 1}},~~(i,j,k=1,\ldots,N).
\eeq
Denote by
${\ul v}=(v_1,\ldots,v_N)$ and ${\ul w}=(w_1,\ldots,w_N)$ the left and right eigenvectors corresponding to the largest
real-valued, positive eigenvalue $\xi$ of ${\bm M}$, commonly referred to as the {\it maximum eigenvalue} (cf., e.g., \cite{Athreya1}), normalized such that
\beq
\label{eq08}
{\ul v}^{\top} {\ul 1} = {\ul v}^{\top} {\ul w} = 1.
\eeq
The following conditions are necessary and sufficient conditions for the ergodicity of the process ${\bm Z}$ (cf. \cite{resing93}):
$\xi<1$ and
\beq
\label{eq09}
\sum_{j_1+\cdots+j_N>0} q(j_1,\ldots,j_N) {\rm log}(j_1+\cdots+j_N) < \infty.
\eeq
%Following standard branching-process terminology the process ${\bf Z}$
%is called sub-critical if $\xi<1$, critical if $\xi = 1$ and super-critical if $\xi>1$.\\
%\ \\
Note that $\xi$ plays a role similar to $\rho$ in the roving server network. The hat-notation introduced in Section \ref{sect:model} is also used here to indicate that $x$ is {\it evaluated at $\xi=1$}. Moreover, for $\xi \geq 0$ let
\beq
\label{eq10}
\pi_0(\xi):=0,~~{\rm and}~~\pi_n(\xi):=\sum_{r=1}^n \xi^{r-2},~~n=1,2, \ldots.
\eeq

Quine \cite[Theorem 4]{quine} derives the following property for critical MTBPs.
\begin{property}
\label{htbranchingprop}
{\it
Assume that all derivatives of $f({\ul z})$ through order two exist at ${\ul z}={\ul 1}$ and that
$0 < g_i < \infty~(i=1,\ldots,N)$. Then
\beq
\label{eq13}
{1 \over \pi_n(\xi)}
\left(
\begin{array}{c}
Z_n^{(1)}\\
\vdots
\\
Z_n^{(N)}
\end{array}
\right)
\limdist
A
\left(
\begin{array}{c}
\hat{v}_1\\
\vdots
\\
\hat{v}_N
\end{array}
\right)
\Gamma(\alpha, 1)~~{\it as}~~(\xi, n)  \rightarrow (1,\infty),
\eeq
where $\hat{\ul v}=(\hat{v}_1,\ldots,\hat{v}_N)$ is the normalized the left eigenvector of $\hat{\bm M}$,
and where $\Gamma(\alpha,1)$ is a gamma-distributed random variable with scale parameter 1 and shape parameter}
\beq \label{A}
\alpha := {1 \over A} \hat{\ul g}^{\top} \hat{\ul w} = {1 \over A} \sum_{i=1}^N \hat{g}_i \hat{w}_i, ~with~~
A:= \sum_{i=1}^N \hat{v}_i
\left( \hat{\ul w}^{\top} \hat{\bm K}^{(i)} \hat{\ul w} \right)>0.
\eeq
\end{property}

\paragraph{Step 1: Characterise the embedded MTBP.}

We now return to the roving server network. In this step we show how the joint queue-length process at successive polling instants at $Q_1$ can be described as an MTBP with immigration in each state. To this end, let ${\ul X}:=\left(X_1^, \ldots, X_N \right)$
be the $N$-dimensional vector that describes the joint queue length at an arbitrary polling instant at $Q_1$.
Let $X_{i,n}$ be the number of type-$i$ customers in the system at the $n$-th polling instant of the server
at $Q_1$, for $i=1,\ldots,N$ and $n=0,1,\ldots$, and let
\beq
{\ul X}_n:=\left(X_{1,n},\ldots, X_{N,n}\right)
\eeq
be the joint queue-length vector at the $n$-th polling instant at $Q_1$.
In this appendix we will provide the results for \emph{gated service} only. The results for exhaustive service can be obtained along the exact same lines, but require significantly more complex notations (see, for example, \cite{boonvdmeiwinandsRovingQuesta}) and are omitted here.

The following result describes the process
$\{ {\ul X}_n, n=0,1,\ldots\}$ as an MTBP.\\
\ \\
\begin{theorem}\label{thm:offspringimmigration}
The discrete-time process $\{{\ul X}_n, n=0,1,\ldots\}$ constitutes a
$N$-dimensional MTBP with immigration in each state. Denote by $B_i^*(s)$ and $R_i^*(s)$ the Laplace-Stieltjes transforms (LSTs) of respectively the service times $B_i$ and the switch-over times $R_i$. The probability generating function (PGF) of the offspring function
is given by the following expression: For $\vert z_i \vert \leq 1~(\ii)$,
\beq \label{equ31}
f(\ul z):=\left(f^{(1)}(\ul z), \ldots, f^{(N)}(\ul z) \right),
\eeq
where for $\ii$ the function $f^{(i)}(\ul z)$ is the unique solution of
\beq
\label{equ32}
f^{(i)}(\ul z)
:=
B_i^* \big( \sum_{j=1}^N \lambda_j\left(1-z_j\right)\big)
\cdot
\sum_{j=1}^N p_{i,j} f^{(j)}(\ul z),
\eeq
and where the PGF of the immigration function is given by
\beq
\label{equ33}
g(\ul z)
:=
\prod_{i=1}^N R_i^* \Big( \sum_{j=1}^i \lambda_j\left(1-z_j\right) + \sum_{j=i+1}^N \lambda_j(1-f^{(j)}(\ul z))\Big).
\eeq
\end{theorem}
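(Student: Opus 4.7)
The plan is to verify the branching property of the embedded queue-length process first, and then to derive the explicit offspring and immigration PGFs by tracing, respectively, a single tagged customer through one polling cycle of $Q_1$ and the new arrivals that enter during the switch-overs. The branching property is a direct consequence of three independence features built into the model: independent service times within each queue, independent routing decisions at service completions, and Poisson external arrival streams, which by the memoryless property split into independent pieces over disjoint time intervals. Conditional on ${\ul X}_n$, these independences imply that the descendants at the $(n+1)$-st polling of $Q_1$ of the individual customers present at the $n$-th polling form mutually independent groups, each governed by a law that depends only on the customer's type. This is precisely the defining property of an MTBP with immigration, and mirrors Resing's argument for branching-type polling systems, which we adapt here to include the rerouting.

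For the offspring PGF \eqref{equ32}, I would fix a tagged type-$i$ customer and compute the joint PGF of its descendants at the next polling of $Q_1$. The customer is served in $V_i$ with service time $B_i$; during that service, independent Poisson arrivals at $Q_j$ of rate $\lambda_j$ each contribute a factor $z_j$ to the PGF of its ``children'', so integrating over $B_i$ produces $B_i^*(\sum_j \lambda_j(1-z_j))$. Upon service completion the customer either departs or, with probability $p_{i,j}$, is routed to $Q_j$ and starts a sub-lineage whose PGF is by definition $f^{(j)}(\ul z)$; summing these contributions, with the convention $f^{(0)}(\ul z)\equiv 1$ absorbing the departure probability $p_{i,0}$, yields the factor $\sum_{j=1}^N p_{i,j} f^{(j)}(\ul z)$ appearing in the theorem. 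The two factors multiply because the external arrivals during $B_i$ and the onward evolution of the routed customer are conditionally independent.

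For the immigration PGF \eqref{equ33}, I would decompose the inter-polling interval at $Q_1$ into the $N$ switch-over periods $R_1,\dots,R_N$, during which no service takes place but customers still enter the network. During $R_i$, an external arrival at $Q_j$ with $j\leq i$ finds its queue already visited in the current cycle and is therefore still present, unchanged, at the next polling of $Q_1$, contributing a factor $z_j$; by contrast, an arrival at $Q_j$ with $j>i$ is admitted to the still-to-come gated visit at $Q_j$ and initiates a sub-lineage whose PGF is $f^{(j)}(\ul z)$. Integrating over the length of $R_i$ gives the $i$-th factor in \eqref{equ33}, and independence of the switch-over times together with independence of the arrival streams across disjoint time intervals produces the product structure.

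The subtlest step is the uniqueness of the solution of the recursive system \eqref{equ32}, which is needed to conclude that $f$ really is an offspring generating function. I would settle this by a monotone fixed-point argument on $[0,1]^N$: iterating the right-hand side of \eqref{equ32} starting from $f^{(i)}_0 \equiv 1$ yields a monotone sequence bounded in $[0,1]$, and under the stability assumption $\rho<1$ the routing sub-matrix is substochastic with spectral radius strictly less than one, so that every tagged customer leaves the system in finite expected time and the iteration contracts to the unique PGF solution. I anticipate this fixed-point step, together with verifying that the resulting mean matrix $\bm M$ has maximum eigenvalue $\rho$ (so that the log-moment condition \eqref{eq09} on $g$ is inherited from the moment assumptions on $R$ and on the $B_i$ and Property \ref{htbranchingprop} can later be invoked), to be the most technically involved part of the proof, but all of it follows well-established paths in the branching-process literature.
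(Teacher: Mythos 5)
Your proposal follows essentially the same route as the paper, which simply invokes Resing's generating-function argument for gated service and notes the two modifications you spell out (the served customer being replaced not only by the Poisson arrivals during its service time, giving the factor $B_i^*(\sum_j\lambda_j(1-z_j))$, but also by its routed copy, giving the factor $\sum_j p_{i,j}f^{(j)}(\ul z)$; and immigration coming from arrivals during the switch-over times, contributing $z_j$ for already-visited queues and $f^{(j)}$ for queues still to be visited); your added fixed-point/uniqueness discussion goes beyond what the paper records but is standard. One small caution for the later steps you anticipate: the maximum eigenvalue of ${\bm M}$ is not $\rho$ itself --- the paper's Lemma in Step 6 only establishes that it is $<1$, $=1$ or $>1$ according as $\rho<1$, $=1$ or $>1$, with $\xi'(1)=1/\delta$ --- though this does not affect the present theorem.
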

\begin{proof}
Relations (\ref{equ31})-(\ref{equ33}) can be obtained along the lines of Resing \cite{resing93} for the
case of classical gated service, using simple generating-function manipulations. The only difference is that the offspring function $f^{(i)}({\ul z})$ has changed in the sense that each customer served at $Q_i$ is effectively replaced {\it not only}
by all customers that arrive at the different queues during its services time (leading to PGF $B_i^*(\sum_{j=1}^N \lambda_j(1-z_j))$ in Equation \eqref{equ32}),
but {\it in addition} by a fresh customer at $Q_j$ (which creates an additional offspring $f^{(j)}({\ul z})$), with probability $p_{i,j}$.
Next, Equation (\ref{equ33}) stems from the fact that the immigration consists of the contributions of newly arriving customers
that arrive during the switch-over times $R_i, \ii$.
\end{proof}

\paragraph{Step 2: Compute the mean offspring matrix for the roving server network.}

The following result gives a characterization of the mean offspring matrix ${\bm M}$ defined in (\ref{eq06}).

\begin{lemma}\label{meanoffspringlemma}
\it The mean offspring matrix ${\bm M}=\left( m_{i,j} \right)$ can be expressed by
\beq
\label{eq33}
{\bm M}={\bm M}_1 \cdots {\bm M}_{N},
\eeq
where for $k=1,\ldots,N$, the elements of the matrix ${\bm M}_k=\left( m_{i,j}^{(k)} \right)$ are
given by: For $i,j=1,\ldots,N$, $i \neq k$,
\beq
\label{eq35}
m_{i,j}^{(k)} = I_{\{i=j\}},
\eeq
and for $i=k$,
\beq
\label{eq36}
m_{i,j}^{(k)} = \lambda_{j} b_i + p_{i,j}~{\rm for~}j=1,\ldots,N.
\eeq
\end{lemma}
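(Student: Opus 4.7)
The plan is to establish the factorization via a branching-process interpretation: each factor $\bm{M}_k$ is the mean offspring matrix of a single queue visit within a cycle, and $\bm{M}$ is the composition of the $N$ per-visit matrices in the cyclic visit order.

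First, I would decompose one MTBP generation (the cycle from one polling of $Q_1$ to the next) into $N$ successive sub-stages, corresponding to the visits to $Q_1, Q_2, \ldots, Q_N$. Fix a single type-$i$ ancestor present at the initial polling of $Q_1$ and let $\ul Y^{(k)}$ denote the mean vector of its descendants in the system immediately before the visit to $Q_k$, with $\ul Y^{(1)} = \ul e_i$. Switch-over times do not enter the offspring analysis: the Poisson arrivals during $R_k$ form the immigration component $g(\ul z)$ in Theorem \ref{thm:offspringimmigration} and are not descendants of any ancestor. Hence the transition $\ul Y^{(k)} \to \ul Y^{(k+1)}$ is determined entirely by what happens during the visit to $Q_k$, and its mean matrix is by construction the per-visit mean matrix claimed in the lemma.

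Next I would verify the claimed form of $\bm{M}_k$. A type-$i$ descendant with $i\neq k$ is not served during the visit to $Q_k$ and simply persists, giving $m^{(k)}_{i,j} = I_{\{i=j\}}$. A type-$k$ descendant is served exactly once: during its service time of mean $b_k$, each independent Poisson$(\lambda_j)$ stream contributes $\lambda_j b_k$ type-$j$ children on average, and upon completion the served customer becomes a type-$j$ child with probability $p_{k,j}$ (or leaves with probability $p_{k,0}$). Summing these two contributions yields $m_{k,j}^{(k)} = \lambda_j b_k + p_{k,j}$, matching the statement. By independence of the offspring of distinct individuals, the sub-stage mean matrices compose, so $\ul Y^{(N+1)} = \ul e_i\, \bm{M}_1\bm{M}_2 \cdots \bm{M}_N$; since row $i$ of $\bm{M}$ by its definition \eqref{eq06} equals $\ul Y^{(N+1)}$, this establishes $\bm{M} = \bm{M}_1 \bm{M}_2 \cdots \bm{M}_N$.

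The main obstacle is justifying this decomposition cleanly, because under gated service a descendant routed to $Q_j$ with $j > k$ is re-served in the \emph{same} cycle and must contribute further offspring, whereas one routed to $Q_j$ with $j \leq k$ merely persists until the next generation. The proposed factorization handles both cases automatically: a type-$j$ output with $j \leq k$ produced at stage $k$ is acted on as identity by each of the subsequent factors $\bm{M}_{k+1}, \ldots, \bm{M}_N$ (whose non-identity rows concern only types $k+1, \ldots, N$), while a type-$j$ output with $j > k$ is picked up by $\bm{M}_j$ at the appropriate later stage. As an independent algebraic check, differentiating \eqref{equ32} at $\ul z = \ul 1$ gives the linear system $m_{i,\ell} = \lambda_\ell b_i + \sum_{j=1}^N p_{i,j}\, m_{j,\ell}$, and one can verify directly that the product $\bm{M}_1 \cdots \bm{M}_N$ satisfies it, providing a second confirmation of the factorization.
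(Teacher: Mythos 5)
Your main argument is sound, and it is in substance what the paper's one-line proof (``take the partial derivatives of the offspring function'') is gesturing at: decompose a generation into the $N$ visit stages, note that $\bm{M}_k$ is the identity off row $k$ while row $k$ collects the mean $\lambda_j b_k$ Poisson arrivals during a service plus the routed copy $p_{k,j}$, and observe that outputs of type $j\le k$ are passed through unchanged by $\bm{M}_{k+1},\dots,\bm{M}_N$ whereas outputs of type $j>k$ are branched again by $\bm{M}_j$ later in the same cycle. The separation of switch-over arrivals into the immigration term is also handled correctly, and the left-action/ordering convention matches \eqref{eq06}.

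Your closing ``independent algebraic check'' is wrong, however, and would fail if carried out. Literal differentiation of \eqref{equ32} does give $m_{i,\ell}=\lambda_\ell b_i+\sum_{j=1}^N p_{i,j}m_{j,\ell}$, but the product $\bm{M}_1\cdots\bm{M}_N$ does \emph{not} satisfy this system. Already for $N=2$ with no routing ($p_{i,j}=0$), the system forces $m_{1,2}=\lambda_2 b_1$, whereas the product gives $m_{1,2}=\lambda_2 b_1\cdot\lambda_2 b_2$ — and the latter is the correct gated value (type-$2$ arrivals generated during $V_1$ are served in $V_2$ of the \emph{same} cycle and only their children survive to the next polling instant of $Q_1$). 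The discrepancy is not in your construction but in \eqref{equ32} as printed: that recursion defers \emph{all} Poisson arrivals to the next generation and re-branches \emph{all} routed customers within the current one, neither of which respects the gated rule that only queues not yet visited ($j>i$) are revisited this cycle (contrast the immigration function \eqref{equ33}, which does distinguish $j\le i$ from $j>i$). The recursion that the product actually satisfies, and which you could substitute as a genuine consistency check, is
\[
m_{i,\ell}=b_i\Big(\lambda_\ell I_{\{\ell\le i\}}+\sum_{j>i}\lambda_j m_{j,\ell}\Big)+p_{i,\ell}\,I_{\{\ell\le i\}}+\sum_{j>i}p_{i,j}\,m_{j,\ell},
\]
obtained by differentiating the corrected gated offspring function
\[
f^{(i)}(\ul z)=B_i^*\Big(\sum_{j\le i}\lambda_j(1-z_j)+\sum_{j>i}\lambda_j\big(1-f^{(j)}(\ul z)\big)\Big)\Big(p_{i,0}+\sum_{j\le i}p_{i,j}z_j+\sum_{j>i}p_{i,j}f^{(j)}(\ul z)\Big).
\]
Keep your per-visit decomposition as the proof, and either drop the check or replace it with this recursion.
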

\begin{proof}
The result can be obtained directly from Theorem \ref{thm:offspringimmigration} by taking the partial derivatives of the offspring function
defined in (\ref{equ31}) and (\ref{equ32}).
\end{proof}

\paragraph{Step 3: Determine the left and right eigenvectors of ${\bm M}$ at \boldmath$\rho=1$.}

The following result gives the left and right eigenvectors  (normalized according to (\ref{eq08})) of the mean offspring
matrix ${\bm M}$, defined in (\ref{eq33})-(\ref{eq36}), evaluated at $\rho=1$.

\begin{lemma}\label{eigenvectorslemma}
The right eigenvector $\hat{\ul w}$ of the mean matrix $\hat{\bm M}$, normalized such that
$\hat{\ul w}^{\top} \hat{\ul w}=1$, corresponding with maximum eigenvalue 1, is given by
\beq
\label{eq43}
\hat{\ul w} = \left(
\begin{array}{c}
\hat{w}_1 \\ \vdots \\ \hat{w}_{N}
\end{array}
\right)
:=
\vert {\ul {\tilde{b}}} \vert^{-1} {\ul {\tilde{b}}},
~ with~
{\ul {\tilde{b}}}:=\left(
\begin{array}{c}
\tilde{b}_1 \\ \vdots \\ \tilde{b}_N
\end{array}
\right).
\eeq
The corresponding left eigenvector $\hat{\ul v}$, normalized such that $\hat{\ul v}^{\top} \hat{\ul w}=1$, corresponding with maximum eigenvalue 1,  is given by
\beq
\label{eq44}
\hat{\ul v}= \left(
\begin{array}{c}
\hat{v}_1 \\ \vdots \\ \hat{v}_{N}
\end{array} \right)
:=
{\vert {\ul {\tilde{b}}} \vert \over \delta} {\hat{\ul u}},~~
with~~ {\ul u} :=\left(
\begin{array}{c}
{u}_1 \\ \vdots \\ {u}_{N}
\end{array}
\right),
~~~where~
{u}_i:={\lambda}_i \sum_{j=i}^N {\rho}_j + \sum_{j=i}^N \gamma_j p_{j,i},
\eeq
and where
\beq
\label{eq45}
\delta :=  \hat{\ul u}^{\top} \ul{\tilde{b}}
=
\sum_{i=1}^N \sum_{j=i+1}^N \hat{\rho}_i \hat{\rho}_j + \sum_{i=1}^N \tilde{b}_i \sum_{j=i}^N \hat{\gamma}_j p_{j,i}.
\eeq
\end{lemma}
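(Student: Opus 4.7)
The three claims to verify are (i) $\hat{\bm M}\ul{\btilde} = \ul{\btilde}$, (ii) $\hat{\ul u}^{\top}\hat{\bm M} = \hat{\ul u}^{\top}$, and (iii) the normalization $\hat{\ul v}^{\top}\hat{\ul w}=1$ together with the explicit formula for $\delta$ in \eqref{eq45}. Once a strictly positive eigenvector with eigenvalue $1$ is produced, maximality of that eigenvalue follows from Perron--Frobenius applied to the nonnegative irreducible matrix $\hat{\bm M}$. The natural strategy is to exploit the factorization $\hat{\bm M} = \hat{\bm M}_1\cdots\hat{\bm M}_N$ from Lemma~\ref{meanoffspringlemma} and to apply the factors one at a time.

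For (i), observe that $\hat{\bm M}_k$ acts as the identity on all coordinates other than the $k$-th, so it suffices to verify $\hat{\bm M}_k\ul{\btilde} = \ul{\btilde}$ at $\rho=1$ for each $k$. Direct computation gives $(\hat{\bm M}_k\ul{\btilde})_k = b_k\sum_j \hat\lambda_j\btilde_j + \sum_j p_{k,j}\btilde_j$. Equation \eqref{eq02} rewrites the second sum as $\btilde_k - b_k$, while the identity $\sum_j \hat\lambda_j\btilde_j = 1$ at $\rho=1$ is obtained by writing $\ul\gamma = (I-P^{\top})^{-1}\ul\lambda$ and $\ul{\btilde} = (I-P)^{-1}\ul b$, so that $\rho = \ul\gamma^{\top}\ul b = \ul\lambda^{\top}\ul{\btilde}$. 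Substituting gives $(\hat{\bm M}_k\ul{\btilde})_k = b_k + (\btilde_k - b_k) = \btilde_k$, and composition over $k$ yields $\hat{\bm M}\ul{\btilde} = \ul{\btilde}$.

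For (ii) --- which I expect to be the main obstacle --- the product $\hat{\ul u}^{\top}\hat{\bm M}_1\cdots\hat{\bm M}_N$ does not collapse as cleanly as (i), because each factor $\hat{\bm M}_k$ applied from the left adds a rank-one correction touching \emph{every} coordinate of the running vector. My plan is to define $\ul u^{(k)\top} := \hat{\ul u}^{\top}\hat{\bm M}_1\cdots\hat{\bm M}_k$ (with $\ul u^{(0)} = \hat{\ul u}$) and prove by induction on $k$ an explicit closed form for $\ul u^{(k)}$. The recursion reads $u_k^{(k)} = u_k^{(k-1)}(\hat\lambda_k b_k + p_{k,k})$ and $u_j^{(k)} = u_j^{(k-1)} + u_k^{(k-1)}(\hat\lambda_j b_k + p_{k,j})$ for $j\neq k$. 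The telescoping should close via repeated use of $\hat\rho_k = \hat\gamma_k b_k$ and the balance equation $\hat\gamma_i = \hat\lambda_i + \sum_j\hat\gamma_j p_{j,i}$, yielding $u_j^{(N)} = u_j$ after suitable reindexing. A useful reduction is to dispatch the polling-only case $p_{i,j}\equiv 0$ first (where $u_i = \hat\lambda_i\sum_{j\ge i}\hat\rho_j$ is the classical formula of \cite{olsenvdmei03}) and then track the additional routing corrections $\sum_{j\ge i}\hat\gamma_j p_{j,i}$ separately; the bookkeeping is elementary but dense.

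For (iii), the normalization $\hat{\ul v}^{\top}\hat{\ul w}=1$ reduces, by construction of $\hat{\ul v}$ in \eqref{eq44}, to the scalar identity $\delta = \hat{\ul u}^{\top}\ul{\btilde}$. Expanding via the definition of $u_i$ gives $\hat{\ul u}^{\top}\ul{\btilde} = \sum_i \btilde_i\bigl(\hat\lambda_i\sum_{j=i}^N\hat\rho_j + \sum_{j=i}^N \hat\gamma_j p_{j,i}\bigr)$, and interchanging the order of summation in the first double sum together with the identity $\hat\lambda_i\btilde_i$ reproduces the double sum $\sum_i\sum_{j=i+1}^N\hat\rho_i\hat\rho_j$ of \eqref{eq45}, leaving the routing term untouched. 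Consistency of this expression with the independent definition $\delta=\sum_i\delta_i$ coming from \eqref{deltai} can then be checked by a straightforward algebraic rearrangement; either representation serves as the normalizing constant in \eqref{eq44}.
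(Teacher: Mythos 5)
Your part (i) is precisely the paper's own argument: it verifies $\sum_{j}\hat m^{(k)}_{k,j}\btilde_j=b_k\sum_j\hat\lambda_j\btilde_j+\sum_j p_{k,j}\btilde_j=\btilde_k$ via \eqref{eq02} and $\sum_j\hat\lambda_j\btilde_j=1$ at $\rho=1$, deduces $\hat{\bm M}_k\hat{\ul w}=\hat{\ul w}$ for every $k$, and composes; your derivation $\ul\lambda^{\top}\ul{\btilde}=\ul\gamma^{\top}\ul b=\rho$ even supplies a justification of the normalisation $\sum_j\hat\lambda_j\btilde_j=1$ that the paper takes for granted. The genuine gap is in part (ii). You are right that this is the hard part --- the paper disposes of it with the words ``similar arguments'', which is optimistic because $\hat{\ul u}$ is \emph{not} a left eigenvector of the individual factors (for $N=2$ without routing, $\hat{\ul u}^{\top}\hat{\bm M}_1=(\hat\lambda_1\hat\rho_1,\hat\lambda_2)$ whereas $\hat{\ul u}^{\top}=(\hat\lambda_1,\hat\lambda_2\hat\rho_2)$) --- and your recursion for $\ul u^{(k)}$ is the correct one. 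But you never state the closed form that the induction is supposed to establish, let alone verify the inductive step; ``the telescoping should close'' and ``the bookkeeping is elementary but dense'' is exactly where all of the content of the left-eigenvector claim resides, so as written that claim is asserted rather than proved. (The induction hypothesis that works is that $\ul u^{(k)}$ is the analogue of $\hat{\ul u}$ for the cyclically rotated product $\hat{\bm M}_{k+1}\cdots\hat{\bm M}_N\hat{\bm M}_1\cdots\hat{\bm M}_k$, i.e.\ with the sums $\sum_{j=i}^{N}$ replaced by cyclic sums terminating at queue $k$; once this is written down, one application of $\hat\gamma_i=\hat\lambda_i+\sum_j\hat\gamma_jp_{j,i}$ and of $\sum_j\hat\rho_j=1$ per step closes it.)

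A further flaw sits in part (iii). The normalisation $\hat{\ul v}^{\top}\hat{\ul w}=1$ is indeed immediate, but only because the first equality in \eqref{eq45} is a \emph{definition}, $\delta:=\hat{\ul u}^{\top}\ul{\btilde}$. Your claim that interchanging summation turns the first term of $\hat{\ul u}^{\top}\ul{\btilde}$ into $\sum_{i}\sum_{j=i+1}^{N}\hat\rho_i\hat\rho_j$ is false: that term equals $\sum_i\hat\lambda_i\btilde_i\sum_{j=i}^{N}\hat\rho_j$, which retains the diagonal contributions and involves $\hat\lambda_i\btilde_i$ rather than $\hat\rho_i$ (these agree only in aggregate, $\sum_i\hat\lambda_i\btilde_i=\sum_i\hat\rho_i=1$, not termwise). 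For $N=2$, no routing and $\hat\rho_1=\hat\rho_2=\tfrac12$ one finds $\hat{\ul u}^{\top}\ul{\btilde}=\tfrac34$ while $\sum_i\sum_{j=i+1}^N\hat\rho_i\hat\rho_j=\tfrac14$; the value $\tfrac34$ is the one consistent with Definition \ref{deltalemma}, so the displayed double sum cannot be recovered by the manipulation you describe (the first part should read $\sum_{i=1}^N\sum_{j=i}^N\hat\lambda_i\btilde_i\hat\rho_j$). This does not endanger the eigenvector statements, but the algebraic identity you assert does not hold and should not be presented as a ``straightforward rearrangement''.
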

\begin{proof}
First, it is readily seen by using equations (\ref{eq35})-(\ref{eq36}) and (\ref{eq02}) that for $k=1,\ldots,N$, we have
\beq
\label{eq46}
\sum_{j=1}^{N} \hat{m}_{k,j}^{(k)} \tilde{b}_j
=
\sum_{j=1}^{N}  \left (\hat{\lambda}_{j} b_k + p_{k,j} \right) \tilde{b}_j
=
b_k \sum_{j=1}^N \hat{\lambda}_j \tilde{b}_j + \sum_{j=1}^N p_{k,j} \tilde{b}_j
=
b_k
+
\sum_{j=1}^N p_{k,j} \tilde{b}_j
=
\tilde{b}_k.
\eeq
This immediately implies that $\hat{\bm M}_k \hat{\ul w} = \hat{\ul w}$ for $k=1,\ldots,N$, and hence from (\ref{eq33})
that $\hat{\bm M} \hat{\ul w}=\hat{\bm M}_1 \cdots  \hat{\bm M}_N  \hat{\ul w} = \hat{\ul w}$, which shows that $\hat{\ul w}$
indeed is a right eigenvector of $\hat{\bm M}$ with eigenvalue 1. Similar arguments can be used to show that
$\hat{\bm M}^{\top} \hat{\ul v} = \hat{\ul v}$.
\end{proof}

\begin{remark}
Although $\delta$ defined in \eqref{eq45} may appear different, at first sight, than $\delta$ defined in Definition \ref{deltalemma}, it can be shown that they are in fact identical.
\end{remark}

\paragraph{Step 4: Mean immigration function at \boldmath $\rho=1$.}
We now proceed to specify the mean immigration vector ${\ul g}$, defined in (\ref{eq08}), for the model under consideration.
Considering the evolution of the $N$-dimensional state vector as a discrete-time Markov chain
$\{{\ul X}_n, n=0,1,\ldots\}$
at successive polling instants at $Q_1$, the ``immigrants'' in the $n$-th generation are the customers present a time $n$ that
are not children of any of the customers present at time $n-1$. Denote the mean immigration vector by
${\ul g}=(g_1,\ldots,g_N)$, where $g_i$ stands for the mean number of type-$i$ immigrants.\\

\begin{lemma}\label{meanimmigrationlemma}
The mean immigration function is given by ${\ul g}=(g_1,\ldots,g_N)$, where for $j=1,\ldots,N$,
\beq
\label{eq48}
g_j = \sum_{i=1}^N r_i \left(\lambda_j I_{\{ j \leq i\}} + \sum_{k=i+1}^N \lambda_k m_{k,j} \right).
\eeq
{\it Moreover,}
\beq
\label{eq50}
\hat{\ul g}^{\top} \hat{\ul w} = \vert {\ul b} \vert^{-1}r.
\eeq
\end{lemma}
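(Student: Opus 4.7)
The plan is to compute $g_j$ by directly differentiating the immigration PGF given in \eqref{equ33}. Writing $s_i(\ul z):=\sum_{j'=1}^i \lambda_{j'}(1-z_{j'})+\sum_{k=i+1}^N \lambda_k(1-f^{(k)}(\ul z))$ for the argument of the $i$-th factor $R_i^*$, note that $s_i(\ul 1)=0$ and hence $R_i^*(s_i(\ul 1))=1$ for every $i$. Consequently, when the product rule is applied and we evaluate at $\ul z=\ul 1$, only the derivative of a single factor survives in each term, giving $g_j=\sum_{i=1}^N (R_i^*)'(0)\,\partial_{z_j} s_i(\ul 1)$. Using $(R_i^*)'(0)=-r_i$ and $\partial f^{(k)}/\partial z_j|_{\ul z=\ul 1}=m_{k,j}$ (the definition \eqref{eq06} of the mean offspring matrix), a short chain rule computation gives $\partial_{z_j}s_i(\ul 1)=-\lambda_j I_{\{j\leq i\}}-\sum_{k=i+1}^N \lambda_k m_{k,j}$; the two minus signs cancel and formula \eqref{eq48} falls out directly.

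For the inner product $\hat{\ul g}^\top \hat{\ul w}$, I would substitute the right eigenvector $\hat{\ul w}=|\ul{\tilde{b}}|^{-1}\ul{\tilde{b}}$ from Lemma \ref{eigenvectorslemma} and plug in \eqref{eq48}, reducing the problem to evaluating $\sum_j \tilde{b}_j \hat g_j$. Interchanging the order of summation to put the $i$-sum outermost produces, for each fixed $i$, the inner expression $\sum_{j=1}^i \tilde{b}_j \hat\lambda_j+\sum_{k=i+1}^N \hat\lambda_k \sum_{j=1}^N \hat m_{k,j}\tilde{b}_j$. The crucial ingredient is the eigenvalue identity $\hat{\bm M}\,\ul{\tilde{b}}=\ul{\tilde{b}}$ established in the proof of Lemma \ref{eigenvectorslemma}, which collapses the innermost sum $\sum_j \hat m_{k,j}\tilde{b}_j$ to $\tilde{b}_k$. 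The two pieces then recombine into the complete sum $\sum_{j=1}^N \hat\lambda_j \tilde{b}_j$, which at the critical scaling equals $\hat\rho=1$ by the standard workload identity $\rho=\sum_j \lambda_j \tilde{b}_j$ (each external stream at $Q_j$ contributes $\lambda_j \tilde{b}_j$ of load across the whole network). What remains is the outer sum $\sum_{i=1}^N r_i = r$, and dividing by $|\ul{\tilde{b}}|$ gives the claimed identity.

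The entire argument is essentially bookkeeping resting on results that are already in hand: the first part is multivariable calculus on the explicit PGF, and the second part turns on the eigenvalue equation $\hat{\bm M}\,\hat{\ul w}=\hat{\ul w}$ together with the load identity $\sum_j \lambda_j \tilde{b}_j=\rho$. The only mild pitfall I foresee is in the second part, where one must be careful to apply the identity $\hat{\bm M}\,\ul{\tilde{b}}=\ul{\tilde{b}}$ for the full mean offspring matrix rather than for one of the factor matrices $\hat{\bm M}_k$; this is justified because $f^{(k)}(\ul z)$ encodes the entire generational step of the MTBP, so $m_{k,j}=\partial f^{(k)}/\partial z_j|_{\ul z = \ul 1}$ is an entry of $\hat{\bm M}$ itself.
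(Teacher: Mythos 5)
Your proposal is correct and follows essentially the same route as the paper: differentiate the immigration PGF \eqref{equ33} once and evaluate at $\ul z=\ul 1$ to get \eqref{eq48}, then interchange sums and collapse $\sum_j \hat m_{k,j}\tilde b_j=\tilde b_k$ via the eigenvalue relation $\hat{\bm M}\,\ul{\tilde b}=\ul{\tilde b}$ together with $\sum_j\hat\lambda_j\tilde b_j=1$. Note that what both you and the paper's own computation actually obtain is $\vert\ul{\tilde b}\vert^{-1}r$; the $\vert\ul b\vert^{-1}$ in the lemma statement is a typo, and your version is the one consistent with its later use in Step 5.
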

\begin{proof}
Equation (\ref{eq48}) follows directly from Theorem \ref{thm:offspringimmigration} by differentiating once with respect to $s_j$ and subsituting
${\ul s}=(1,\ldots,1)$. Next, to prove (\ref{eq50}), assume $\rho=1$. We first observe that it follows from (\ref{eq48}) that the mean number of type-$j$ customers that immigrate during a cycle is given by
\beq
\label{eq69}
\hat{g}_j = \sum_{i=1}^N r_i \left( \hat{\lambda}_j I_{\{ j \leq i\}} +
\sum_{k=i+1}^N \hat{\lambda}_k \hat{m}_{k,j} \right).
\eeq
This implies
\beq
\label{eq70}
\hat{\ul g}^{\top}\hat{\ul w}:=\vert {\ul {\tilde{b}}} \vert^{-1} \sum_{j=1}^N \hat{g}_j \tilde{b}_j
= \vert \tilde{\ul b} \vert^{-1}
\sum_{i=j}^N r_i
\left(
\sum_{j=1}^i \tilde{b}_j \hat{\lambda}_j
+
\sum_{k=i+1}^N \hat{\lambda}_k \sum_{j=1}^N \hat{m}_{k,j} \tilde{b}_j \right)
\eeq
\beq
=
\vert \tilde{\ul b} \vert^{-1}
\sum_{i=1}^N r_i
\left(
\sum_{j=1}^i \tilde{b}_j \hat{\lambda}_j
+
\sum_{k=i+1}^N \hat{\lambda}_k \tilde{b}_k \right)
=
\vert \tilde{\ul b} \vert^{-1} r \sum_{i=1}^N \hat{\rho}_i
=
\vert \tilde{\ul b} \vert^{-1} r,
\eeq
by using the definition in (\ref{eq48}) and the results in Lemma 2.
\end{proof}

\paragraph{Step 5: Determine the parameter \boldmath $A$.}
The following result gives an expression for the scaling parameter $A$, defined in (\ref{A}).
\begin{lemma}\label{scalingparameterlemma}
\beq
\label{eq51}
A = \vert \tilde{\ul b} \vert^{-1} \delta^{-1} \cdot {\tilde{b}^{\textit{res}}},
\eeq
\end{lemma}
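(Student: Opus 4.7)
The plan is to evaluate $A$ directly from its definition~\eqref{A} by computing $\hat{\bm K}^{(i)}$ from the offspring function~\eqref{equ32}, contracting both sides with the eigenvectors $\hat{\ul w},\hat{\ul v}$ obtained in Lemma~\ref{eigenvectorslemma}, and identifying the resulting sum with $|\tilde{\ul b}|^{-1}\delta^{-1}\bres$. The core technical input is the quadratic form $\Phi_i:=\tilde{\ul b}^{\top}\hat{\bm K}^{(i)}\tilde{\ul b}$, which one expects to collapse, via the linear recursion~\eqref{eq02}, to the second moment $\tilde b_i^{(2)}$ of the total service time of a type-$i$ customer.

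The first main step is to differentiate~\eqref{equ32} twice with respect to $z_j$ and $z_k$ and evaluate at $\ul z=\ul 1$. Writing $S(\ul z):=\sum_\ell\lambda_\ell(1-z_\ell)$ and using $B_i^*(0)=1$, $(B_i^*)'(0)=-b_i$, $(B_i^*)''(0)=b_i^{(2)}$, together with $f^{(\ell)}(\ul 1)=1$ and the first-moment formula $m_{i,j}=\lambda_j b_i+p_{i,j}$ from Lemma~\ref{meanoffspringlemma}, the chain rule produces four groups of contributions to $\hat k^{(i)}_{j,k}$: a pure curvature term proportional to $b_i^{(2)}$; two symmetric cross terms in which one derivative hits the $B_i^*$-factor and the other the routing factor $\sum_\ell p_{i,\ell}f^{(\ell)}(\ul z)$; and a recursive term involving $\partial_j\partial_k f^{(\ell)}$. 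Contracting with $\tilde b_j\tilde b_k$ on both sides and invoking the key identity $\sum_j\hat\lambda_j\tilde b_j=\hat\rho=1$ at $\rho=1$ (a direct consequence of combining the definition of $\tilde b_j$ in~\eqref{eq02} with the flow-balance equations for $\gamma_i$), together with the first-moment identity~\eqref{eq46}, the cross terms telescope and one obtains the implicit linear system
\[
\Phi_i=b_i^{(2)}+2b_i\sum_{j=1}^N p_{i,j}\tilde b_j+\sum_{j=1}^N p_{i,j}\Phi_j,\qquad i=1,\dots,N.
\]
Comparing with the recursion~\eqref{eq02} defining $\tilde b_i^{(2)}$, this system is satisfied uniquely by $\Phi_i=\tilde b_i^{(2)}$.

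The second main step is essentially bookkeeping. Substituting $\hat{\ul w}=|\tilde{\ul b}|^{-1}\tilde{\ul b}$ into the definition of $A$ yields $\hat{\ul w}^{\top}\hat{\bm K}^{(i)}\hat{\ul w}=|\tilde{\ul b}|^{-2}\tilde b_i^{(2)}$, and using $\hat v_i=|\tilde{\ul b}|\,\hat u_i/\delta$ from~\eqref{eq44} gives
\[
A=\sum_{i=1}^N \hat v_i\,|\tilde{\ul b}|^{-2}\tilde b_i^{(2)}=|\tilde{\ul b}|^{-1}\delta^{-1}\sum_{i=1}^N \hat u_i\tilde b_i^{(2)},
\]
which, upon identifying the sum on the right with $\bres$ (consistent with specialising the variance parameter $\sigma^2$ of Conjecture~\ref{conjecturescaledworkHT} to Poisson arrivals, where the interarrival-time contribution collapses onto the service-time second moments), delivers the claimed formula.

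The main obstacle is the first step: the second derivatives of~\eqref{equ32} couple different indices $i$ through the recursive appearance of $f^{(\ell)}$ in the routing factor, so the $\Phi_i$ satisfy an $N$-dimensional linear system rather than admitting a single-line computation. The crucial observation is that this system is exactly the defining recursion~\eqref{eq02} of the $\tilde b_i^{(2)}$, which requires careful tracking of how the first-moment identities $\sum_j \tilde b_j m_{i,j}=\tilde b_i$ and $\sum_j\hat\lambda_j\tilde b_j=1$ combine to eliminate all residual cross terms. Once this match is established, the remainder of the argument is routine linear algebra following the pattern of Lemma~\ref{eigenvectorslemma}.
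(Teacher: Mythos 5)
Your overall strategy (compute $\hat{\bm K}^{(i)}$ explicitly, contract with the eigenvectors of Lemma~\ref{eigenvectorslemma}, and collapse the result via the recursion \eqref{eq02}) is the right kind of argument, and more ambitious than the paper, which for this lemma only gives a pointer to Remark 5 of \cite{RvdM_QUESTA}. But the execution has two genuine gaps, and the second is fatal. First, you compute $\hat{\bm K}^{(i)}$ by differentiating \eqref{equ32} read literally, treating every arrival during a service and every routed child on the same footing. The one-step offspring function of the cycle-level MTBP to which Property~\ref{htbranchingprop} is applied is, however, the \emph{composed} stage-by-stage function: that is the only reading consistent with the product form $\bm M=\bm M_1\cdots\bm M_N$ of Lemma~\ref{meanoffspringlemma} (the literal first derivative of \eqref{equ32} would give the rank-one matrix $(\bm I-{\bm P})^{-1}\,\ul{b}\,\ul{\lambda}^{\top}$ with ${\bm P}=(p_{i,j})$, not the product) and with the $j\le i$ versus $j>i$ split in the immigration function \eqref{equ33}. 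The second derivatives of the composed function contain cross-visit-period terms (an outer second derivative multiplying products of inner first derivatives) that your flat computation drops; for instance, for two gated queues without routing one finds $\ul{\tilde{b}}^{\top}\hat{\bm K}^{(1)}\ul{\tilde{b}}=b_1^{(2)}+b_1\hat\lambda_2 b_2^{(2)}$, not $b_1^{(2)}$.

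Second, and decisively: even granting $\Phi_i=\btilde_i^{(2)}$, your last step, ``identifying $\sum_i\hat u_i\btilde_i^{(2)}$ with $\bres$,'' is not an identity and is not argued. At $\rho=1$ one has $\sum_j\hat\lambda_j\btilde_j=1$, so $\bres=\tfrac12\sum_i\hat\lambda_i\btilde_i^{(2)}$, whereas your sum weights $\btilde_i^{(2)}$ by $\hat u_i=\hat\lambda_i\sum_{j\ge i}\hat\rho_j+\sum_{j\ge i}\hat\gamma_jp_{j,i}$. Already for a two-queue gated polling system without routing, $\hat u_1=\hat\lambda_1$ and $\hat u_2=\hat\lambda_2\hat\rho_2$, so $\hat\lambda_1b_1^{(2)}+\hat\lambda_2\hat\rho_2b_2^{(2)}\neq\tfrac12\bigl(\hat\lambda_1b_1^{(2)}+\hat\lambda_2b_2^{(2)}\bigr)$ in general. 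The entire content of the lemma is precisely that the non-uniform weights $\hat u_i$, combined with the cross-stage terms in $\hat{\bm K}^{(i)}$ that were discarded in your first step, conspire to produce the uniform weighting $\hat\lambda_i$ hidden inside $\bres$; appealing to ``consistency with Conjecture~\ref{conjecturescaledworkHT}'' at this point is circular, since the conjecture's parameters are themselves calibrated to this limit. To close the argument you would need to carry out the second-order differentiation of the genuinely compositional offspring function, with the routing terms included, along the lines of Remark 5 in \cite{RvdM_QUESTA}.
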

where $\tilde{b}^\textit{res}=\frac{\E[\Btilde^2]}{2\E[\Btilde]}$ denotes the expected residual extended service time $\Btilde$ of an \emph{arbitrary} customer in the system. More precisely,
\[
\E[\Btilde^k]=\sum_{i=1}^N\lambda_i\E[\Btilde_i^k]/\sum_{j=1}^N\lambda_j.
\]
\begin{proof}
This result can be proven following the same lines as in \cite{RvdM_QUESTA} (Remark 5) for the
case of branching-type service policies for the case without customer routing (i.e., $p_{i,j}=0$ for all $i, j$).
%See Remark 1 below for an alternative proof.
\end{proof}

\paragraph{Step 6: Asymptotic properties for the maximum eigenvalue of \boldmath$M$.}
The following result describes the limiting behaviour of the maximum eigenvalue $\xi(\rho)$ of the matrix ${\bm M}$
defined in Lemma 1, considered as a function of $\rho$, as $\rho$ goes to 1.
\begin{lemma}
The maximum eigenvalue $\xi=\xi(\rho)$ satisfies the  following properties:\\
\ \\
{\rm (1)} $\xi<1$ if and only if $\rho<1$, $\xi=1$ if and only if $\rho=1$ and $\xi>1$ if and only if $\rho>1$;\\
{\rm (2)} $\xi(\rho)$ is a continuous function of $\rho$;\\
{\rm (3)} $\lim_{\rho \uparrow 1} \xi(\rho) = \xi(1) = 1$;\\
{\rm (4)} the derivative of $\xi(\rho)$ at $\rho=1$ is given by
\beq
\label{eq53}
\xi^{\prime}(1)=\lim_{\rho \uparrow 1}~{1-\xi(\rho) \over 1-\rho}={1 \over \delta},
\eeq
where $\delta$ is defined in (\ref{eq45}).
\end{lemma}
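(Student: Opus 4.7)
The plan is to dispose of parts (1)--(3) by invoking standard Perron--Frobenius and multi-type branching-process theory, and then to attack part (4) by first-order eigenvalue perturbation using the factorisation $\bm{M}=\bm{M}_1\cdots\bm{M}_N$ from Lemma~\ref{meanoffspringlemma}.

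For parts (1)--(3): the entries of $\bm{M}(\rho)$ are polynomial in $\rho$ (through $\lambda_j=\hat\lambda_j\rho$) and non-negative. Assuming irreducibility of $\bm M$ (which holds under the mild connectedness assumption implicit throughout the paper), Perron--Frobenius yields a simple, positive, dominant eigenvalue $\xi(\rho)$ that is real-analytic in $\rho$, hence in particular continuous, giving (2). The equivalence in (1) follows from the standard dichotomy for MTBPs with immigration: the embedded chain $\{\ul X_n\}$ is ergodic iff $\xi<1$, which must coincide with the well-known stability criterion $\rho<1$ for the roving-server network (see Sidi et al.\ \cite{sidi2}); monotonicity of $\xi(\rho)$ in each entry then pins down the strict inequalities. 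Part (3) is immediate from (1) and (2).

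For part (4), I differentiate $\bm{M}(\rho)\bm{w}(\rho)=\xi(\rho)\bm{w}(\rho)$ at $\rho=1$ and pre-multiply by $\hat{\ul v}^{\top}$. Using $\hat{\ul v}^{\top}\hat{\bm M}=\hat{\ul v}^{\top}$, $\xi(1)=1$, and the normalisation $\hat{\ul v}^{\top}\hat{\ul w}=1$, the $\bm{w}'(1)$ terms cancel and I get the classical perturbation identity
\[
\xi'(1)=\hat{\ul v}^{\top}\bm{M}'(1)\hat{\ul w}.
\]
The product rule applied to $\bm{M}=\bm{M}_1\cdots\bm{M}_N$ gives $\bm{M}'(1)=\sum_{k=1}^{N}\hat{\bm M}_1\cdots\hat{\bm M}_{k-1}\,\bm{M}_k'(1)\,\hat{\bm M}_{k+1}\cdots\hat{\bm M}_N$. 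Only the $k$-th row of $\bm{M}_k$ depends on $\rho$, and it does so linearly through $\lambda_j=\hat\lambda_j\rho$, so $\bm{M}_k'(1)=b_k\,\bm{e}_k\hat{\ul \lambda}^{\top}$ with $\bm{e}_k$ the $k$-th standard basis vector. Using $\hat{\bm M}_j\hat{\ul w}=\hat{\ul w}$ (established inside Lemma~\ref{eigenvectorslemma}) and the workload balance $\sum_j\hat\lambda_j\tilde b_j=1$ at $\rho=1$ (which is $\sum_j\lambda_j\tilde b_j=\rho$ in general), I obtain $\bm{M}_k'(1)\hat{\ul w}=b_k\bm{e}_k/|\ul{\tilde b}|$ and hence
\[
\xi'(1)=\frac{1}{|\ul{\tilde b}|}\sum_{k=1}^{N}b_k\,\hat{\ul v}^{\top}\hat{\bm M}_1\cdots\hat{\bm M}_{k-1}\bm{e}_k.
\]

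The remaining task, and the genuine obstacle, is to evaluate this sum and identify it with $|\ul{\tilde b}|/\delta$ so that the prefactor $1/|\ul{\tilde b}|$ cancels and $\xi'(1)=1/\delta$. Here $\hat{\ul v}$ is a left eigenvector of the full product $\hat{\bm M}$, not of the individual factors, so the partially-evolved row vector $\hat{\ul v}_k^{\top}:=\hat{\ul v}^{\top}\hat{\bm M}_1\cdots\hat{\bm M}_{k-1}$ must be tracked explicitly. I plan to use the simple structure of each $\hat{\bm M}_k$ (identity except in row $k$, whose entries are $\hat\lambda_j b_k+p_{k,j}=b_k\hat\gamma_{j,k}$) to derive a recursion for $\hat{\ul v}_k$, giving a closed expression whose $k$-th entry mirrors the form of $\hat u_j$ in (\ref{eq44}) restricted to the queues $k,k+1,\dots,N$. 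Substituting into the sum and recognising the double sum that appears in the alternative form (\ref{eq45}) of $\delta$ will yield $\hat{\ul v}^{\top}\bm{M}'(1)\hat{\ul w}=1/\delta$. This bookkeeping step is where care is required, but the calculation is mechanical once the recursion for $\hat{\ul v}_k$ is in hand; in particular it parallels the derivation in \cite{RvdM_QUESTA} for the no-routing case and reduces to it when all $p_{i,j}=0$.
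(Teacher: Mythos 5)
Your overall route is sound and in fact more explicit than the paper's, which for this lemma offers no argument beyond a pointer to Section 3.2 of \cite{RvdM_QUESTA} for the no-routing case. Parts (1)--(3) are essentially fine, though your justification of (1) is mildly circular: you match the MTBP ergodicity criterion against the network stability condition of \cite{sidi2}, whereas that stability condition is itself obtained from $\xi<1$. A cleaner self-contained argument uses Lemma \ref{eigenvectorslemma}: at $\rho=1$ the strictly positive vector $\hat{\ul w}$ satisfies $\hat{\bm M}\hat{\ul w}=\hat{\ul w}$, so by Perron--Frobenius $\xi(1)=1$; since every entry of ${\bm M}(\rho)$ is nondecreasing and some are strictly increasing in $\rho$, $\xi(\rho)$ is strictly increasing, which gives (1), and (2)--(3) follow from simplicity of the Perron root.

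For (4), the perturbation identity $\xi^{\prime}(1)=\hat{\ul v}^{\top}{\bm M}^{\prime}(1)\hat{\ul w}$, the product-rule expansion, the computation ${\bm M}_k^{\prime}(1)=b_k\,{\bm e}_k\hat{\ul\lambda}^{\top}$, and the reduction (via $\hat{\bm M}_j\hat{\ul w}=\hat{\ul w}$ and $\sum_j\hat\lambda_j\tilde b_j=1$) to
\[
\xi^{\prime}(1)=\frac{1}{\vert\ul{\tilde b}\vert}\sum_{k=1}^{N}b_k\bigl(\hat{\ul v}^{\top}\hat{\bm M}_1\cdots\hat{\bm M}_{k-1}\bigr)_k
\]
are all correct. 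The genuine gap is that the closing identity $\sum_k b_k(\hat{\ul v}^{\top}\hat{\bm M}_1\cdots\hat{\bm M}_{k-1})_k=\vert\ul{\tilde b}\vert/\delta$ --- equivalently $\sum_k b_k(\hat{\ul u}^{\top}\hat{\bm M}_1\cdots\hat{\bm M}_{k-1})_k=1$ with $\hat{\ul u}$ as in (\ref{eq44}) --- is only announced, not proved, and this is precisely where $\delta$ and the routing probabilities enter; without it part (4) has no quantitative content. The identity is true and your recursion $(\hat{\ul v}_{k+1})_j=(\hat{\ul v}_k)_j I_{\{j\neq k\}}+(\hat{\ul v}_k)_k(\hat\lambda_j b_k+p_{k,j})$ is the right tool (for $N=2$ without routing one finds $(\hat{\ul u}^{\top}\hat{\bm M}_1)_2=\hat\lambda_2$, the sum is $\hat\rho_1+\hat\rho_2=1$, and direct differentiation of the characteristic polynomial confirms $\xi^{\prime}(1)=1/(1-\hat\rho_1\hat\rho_2)=1/\delta$), but the computation must actually be carried out. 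One warning for when you do: use $\delta=\hat{\ul u}^{\top}\ul{\tilde b}$ as the definition; the expanded double sum displayed in (\ref{eq45}) omits the diagonal terms $\sum_i\hat\rho_i^2$ and does not equal $\hat{\ul u}^{\top}\ul{\tilde b}$ even without routing, so matching your answer against that form would fail.
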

\begin{proof}
The proof can be obtained by following the approach in \cite{RvdM_QUESTA} (Section 3.2), which proves for the
case without customer routing (i.e., $p_{i,j}=0$ for all $i, j$).
\end{proof}

\paragraph{Step 7: The joint queue-length vector at visit beginnings and completions.}
We are now ready to present the HT result for the state vector
at polling instants. Without loss of generality, we focus on the evolution of
the state vector at embedded polling instants at $Q_1$.

\begin{theorem}
The joint queue-length vector at polling instants at $Q_1$ has the following asymptotic behaviour:
\beq
\label{eq72}
(1-\rho)
\left(
\begin{array}{c}
X_1 \\ \vdots \\ X_N
\end{array}
\right)
\limdist
{\tilde{b}^{\textit{res}}}
~
{1 \over \delta}
\left(
\begin{array}{c}
\hat{u}_1 \\ \vdots \\ \hat{u}_N
\end{array}
\right)
~
\Gamma(\alpha, 1)~~~(\rho \uparrow 1),
\eeq
where
\beq
\label{eq73}
\alpha = r \delta /\tilde{b}^{\textit{res}},
\eeq
and where $\hat{u}_i~(\ii)$ and $\delta$ are defined in (\ref{eq44}) and (\ref{eq45}).
\end{theorem}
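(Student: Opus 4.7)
The plan is to combine the ingredients prepared in Steps 1--6 with Quine's limit theorem for critical MTBPs (Property~\ref{htbranchingprop}) to obtain the HT asymptotics for the stationary state vector $\ul X$, and then translate the $(1-\xi)$ scaling into the $(1-\rho)$ scaling using Lemma~6. Concretely, the ``intrinsic'' scaling of the branching process is governed by the maximum eigenvalue $\xi=\xi(\rho)$, and Property~\ref{htbranchingprop} gives the generation-indexed limit $\pi_n(\xi)^{-1}\ul Z_n\limdist A\,\hat{\ul v}\,\Gamma(\alpha,1)$. Since for $\xi<1$ one has $\pi_n(\xi)\rightarrow(1-\xi)^{-1}$ as $n\to\infty$, and since $\{\ul X_n\}$ is an ergodic MTBP with immigration whose stationary law is the weak limit of $\ul X_n$, one expects the stationary scaled vector to satisfy $(1-\xi)\ul X\limdist A\,\hat{\ul v}\,\Gamma(\alpha,1)$ as $\xi\uparrow 1$. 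This transfer from the generation limit to the stationary limit is the non-trivial step, and exactly this has been carried out for branching-type polling systems in \cite{RvdM_QUESTA}; the argument goes through verbatim here because our offspring and immigration functions, given in Theorem~\ref{thm:offspringimmigration}, are of branching type and satisfy the moment and continuity assumptions required by Quine's theorem.

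Granting that stationary HT limit, the rest is direct substitution. First, from Lemma~\ref{eigenvectorslemma} I replace $\hat{\ul v}$ by $(\vert\tilde{\ul b}\vert/\delta)\hat{\ul u}$. From Lemma~\ref{scalingparameterlemma} I use $A=\vert\tilde{\ul b}\vert^{-1}\delta^{-1}\bres$, yielding
\[
(1-\xi)\ul X \limdist \frac{\bres}{\delta^{2}}\,\hat{\ul u}\,\Gamma(\alpha,1),\qquad \xi\uparrow 1.
\]
Second, Lemma~6(4) gives $\lim_{\rho\uparrow 1}(1-\xi(\rho))/(1-\rho)=1/\delta$, so multiplying both sides by $\delta$ I obtain
\[
(1-\rho)\ul X \limdist \frac{\bres}{\delta}\,\hat{\ul u}\,\Gamma(\alpha,1),\qquad \rho\uparrow 1,
\]
which is the claimed vector limit \eqref{eq72}. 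Finally, the shape parameter is computed from (\ref{A}): using Lemma~\ref{meanimmigrationlemma} ($\hat{\ul g}^\top\hat{\ul w}=\vert\tilde{\ul b}\vert^{-1}r$) and $A$ from Lemma~\ref{scalingparameterlemma},
\[
\alpha=\frac{\hat{\ul g}^\top\hat{\ul w}}{A}=\frac{\vert\tilde{\ul b}\vert^{-1}r}{\vert\tilde{\ul b}\vert^{-1}\delta^{-1}\bres}=\frac{r\delta}{\bres},
\]
matching (\ref{eq73}).

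The main obstacle is therefore not computational but conceptual: justifying that Quine's asymptotic, which is a statement about the $n$-th generation of the MTBP, can be applied to the \emph{stationary} state vector $\ul X$ as the subcriticality parameter $\xi$ approaches one. In polling-type settings this is handled by controlling the drift of the branching process uniformly in $\xi$ near $1$, establishing tightness of $(1-\xi)\ul X$, and identifying every subsequential limit via the generation result together with a coupling of the stationary distribution and a generation far enough out; this is precisely the roadmap of \cite{RvdM_QUESTA}, which transfers to the roving-server setting because the branching structure is preserved under customer routing (only the offspring function changes, cf.\ the proof of Theorem~\ref{thm:offspringimmigration}). Once this transfer is granted, the remainder of the proof is a clean assembly of Lemmas~\ref{meanoffspringlemma}--\ref{scalingparameterlemma} and the eigenvalue expansion in Lemma~6, as sketched above.
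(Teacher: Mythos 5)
Your proposal is correct and follows essentially the same route as the paper: apply Quine's theorem (Property~\ref{htbranchingprop}) to the embedded MTBP, substitute the expressions for $A$, $\hat{\ul v}$, and $\hat{\ul g}^{\top}\hat{\ul w}$ from Lemmas~\ref{eigenvectorslemma}--\ref{scalingparameterlemma}, and convert the $(1-\xi)$ scaling to $(1-\rho)$ via Lemma~6(4). The only difference is cosmetic: you make explicit the passage from the generation-indexed limit to the stationary state vector (deferring it to \cite{RvdM_QUESTA}), whereas the paper leaves this implicit in its joint limit $(\rho,n)\rightarrow(1,\infty)$; the algebra and the value of $\alpha$ agree with the paper's computation.
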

\begin{proof}
First, note that the process that describes the evolution of the state vector
$\{{\ul X}_n, n=0,1,\ldots\}$ at successive polling instants
at $Q_1$ constitutes an $N$-dimensional MTBP with
offspring function $f({\ul s})$ and immigration function $g({\ul z})$ defined in Theorem \ref{thm:offspringimmigration}, and with mean matrix ${\bm M}$
defined in Lemma \ref{meanoffspringlemma}. Moreover, from Theorem \ref{thm:offspringimmigration} it is readily verified that the assumptions of Property 1 on the finiteness
of the second-order derivatives of $f({\ul z})$ and the mean immigration function ${\ul g}$ are satisfied.
Then using Property \ref{htbranchingprop} it follows that
\beq
\label{eq58}
{1 \over \pi_n(\xi)}\cdot
{\ul X}_n^{\top}
\limdist
A \cdot
%{b^{(2)} \over b^{(1)}}
\hat{\ul v} \cdot \Gamma(\alpha, 1)~~{\it as}~~(\xi, n) \rightarrow (1, \infty),
\eeq
where $A$, $\hat{\ul v}$ and $\alpha$ are given in (\ref{A}).
Hence, translating this to the polling model and using Lemmas 1-5, it readily follows from (\ref{eq58}) that
\beq
\label{eq59}
(1-\rho) {\ul X}_n^{\top}
\limdist \delta \cdot A \cdot
\hat{\ul v} \cdot \Gamma(\alpha, 1)~~{\it as}~~(\rho, n) \rightarrow (1, \infty),
\eeq
where expressions for $\delta$, $A$, $\hat{\ul v}$ and $\alpha$ are given in (\ref{eq45}), (\ref{eq51}), (\ref{eq44}) and (\ref{eq73}). Combining these expressions leads to the result.
\end{proof}

\begin{corollary}
Let $X_{i,k}^\textit{scaled}:=\lim_{\rho\uparrow1}(1-\rho)X_{i,k}$ denote the scaled number of customers in $Q_i$ at the beginning of a visit to $Q_k$. Its LST is equal to
\begin{equation}
\label{XikScaled}
\E\left[e^{-\omega X_{i,k}^\textit{scaled}}\right] = \begin{cases}
\left( \frac{\delta/\bres}{\delta/\bres+\omega\sum_{j=i}^{k-1}\hat\rho_j\hat\gamma_{i,j}}\right)^{r\delta/\bres}
\qquad&k=i+1,\dots,i+N-1,\\[2ex]
\left( \frac{\delta/\bres}{\delta/\bres+\omega\hat\gamma_{i}}\right)^{r\delta/\bres}
\qquad&k=i.
\end{cases}
\end{equation}
\end{corollary}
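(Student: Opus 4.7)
The plan is to leverage the theorem immediately preceding the corollary — which gives the rank-1 HT limit of the joint queue-length vector at polling instants at $Q_1$ — together with cyclic symmetry of the network, and then to perform the LST computation for a Gamma distribution.

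First, I would observe that the MTBP analysis of Steps 1–7 is not specific to $Q_1$: taking embedded polling instants at $Q_k$ instead, the joint queue-length process $\{\ul X^{(k)}_n\}$ is again an $N$-dimensional MTBP with immigration. Its mean offspring matrix is $\hat{\bm M}^{(k)} = \hat{\bm M}_k \hat{\bm M}_{k+1} \cdots \hat{\bm M}_{k-1}$, a cyclic permutation of the product in \eqref{eq33}, hence shares its spectrum. The maximum eigenvalue is therefore still $1$ at $\rho=1$, and since the key identity $\hat{\bm M}_j \ul{\tilde b} = \ul{\tilde b}$ established in \eqref{eq46} holds for every $j$, the right eigenvector is unchanged, namely $\hat{\ul w} = |\ul{\tilde b}|^{-1}\ul{\tilde b}$. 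The shape parameter $\alpha = r\delta/\bres$ is intrinsic to the system (it involves only $r$, $\delta$, and $\bres$, which are defined as sums over all queues) and is therefore unchanged.

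Second, I would identify the new left eigenvector by repeating the calculation of Lemma \ref{eigenvectorslemma} with cyclic order starting at $Q_k$; the natural analogue of \eqref{eq44} is
\[
\hat{u}_i^{(k)} = \hat\lambda_i \sum_{j=i}^{k-1} \hat\rho_j + \sum_{j=i}^{k-1} \hat\gamma_j p_{j,i},
\]
where sums are cyclic. Substituting $\hat\gamma_{i,j} = \hat\lambda_i + p_{j,i}/b_j$ and $\hat\rho_j/b_j = \hat\gamma_j$ and comparing with \eqref{fluidworkatvisitbeginnings}, one finds $\hat{u}_i^{(k)} = \sum_{j=i}^{k-1} \hat\rho_j \hat\gamma_{i,j} = X_{i,k}^{\textit{fluid}}/c$ for $k = i+1,\dots,i+N-1$, and $\hat{u}_i^{(i)} = \hat\gamma_i = X_{i,i}^{\textit{fluid}}/c$. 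This makes physical sense: the HT limit at polling-$Q_k$ instants must be consistent with the fluid evolution through the cycle.

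Third, the theorem (applied with $Q_k$ as the starting queue) yields
\[
(1-\rho) X_{i,k} \limdist \frac{\bres}{\delta}\, \hat{u}_i^{(k)} \cdot \Gamma(\alpha,1),
\qquad \rho \uparrow 1,
\]
which is a Gamma random variable with shape $\alpha = r\delta/\bres$ and rate parameter $(\delta/\bres)/\hat{u}_i^{(k)}$. Using that the LST of a Gamma$(a,b)$ random variable is $(b/(b+\omega))^{a}$, a direct substitution gives
\[
\E\bigl[e^{-\omega X_{i,k}^{\textit{scaled}}}\bigr] = \left(\frac{\delta/\bres}{\delta/\bres + \omega\,\hat{u}_i^{(k)}}\right)^{r\delta/\bres},
\]
which matches \eqref{XikScaled} once $\hat{u}_i^{(k)}$ is replaced by the fluid expression.

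The main obstacle is the rigorous extension from $Q_1$-polling to $Q_k$-polling (the first and second steps). Although cyclic symmetry strongly suggests it, one must verify carefully that the shape parameter is preserved (the invariance of $\delta$, $r$, $\bres$ under cyclic relabeling is the key point) and that the new left eigenvector indeed matches $X_{i,k}^{\textit{fluid}}/c$. An alternative route that bypasses recomputing the eigenvector is to exploit the fact that, in the HT limit, the joint state at $Q_1$-polling is rank-one ($\propto \hat{\ul u}^{(1)} \Gamma(\alpha,1)$); the intra-cycle fluid evolution is a deterministic linear transformation that carries the state at polling-$Q_1$ to the state at polling-$Q_k$, and by the HTAP it sends $\hat{\ul u}^{(1)}$ to $\hat{\ul u}^{(k)} = X_{\cdot,k}^{\textit{fluid}}/c$ while leaving the Gamma factor untouched — this is perhaps the conceptually cleanest justification. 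The LST computation itself is then purely mechanical.
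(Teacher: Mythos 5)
Your proposal is correct and follows essentially the same route the paper intends: the corollary is an immediate consequence of the Step-7 theorem (stated ``without loss of generality'' for polling instants at $Q_1$) applied at visit beginnings of $Q_k$, followed by the standard Gamma LST computation. Your explicit verification that the relabeled left eigenvector satisfies $\hat{u}_i^{(k)}=\sum_{j=i}^{k-1}\hat\rho_j\hat\gamma_{i,j}=X_{i,k}^{\textit{fluid}}/c$ and that $r$, $\delta$, $\bres$ are invariant under cyclic relabeling supplies exactly the details the paper leaves implicit.
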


\paragraph{Step 8: The marginal queue-length distribution.} Let $L_{i,k}^\textit{scaled}:=\lim_{\rho\uparrow1}(1-\rho)L_{i,k}$ denote the scaled number of customers in $Q_i$ at an arbitrary moment during a visit to $Q_k$, and let $L_{i}^\textit{scaled}:=\lim_{\rho\uparrow1}(1-\rho)L_{i}$ denote the scaled number of customers in $Q_i$ at an arbitrary moment.
\begin{theorem}
\label{scaledqueuelengththeorem}
\begin{equation}
\E\left[e^{-\omega L_{i}^\textit{scaled}}\right] = \sum_{k=1}^N \hat\rho_k \E\left[e^{-\omega L_{i,k}^\textit{scaled}}\right],
\label{scaledLi}
\end{equation}
where
\begin{align}
\E\left[e^{-\omega L_{i,i}^\textit{scaled}}\right] &=
\frac{\left( \frac{\delta/\bres}{\delta/\bres+\omega\hat\rho_i\hat\gamma_{i,i}}\right)^{r\delta/\bres}-
\left( \frac{\delta/\bres}{\delta/\bres+\omega\hat\gamma_{i}}\right)^{r\delta/\bres}}
{(\hat\gamma_i-\hat\gamma_{i,i}\hat\rho_i)r\omega},
\label{scaledLii}\\
\E\left[e^{-\omega L_{i,k}^\textit{scaled}}\right] &=
\frac{\left( \frac{\delta/\bres}{\delta/\bres+\omega\sum_{j=i}^{k-1}\hat\rho_j\hat\gamma_{i,j}}\right)^{r\delta/\bres}-
\left( \frac{\delta/\bres}{\delta/\bres+\omega\sum_{j=i}^{k}\hat\rho_j\hat\gamma_{i,j}}\right)^{r\delta/\bres}}
{\hat\rho_k\hat\gamma_{i,k}r\omega},\qquad i\neq k.
\label{scaledLik}
\end{align}
\end{theorem}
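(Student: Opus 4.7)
The plan is to assemble the marginal queue-length distribution from the visit-beginning law established in Step 7, together with the fluid trajectory derived earlier in the paper, and then to compute one Laplace transform of the product of a Gamma and a uniform random variable. In steady state an arbitrary time epoch lies in visit period $V_k$ with probability $\E[V_k]/\E[C]$, which converges to $\hat\rho_k$ as $\rho\uparrow1$. Conditioning on this event immediately yields
\[
\E\bigl[e^{-\omega L_{i}^\textit{scaled}}\bigr]=\sum_{k=1}^{N}\hat\rho_k\,\E\bigl[e^{-\omega L_{i,k}^\textit{scaled}}\bigr],
\]
which is \eqref{scaledLi}; it remains to compute each conditional transform.

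For the conditional law I invoke the HTAP-style argument of Conjecture \ref{conjecturescaledqueuelengthHT}. Given that the server is in $V_k$, a uniformly sampled instant within $V_k$ sees a queue length that is, to leading order, the \emph{length-biased} scaled cycle $\Gamma$ times the standardised fluid particle count $\mathcal{L}_{i,k}^{\textit{fluid}}$. For Poisson arrivals this is made rigorous by combining the Step~7 result for $X_{i,k}$ with the observation that type-$i$ arrivals into $Q_i$ during $V_k$ (external plus those routed from $Q_k$) form a Poisson process of rate $\hat\gamma_{i,k}$ while $V_k=\O((1-\rho)^{-1})$; hence the fluctuations of $L_{i,k}(\cdot)$ around the linear fluid trajectory are $\O((1-\rho)^{-1/2})$ and vanish after $(1-\rho)$-scaling. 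Length-biasing turns $\mathrm{Gamma}(\alpha,\delta\mu)$ into $\mathrm{Gamma}(\alpha+1,\delta\mu)$, so
\[
(1-\rho)L_{i,k}\limdist\Gamma\cdot\mathcal{L}_{i,k}^{\textit{fluid}},
\]
with $\Gamma$ and $\mathcal{L}_{i,k}^{\textit{fluid}}$ independent, the latter uniform on the interval given by \eqref{LikDist} divided by $c$.

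For the final transform let $U$ be uniform on $[a,b]$ independent of $\Gamma$. To avoid the singular $1/\Gamma$ factor I use the identity $(e^{-sa}-e^{-sb})/(s(b-a))=(b-a)^{-1}\int_a^b e^{-st}\,dt$, interchange expectation and Riemann integral, and plug in the Gamma transform:
\[
\E[e^{-\omega\Gamma U}]=\frac{1}{b-a}\int_a^b\Bigl(\frac{\delta\mu}{\delta\mu+\omega t}\Bigr)^{\alpha+1}dt=\frac{\delta\mu}{\alpha\omega(b-a)}\left[\Bigl(\frac{\delta\mu}{\delta\mu+\omega a}\Bigr)^{\alpha}-\Bigl(\frac{\delta\mu}{\delta\mu+\omega b}\Bigr)^{\alpha}\right].
\]
Using $\alpha=r\delta/\bres$ and $\delta\mu=\delta/\bres$, the prefactor $\delta\mu/\alpha$ collapses to $1/r$; inserting the two support intervals from \eqref{LikDist} --- namely $[\hat\rho_i\hat\gamma_{i,i},\hat\gamma_i]$ when $k=i$ and $[\sum_{j=i}^{k-1}\hat\rho_j\hat\gamma_{i,j},\sum_{j=i}^{k}\hat\rho_j\hat\gamma_{i,j}]$ when $k\neq i$ --- recovers \eqref{scaledLii} and \eqref{scaledLik}, respectively.

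The main obstacle will be the rigorous lift from the Step 7 law at visit beginnings to a law at an arbitrary time epoch: one must show, in the HT limit, that the elapsed fraction of the ongoing visit is asymptotically uniform on $[0,1]$ and independent of the length-biased cycle, while the Poisson-driven noise around the linear fluid trajectory becomes negligible after the $(1-\rho)$-scaling. The uniformity is the classical length-biased-sampling/inspection-paradox statement; the vanishing noise is where Poisson arrivals reduce matters to a functional law of large numbers; and propagating independence between $\Gamma$ and $\mathcal{L}_{i,k}^{\textit{fluid}}$ through the limit is the delicate part.
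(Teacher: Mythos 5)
Your outer decomposition and your closing computation are both fine: the mixture over visit periods with weights $\hat\rho_k$ (switch-over contributions carrying a factor $1-\rho$ and vanishing) is exactly how \eqref{scaledLi} arises, and your evaluation of $\E[e^{-\omega\Gamma U}]$ via $\frac{1}{b-a}\int_a^b(\delta\mu/(\delta\mu+\omega t))^{\alpha+1}\,dt$ is precisely the Gamma-times-uniform identity the paper uses in its Step 9, with the prefactor correctly collapsing to $1/r$. The problem is the middle step. Your derivation of the conditional laws $\E[e^{-\omega L_{i,k}^{\textit{scaled}}}]$ consists of invoking "the HTAP-style argument of Conjecture \ref{conjecturescaledqueuelengthHT}" and asserting that fluctuations around the fluid trajectory are $\O((1-\rho)^{-1/2})$ and that the elapsed visit fraction is asymptotically uniform and independent of the length-biased cycle. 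But the whole point of this appendix is to \emph{prove} the queue-length limit for Poisson arrivals rather than assume the HTAP; as you yourself concede in your final paragraph, the lift from the Step 7 law at visit beginnings to an arbitrary epoch, and the propagation of independence, are exactly the delicate parts — and you leave them unproved. A subsidiary error: the routed arrivals into $Q_i$ during $V_k$ occur at the service-completion epochs of $Q_k$, so the combined input stream is \emph{not} a Poisson process of rate $\hat\gamma_{i,k}$; only its asymptotic rate is as claimed.

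The paper closes this gap in a way you did not anticipate: it avoids any probabilistic coupling argument entirely by starting from the \emph{exact} pre-limit generating-function identities of Sidi et al.\ (Equations \eqref{Li}--\eqref{Yik}), which express $\E[z^{L_i^{(V_k)}}]$ in closed form in terms of the visit-beginning and visit-completion PGFs $X_{i,k}^*(z)$ and $Y_{i,k}^*(z)$. Substituting $z=e^{-\omega(1-\rho)}$, using the Step 7 limit \eqref{XikScaled} for $X_{i,k}^{\textit{scaled}}$ (with $Y_{i,k}^*$ tied to $X_{i,k+1}^*$ through \eqref{Yik}), and Taylor-expanding near $\rho=1$ then delivers \eqref{scaledLii} and \eqref{scaledLik} by pure analysis; the uniform random variable and its independence from $\Gamma$ emerge from the algebra rather than having to be established probabilistically. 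To repair your proof you would either need to import these exact identities, or supply a genuine functional-LLN/joint-convergence argument for the within-cycle dynamics — which is a substantial piece of work, not a remark.
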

\begin{proof}
To prove Theorem \ref{scaledqueuelengththeorem} we need the following results, summarised in \eqref{Li}--\eqref{Yik}, which directly follow from Equations (3.6)--(3.10) in \cite{sidi2}.

\begin{equation}
\E[z^{L_i}]= \sum_{k=1}^N\left(\hat\rho_k\E[z^{L_i^{(V_k)}}]+(1-\rho)\frac{r_j}{r}\E[z^{L_i^{(R_k)}}]\right),\label{Li}
\end{equation}
where $L_i^{(V_k)}$ and $L_i^{(R_k)}$ denote the number of customers in $Q_i$ at an arbitrary moment during $V_k$ and $R_k$ respectively.

Denote by $X_{i,k}^*(z)$ and $Y_{i,k}^*(z)$ the PGF of the number of customers in $Q_i$ at the beginning and end of $V_k$, respectively.
\begin{align}
L_i^{(V_i)}(z) &= \big(X_{i,i}^*(z) - Y_{i,i}^*(z)\big)\frac{(1-\rho)(1-B^*_i\big(\lambda_i(1-z)\big))}{\lambda_i(1-z)\rho_i r\big(1 - B^*_i(\lambda_i(1-z))(1-p_{i,i}+p_{i,i}z)/z)} \label{LiVi}\\
L_i^{(V_k)}(z) &= \big(X_{i,k}^*(z) - Y_{i,k}^*(z)\big)\frac{(1-\rho)(1-B^*_k\big(\lambda_i(1-z)\big))}{\lambda_i(1-z)\rho_k r\big(1 - B^*_k(\lambda_i(1-z))(1-p_{k,i}+p_{k,i}z))} ,\qquad \text{ for }k\neq i.\label{LiVk}
\end{align}
Note that
\begin{equation}
Y_{i,k}^*(z) = \frac{X_{i,k+1}^*(z)}{R_k^*(\lambda_i(1-z))}.
\label{Yik}
\end{equation}
Our goal is to find the limiting distribution of $(1-\rho)L_i$ as $\rho\uparrow 1$. In the limit, when substituting $z=e^{ -\omega(1-\rho)}$ in \eqref{Li} and letting $\rho\uparrow 1$, the terms that correspond to the queue lengths during switch-over times vanish, caused by the $(1-\rho)$ term. Intuitively this is exactly what one would expect, as switch-over times become negligible in heavy traffic. In order to prove \eqref{scaledLii} and \eqref{scaledLik}, we substitute $z=e^{ -\omega(1-\rho)}$, $\lambda_i=\hat\lambda_i\rho$, and $\rho_i=\hat\rho_i\rho$ in \eqref{LiVi} and  \eqref{LiVk} respectively, and evaluate the Taylor series of the resulting functions near $\rho=1$.

For the case $k=i$ this results in
\begin{equation}
\lim_{\rho\uparrow1}\E[e^{-\omega(1-\rho)L_i^{(V_i)}}]= \frac{\E\left[e^{-\omega X_{i,i+1}^\textit{scaled}}\right] - \E\left[e^{-\omega X_{i,i}^\textit{scaled}}\right]}{\omega r \hat\rho_i(1-\hat\lambda_ib_i-p_{i,i})/b_i},
\end{equation}
and for the case $k\neq i$ we obtain
\begin{equation}
\lim_{\rho\uparrow1}\E[e^{-\omega(1-\rho)L_i^{(V_k)}}]= \frac{\E\left[e^{-\omega X_{i,k}^\textit{scaled}}\right] - \E\left[e^{-\omega X_{i,k+1}^\textit{scaled}}\right]}{\omega r \hat\rho_k(\hat\lambda_ib_i+p_{k,i})/b_k}.
\end{equation}
After substitution of \eqref{XikScaled} this leads to the following result:
\begin{align}
\lim_{\rho\uparrow1}\E[e^{-\omega(1-\rho)L_i^{(V_i)}}]&=
\frac{\left( \frac{\delta/\bres}{\delta/\bres+\hat\rho_i\hat\gamma_{i,i}\omega}\right)^{r\delta/\bres} - \left( \frac{\delta/\bres}{\delta/\bres+\hat\gamma_{i}\omega}\right)^{r\delta/\bres}}{ r (\hat\gamma_i-\hat\rho_i\hat\gamma_{i,i})\omega},
\label{LiViscaled1}\\
\lim_{\rho\uparrow1}\E[e^{-\omega(1-\rho)L_i^{(V_k)}}]&=
\frac{\left( \frac{\delta/\bres}{\delta/\bres+\omega\sum_{j=i-N}^{k-1}\hat\rho_j\hat\gamma_{i,j}}\right)^{r\delta/\bres} - \left( \frac{\delta/\bres}{\delta/\bres+\omega\sum_{j=i-N}^{k}\hat\rho_j\hat\gamma_{i,j}}\right)^{r\delta/\bres}}{ r \hat\rho_k\hat\gamma_{i,k}\omega}.
\label{LiViscaled2}
\end{align}

\paragraph{Step 9: Proving Theorem~\ref{theoremscaledqueuelengthHTpoisson}.}
To prove that Theorem~\ref{theoremscaledqueuelengthHTpoisson} agrees with the results obtained in Step 8, we need the following, well-known property that is frequently used in HT limits of polling systems.
\begin{property}
Let $\Gamma$ be a random variable with a Gamma distribution with shape parameter $\alpha^*$ (to avoid confusion since we already have defined a parameter $\alpha$) and rate parameter $\beta$. Let $U$ be a uniform random variable on the interval $[a, b]$, independent of $\Gamma$. The LST of the product $U\times I$ is equal to
\begin{equation}
\E[e^{-\omega U I}] =
\frac{\left(\frac{\beta/a }{\beta/a+\omega }\right)^{\alpha^* -1}-\left(\frac{\beta/b }{\beta /b+\omega }\right)^{\alpha^* -1}}{(b-a)(\alpha^* -1)   \omega/\beta }.
\label{gammauniform}
\end{equation}
\end{property}
Theorem~\ref{theoremscaledqueuelengthHTpoisson} states that the scaled queue length $L_i^\textit{scaled}$ is distributed as $L_{i,k}^\textit{scaled}$ with probability $\hat\rho_k$. This is in accordance with what one would obtain when substituting $z=e^{ -\omega(1-\rho)}$ in \eqref{Li} and letting $\rho\uparrow 1$.

The distribution of $L_{i,k}^\textit{scaled}$ is the product of two random variables. The first is a Gamma($\alpha+1$, $\delta\mu$) distribution,
where $\alpha+1$ and $\delta\mu$ are the parameters of the Gamma distribution of the scaled, length-biased cycle time, discussed in Theorem~\ref{theoremscaledqueuelengthHTpoisson}. When the external arrival process is a Poisson process, the parameter $\sigma^2$ is equal to $\btilde^{(2)}/\btilde$, which means that
\begin{equation}
\alpha = r\delta/\bres, \mu=\bres,
\label{alphamupoisson}
\end{equation}
with $\delta$ as defined in Definition \ref{deltalemma}.

The second random variable in the product is $\mathcal{L}^\textit{fluid}_{i,k}$, which is uniformly distributed. Remember that $\mathcal{L}^\textit{fluid}_{i,k} = L^\textit{fluid}_{i,k}/c$, so taking $c=1$ in \eqref{LikDist} gives the parameters of the uniform distribution for each of the $\mathcal{L}^\textit{fluid}_{i,k}$. Substituting the following values in \eqref{gammauniform} leads to the LST of the limiting distribution of the scaled number of customers in $Q_i$ at an arbitrary epoch during $V_k$,
\begin{align*}
\alpha^*&=r\delta/\bres+1, &&\beta=\delta\bres, &&\\
a &= \hat\rho_j \hat\gamma_{i,j}, &&b = \hat\gamma_{i}, &&\qquad (i=k)\\
a &= \sum_{j=i}^{k-1} \hat\rho_j \hat\gamma_{i,j}, &&b = \sum_{j=i}^{k} \hat\rho_j \hat\gamma_{i,j}, &&\qquad (i\neq k)
\end{align*}
It is quickly verified that these substitutions result in an expression completely equivalent to \eqref{LiViscaled1} for the case $i=k$, and to \eqref{LiViscaled2} for $i\neq k$. This concludes the proof of Theorem~\ref{theoremscaledqueuelengthHTpoisson}.
\end{proof}

\subsection*{Discussion}

In this appendix we have provided a proof for the scaled queue-length distributions in heavy traffic, relying on the framework developed in \cite{RvdM_QUESTA}. In his paper, Van der Mei uses the distributional form
of Little's Law to obtain the distributions of the scaled waiting times in polling models with Poisson arrivals. We note that the distributional form of Little's Law cannot be applied to our model because of the internal routing, as discussed extensively in~\cite{boonvdmeiwinandsRovingQuesta}. In \cite{boonvdmeiwinandsRovingQuesta} nevertheless a mathematical framework has been developed to derive the LSTs of the steady-state waiting-time distributions. As such, this framework provides an excellent basis to prove the HT limits for waiting times without resorting to the distributional form of Little's Law. The derived LSTs for steady-state waiting-time distributions are given in the form of recursive expressions, which will simplify to the elegant closed-form expressions presented in this paper, after taking the HT limit.

For \emph{path times}, there are currently no steady-state results available at all, due to the complex dependencies between successive visit times. In order to prove the HT limit of the scaled path times, one would first have to apply the techniques in~\cite{boonvdmeiwinandsRovingQuesta} to find path-time LSTs in steady state, which is a separate study by itself.

We conclude this discussion by noting that the \emph{mean} waiting times can easily be obtained by applying Little's Law, which does not require the assumption of Poisson arrivals.

\end{document}